\title{Piecewise Visual, Linearly Connected  Metrics on Boundaries of Relatively Hyperbolic Groups} 
\author{M. Haulmark and M. Mihalik}
\newtheorem{theorem}{Theorem}[section]
\newtheorem{definition}[theorem]{Definition}
\newtheorem{proposition}[theorem]{Proposition}
\newtheorem{lemma}[theorem]{Lemma}
\newtheorem{claim}[theorem]{Claim}
\newtheorem{corollary}[theorem]{Corollary}
\newtheorem{remark}[theorem]{Remark}
\newtheorem{example}[theorem]{Example}
\newtheorem{question}[theorem]{Question}
\newenvironment{proof}{\addvspace{12pt}\noindent{\bf Proof:}}{
$\Box$\par\addvspace{12pt}}
\date{\today}
\begin{document}
\maketitle

\begin{abstract} 
Suppose a finitely generated group $G$ is hyperbolic relative to $\mathcal P$ a set of proper finitely generated subgroups of $G$. Established results in the literature imply that a ``visual" metric on $\partial (G,\mathcal P)$ is ``linearly connected" if and only if the boundary $\partial (G,\mathcal P)$ has no cut point. Our goal is to produce linearly connected metrics  on $\partial (G,\mathcal P)$ that are ``piecewise" visual when $\partial (G,\mathcal P)$ contains cut points. 

Our main theorem is about graph of groups decompositions of relatively hyperbolic groups $(G,\mathcal P)$, and piecewise visual metrics on their boundaries.  We assume that each vertex group of our decomposition has a boundary with linearly connected visual metric or the vertex group is in $\mathcal P$. If a vertex group is not in $\mathcal P$, then it is hyperbolic relative to its adjacent edge groups. Our linearly connected metric on $\partial (G,\mathcal P)$ agrees with the visual metric on limit sets of vertex groups and is in this sense piecewise visual.


\end{abstract}

\section{Introduction}\label{Intro} 

The following technical result is our main theorem.

\begin{theorem}\label{main} 
Suppose:

1) The finitely generated group $G$ is hyperbolic relative to a finite collection $\mathcal P$ of finitely generated subgroups, the boundary $\partial (G,\mathcal P)$ is connected,  and $G$ has a graph of groups decomposition $\mathcal A$, where 
each vertex and edge group of $\mathcal A$ is finitely generated and infinite.

2) Each element of $\mathcal P$ is either a vertex or edge group of $\mathcal A$, 

3) If $V_i$ is a vertex group of $\mathcal A$,  $V_i\not\in \mathcal P$ and $\mathcal P_i$ is the set of edge groups of $\mathcal A$ adjacent to $V_i$ then $V_i$ is hyperbolic relative to $\mathcal P_i$,  $\partial (V_i,\mathcal P_i)$ is connected, locally connected and has no cut point and each member of $\mathcal P_i$ is a subgroup of a member of $\mathcal P$.  

Then given a visual metric $d_V$ on the topological space $\partial (G, \mathcal P)$ there is a ``piecewise visual", linearly connected metric $d_L$ on $\partial (G,\mathcal P)$ such that if $x_1$ and $x_2$ are points in  the limit set of $gV_i$ ($g\in G$ and $V_i$ a vertex group of $\mathcal A$), then $d_L(x_1,x_2)=d_V(x_1,x_2)$.
\end{theorem}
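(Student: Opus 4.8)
The plan is to use the Bass--Serre tree $T$ of the splitting $\mathcal A$ to organize $\partial(G,\mathcal P)$ as a tree of ``pieces'' and then to build $d_L$ by concatenating $d_V$ along $T$. First I would fix the topological picture. Hypotheses 2) and 3) are exactly the edge--compatibility conditions in the combination theorems for relatively hyperbolic (convergence) groups, so one obtains: for each vertex coset $gV_i$ the limit set $\Lambda(gV_i)\subseteq\partial(G,\mathcal P)$ is canonically homeomorphic to $\partial(V_i,\mathcal P_i)$ (a single point when $V_i\in\mathcal P$); distinct such pieces meet in at most one point, and any such point is the limit set $\Lambda(gE)$ of an edge coset, a parabolic point of $(G,\mathcal P)$; and the incidence pattern of the pieces is the tree $T$, with no nontrivial loops. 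Hence $D:=\bigcup_{g,i}\Lambda(gV_i)$ is a dense tree-graded subset of $\partial(G,\mathcal P)$ whose pieces are the $\Lambda(gV_i)$, and all edge-parabolic points lie in $D$. For $x,y\in D$ with $x$ in a piece $X_0$ and $y$ in a piece $X_n$, the $T$-geodesic between the corresponding vertices determines a unique finite chain of pieces $X_0,\dots,X_n$ and cut points $c_1,\dots,c_n$ with $c_k\in X_{k-1}\cap X_k$; I would set
$$d_L(x,y)=d_V(x,c_1)+d_V(c_1,c_2)+\cdots+d_V(c_{n-1},c_n)+d_V(c_n,y),$$
with $d_L(x,y):=d_V(x,y)$ when $x$ and $y$ share a piece (the case $n=0$).

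Next I would check that this is a metric on $D$. Symmetry is immediate, positivity follows from $d_L\ge d_V$, and the triangle inequality is a tree argument: the $T$-geodesic from $x$ to $z$ lies in the union of those from $x$ to $y$ and from $y$ to $z$, so the defining chain for $d_L(x,z)$ is obtained from those for $d_L(x,y)$ and $d_L(y,z)$ by cancelling a common segment through the median. By construction $d_L=d_V$ on every $\Lambda(gV_i)$. To extend $d_L$ to all of $\partial(G,\mathcal P)$ I would pass to the completion of $(D,d_L)$; since $d_L\ge d_V$ and $\partial(G,\mathcal P)$ is compact, every $d_L$-Cauchy sequence in $D$ has a $d_V$-limit in $\partial(G,\mathcal P)$, and since $d_L\ge d_V$ separates points this yields a continuous injection of the completion into $\partial(G,\mathcal P)$. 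The crux --- and the step I expect to be the main obstacle --- is the quantitative lemma that makes this injection onto and makes $d_L$ finite-valued: for any ray $v_0,v_1,\dots$ in $T$ one needs $\sum_k\operatorname{diam}_{d_V}(X_{v_k})<\infty$, with a tail bound depending only on the $d_V$-distance of the pieces to their common limit point. I would prove this by comparing the $T$-geometry with the cusped (coned-off) geometry of $(G,\mathcal P)$: a vertex coset far out along a $T$-ray has large depth, a visual metric decays exponentially in depth, and the number of pieces of a given depth inside a small $d_V$-ball is controlled. This uses relative quasiconvexity of the vertex groups --- so that $d_V$ restricted to $\Lambda(gV_i)$ is comparable, uniformly in $g$, to a visual metric on $\partial(V_i,\mathcal P_i)$ --- and the hypothesis that all edge groups are infinite. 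Granting the lemma, the completion of $(D,d_L)$ is exactly $\partial(G,\mathcal P)$, $d_L$ is a bounded metric, and the tail bound shows $d_L\to 0$ along $d_V$-convergent sequences, so $d_L$ induces the topology of $\partial(G,\mathcal P)$.

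Finally I would verify linear connectivity. By hypothesis 3) and the known equivalence between linear connectivity of a visual metric and the absence of cut points, $\partial(V_i,\mathcal P_i)$ carries a linearly connected visual metric, and by the quasiconvexity identification above each piece $(\Lambda(gV_i),d_V)$ is linearly connected with a constant $L_0$ independent of $g$ and $i$ (pieces that are single points are trivial). Given $x,y\in\partial(G,\mathcal P)$ with defining chain $X_0,\dots,X_n$ and cut points $c_1,\dots,c_n$, choose for each $k$ a $d_V$-connected subset $S_k\subseteq X_k$ joining the two relevant cut points (or joining $x$, resp.\ $y$, to the adjacent cut point at the ends) with $\operatorname{diam}_{d_V}(S_k)\le L_0$ times the corresponding term of $d_L(x,y)$. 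Then $S=\bigcup_k S_k$ is connected, contains $x$ and $y$, and the telescoping estimate already used for the triangle inequality bounds the $d_L$-diameter of $S$ by a fixed multiple of $L_0\,d_L(x,y)$; for $x$ or $y$ outside $D$ one approximates by points of $D$ and passes to the limit using the summability lemma. Thus $(\partial(G,\mathcal P),d_L)$ is linearly connected and piecewise visual, as required.
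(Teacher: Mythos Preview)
Your overall architecture matches the paper's: organize $\partial(G,\mathcal P)$ as a tree of pieces indexed by the Bass--Serre tree, define $d_L$ by summing $d_V$ along the cut-point chain, and verify linear connectivity piecewise with a uniform constant. Defining $d_L$ first on the dense union $D$ and then completing is equivalent to the paper's choice of defining $d_L$ directly everywhere via (possibly infinite) sums; your summability lemma for rays in $T$ is exactly the paper's convergence result (Theorem~\ref{conv}) for those sums, and your linear-connectivity paragraph is the paper's Section~\ref{Proof} together with Lemma~\ref{LinCon} and Theorem~\ref{sub}.

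The gap is the sentence ``the tail bound shows $d_L\to 0$ along $d_V$-convergent sequences.'' Summability along rays gives finiteness of $d_L$ and surjectivity of the completion map, but continuity of the identity $(\partial X,d_V)\to(\partial X,d_L)$ is a separate and substantially harder statement; in the paper it is Theorem~\ref{cont} and occupies all of Section~\ref{dV=dL}. The chain between two $d_V$-close points $x_1,x_2$ is a finite or bi-infinite $T$-geodesic, not a one-sided ray converging to an ideal point, so a tail bound along a ray does not directly apply. The paper's argument works in the cusped space: it locates the branch edge $q_0C$ where the $T$-projections of the geodesic rays $r_1,r_2$ from $\ast$ to $x_1,x_2$ first diverge, compares the first hitting time $s_0$ of $r_2$ on $q_0C$ to the midpoint $m=\tfrac12(x_1.x_2)_\ast$, and through a case analysis ($s_0\ge m$ versus $s_0<m$, and similarly for $s_{\pm1}$) produces the explicit inequality $d_L(x_1,x_2)\le N\,d_V(x_1,x_2)^{1/4}$. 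The central terms $d_V(c_{-1},c_0)$ and $d_V(c_0,c_1)$ near the branch point require individual horoball-geometric estimates (Lemmas~\ref{C2}, \ref{C3}, \ref{d01}: one constructs geodesics from $q_0$ with long vertical initial segments to force $(c_0.c_{\pm1})_{q_0}$ large) that are not consequences of exponential decay along a ray. Your sketch ``a vertex coset far out along a $T$-ray has large depth'' captures only the easier half; the substantive work is at and near the branch vertex, and that is what your proposal does not address.
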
 

\begin{corollary}\label{mainCor} 
Suppose $(G,\mathcal P)$ is relatively hyperbolic and $\partial (G,P)$ is connected, locally connected and all cut points are parabolic. If all edge groups in the maximal peripheral  splitting of $(G,\mathcal P)$ (Theorem \ref{Acc}) are finitely generated, then there is a piecewise visual linearly connected metric on $\partial (G,\mathcal P).$  
\end{corollary}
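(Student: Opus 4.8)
The strategy is to obtain Corollary~\ref{mainCor} as a direct application of Theorem~\ref{main}, taking for $\mathcal A$ the maximal peripheral splitting of $(G,\mathcal P)$ provided by Theorem~\ref{Acc}. Everything then reduces to verifying that this splitting meets hypotheses (1)--(3). For (1): $(G,\mathcal P)$ is relatively hyperbolic and $\partial(G,\mathcal P)$ is connected by assumption. In the maximal peripheral splitting the edge groups are finitely generated --- this is exactly the standing hypothesis of the corollary --- and each edge group is an infinite parabolic subgroup of $G$ (i.e.\ lies in a conjugate of a member of $\mathcal P$); being parabolic is part of the definition of a peripheral splitting, and infiniteness follows from connectedness of $\partial(G,\mathcal P)$, since a splitting of $G$ over a finite subgroup disconnects the boundary. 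The vertex groups of $\mathcal A$ are of two kinds: the conjugacy representatives of members of $\mathcal P$, which are finitely generated and infinite by the standing conventions on peripheral subgroups; and the remaining ``non-peripheral'' vertex groups $V_i$, which are relatively quasiconvex in $(G,\mathcal P)$, hence finitely generated, and infinite since each $V_i$ contains an incident (infinite) edge group.

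Hypothesis~(2) is immediate: in a peripheral splitting the peripheral subgroups are, by definition, precisely the vertex groups of one colour, so every member of $\mathcal P$ is a vertex group of $\mathcal A$. For hypothesis~(3), fix a non-peripheral vertex group $V_i$ with incident edge groups $\mathcal P_i$. By the structure theory of peripheral splittings $V_i$ is hyperbolic relative to $\mathcal P_i$, its limit set $\Lambda V_i\subseteq\partial(G,\mathcal P)$ is canonically homeomorphic to $\partial(V_i,\mathcal P_i)$, and each member of $\mathcal P_i$ --- being an edge group of $\mathcal A$ --- is a subgroup of a (conjugate of a) member of $\mathcal P$. Since each peripheral, hence each edge-group, limit set in $\partial(G,\mathcal P)$ is a single point, distinct vertex limit sets meet in at most one point, and that point is parabolic; thus $\partial(G,\mathcal P)$ is assembled in tree-like fashion by gluing the sets $\Lambda V_i$ and the parabolic points along these single (parabolic) points, and connectedness of $\partial(G,\mathcal P)$ forces each $\partial(V_i,\mathcal P_i)$ to be connected. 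The crucial assertion is that $\partial(V_i,\mathcal P_i)$ has no cut point. A non-parabolic cut point of $\partial(V_i,\mathcal P_i)$ is not one of the (parabolic) attaching points, so by the gluing picture it would remain a cut point --- and a non-parabolic one --- of $\partial(G,\mathcal P)$, contradicting the hypothesis of the corollary. A parabolic cut point of $\partial(V_i,\mathcal P_i)$, on the other hand, would produce a non-trivial peripheral splitting of $(V_i,\mathcal P_i)$ and hence, after refining $\mathcal A$ at the vertex $V_i$, a peripheral splitting of $(G,\mathcal P)$ strictly dominating $\mathcal A$, contradicting maximality. Finally, local connectedness of $\partial(V_i,\mathcal P_i)$ follows either because a connected relatively hyperbolic boundary with no cut point is automatically locally connected, or because local connectedness passes from $\partial(G,\mathcal P)$ to the limit set of the relatively quasiconvex subgroup $V_i$.

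With (1)--(3) in hand, Theorem~\ref{main} applies to $\mathcal A$ and yields the desired piecewise visual, linearly connected metric on $\partial(G,\mathcal P)$. (If $\partial(G,\mathcal P)$ happens to have no cut point, the maximal peripheral splitting is trivial and there is nothing to do: the given visual metric is already linearly connected by the results recalled in the abstract.) I expect the main obstacle to lie in the middle paragraph --- making rigorous, from the structure theory of peripheral splittings, the claims that in the \emph{maximal} splitting the non-peripheral vertex groups have connected, cut-point-free, locally connected boundaries; this requires carefully relating cut points of $\partial(G,\mathcal P)$, parabolicity, and the maximality/domination order on peripheral splittings, and checking that the homeomorphism $\Lambda V_i\cong\partial(V_i,\mathcal P_i)$ is the one under which the visual and relative-hyperbolic data are compatible, so that the conclusion of Theorem~\ref{main} transfers back without ambiguity.
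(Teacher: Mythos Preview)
Your proposal is correct and follows essentially the same approach as the paper: both verify hypotheses (1)--(3) of Theorem~\ref{main} for the maximal peripheral splitting and then invoke the theorem. The only real difference is packaging: where you sketch the arguments directly (e.g.\ a parabolic cut point in $\partial(V_i,\mathcal P_i)$ would refine the maximal splitting; a non-parabolic one would persist in $\partial(G,\mathcal P)$), the paper cites specific results of Bowditch \cite{Bow01} --- Theorem~1.3 for relative hyperbolicity and connectedness of $\partial(V_i,\mathcal P_i)$, Propositions~7.4 and~8.5 for local connectedness, and Theorem~1.2 (combined with the non-splitting clause of Theorem~\ref{Acc}) for the absence of cut points --- which encode precisely the reasoning you outline.
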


Some comments about our hypotheses are in order. In hypotheses $1)$ and $3)$ of the theorem we assume $\partial (G,\mathcal P)$ and $\partial (V_i,\mathcal P_i)$ are connected. If $G$ is 1-ended, then certainly $\partial (G,\mathcal P)$ is connected, but $G$ need not be 1-ended in order for $\partial (G,\mathcal P)$ to be connected. If $G$ is the free group on $\{x,y\}$ and $P$ is the infinite cyclic group generated by the commutator $xyx^{-1}y^{-1}$ then $\partial (G,P)$ is homeomorphic to a circle. Hypothesis $3)$ requires vertex group boundaries to be connected and locally connected. There is no known example of a relatively hyperbolic group with boundary that is connected and not locally connected. 

Our proof of this theorem is carried out in a cusped space $X$ for $(G,\mathcal P)$. The space $X$ is hyperbolic and the boundary of $X$ is $\partial (G,\mathcal P)$. The space $X$ is built from a Cayley graph $\Gamma$ of $G$ (see \S \ref{Cusped}). Since the vertices of $\Gamma$ are the elements of $G$ the limit set of $gV_i$ (referred to in the theorem) is a subset set of $\partial (G,\mathcal P)$.

If a space has a linearly connected metric, then it is locally connected, but even the unit interval with usual topology has metrics which are not linearly connected. Bonk and B. Kleiner \cite{BK05} prove that visual metrics on boundaries of 1-ended hyperbolic groups are linearly connected. J. Mackay and A. Sisto \cite{McS18} prove that if $(G,\mathcal P)$ is a relatively hyperbolic pair and $\partial (G,\mathcal P)$ is connected, locally connected and without cut point, then any visual metric on $\partial (G,\mathcal P)$ is linearly connected. If $\partial (G,\mathcal P)$ has a cut point, then any visual metric on this space is not linearly connected (see \cite{GHM19}). Our goal here is to consider connected boundaries of relatively hyperbolic groups and produce ``piecewise visual" linearly connected metrics on these boundaries (in the presence of cut points). If $X$ is a cusped space for the relatively hyperbolic pair $(G,\mathcal P)$, $d_V$ is a visual metric on $\partial X =\partial (G, \mathcal P)$, then our hypotheses imply the relatively hyperbolic vertex groups of our decomposition have linearly connected boundary. We show the limit set of any coset of any vertex group of our decomposition is linearly connected under $d_V$ and we define our proposed linearly connected metric on $\partial X$ to agree with $d_V$ on each such limit set.
Given any two points $x,y\in\partial X$, let $C_{(x,y)}=\{\ldots , c_{-1},c_0,c_1\ldots \}$ be the set cut points in $\partial X$ separating $x$ and $y$. This set may be finite, infinite or bi-infinite and is ordered by the Bass-Serre tree of the splitting. Since $\{c_i, c_{i+1}\}$ is a subset of the (linearly connected) limit set of a vertex group coset, $d_V(c_i,c_{i+1})=d_L(c_i,c_{i+1})$. If $C_{(x,y)}$ is bi-infinite, we define $d_L(x,y)=\cdots +d_V(c_{-1},c_0)+d_V(c_0,c_1)+\cdots $ and extend $d_L$ to all of $\partial X$ in a similar way. We must show that the summations involved are convergent, $d_L$ is a well defined metric, $d_L$ and $d_V$ define the same topology and that $d_L$ is linearly connected. The most difficult of task is to show $d_L$ and $d_V$ define the same topology. We produce a constant $N$ and prove that if $d_V(x,y)<({\epsilon \over N})^4$ then $d_L(x,y)<\epsilon$ so that the identity function from the compact metric space $(\partial X, d_V)$ to the metric space $(\partial X, d_L)$ is continuous and hence a homeomorphism.

The paper is organized as follows: Our results connect with important splittings results for relatively hyperbolic groups. This is discussed in \S \ref{Bow} and Corollary \ref{mainCor} is proved at the end of this section. 
The basics of hyperbolic metrics, inner products  and visual metrics are covered in \S \ref{HI}. We examine inner products on the boundary of a hyperbolic space and show that ideal triangles are $\delta$ thin. 
In \S \ref{LCM} we define linearly connected metrics and show that $[0,1]$ with usual topology can be endowed with a non linearly connected metric. 
Basic definitions and results about cusped spaces and relatively hyperbolic groups are listed in \S \ref{Cusped}. Lemmas \ref{QC} and \ref{Deep} are fundamental to the proofs in the sections that follow this section. In order to prove our main theorem, we must know that the linearly connected visual metrics on our vertex groups carry over to linearly connected limit sets of their quasi-isometrically embedded images in the cusped space for the over group. This is a non-trivial matter since visual metrics are defined in terms of exponential functions. 
Section \ref{QIE} is devoted to a general result (Theorem \ref{sub}) that implies linear connectedness of boundaries is preserved by quasi-isometries. 
Our piecewise visual linearly connected metrics are defined in \ref{PCM}. Cut points in boundaries and separating subsets of our cusped space and how they separate geodesic lines are examined. Theorem \ref{conv} is the main result of this section. It concludes that our new distance function is a metric on the boundary of our cusped space. 
The most complex result of the paper is proved in \S \ref{dV=dL}. Theorem \ref{cont} shows that the visual metric and our linearly connected metric on the boundary of a relatively hyperbolic group (with cut points) generate the same topology. 
At this point, it is simply a matter of combining the results in the previous sections to prove our main theorem in Section \ref{Proof}. 
Finally in Section \ref{double} we ask if our piecewise visual metric is doubling, in the appropriate setting.

\section {Connections to Known Splittings} \label {Bow} 


\begin{definition} (\cite{Bow01}). Let $(G,\mathcal P)$ be a relatively hyperbolic group. A {\it peripheral splitting} of $G$ is a representation of $G$ as a finite bipartite graph of groups where $\mathcal P$ consists precisely of the (conjugacy classes of) vertex groups of one color. A peripheral splitting is a refinement of another if there is a color preserving folding of the first into the second.
\end{definition}

The hypotheses of our main theorem are similar to those in several of Bowditch's results and lead to a corollary. 
It is established in (\cite{Bow01}, Theorem 1.3) that if $\partial (G,\mathcal P)$ 
is connected, then any non-peripheral vertex group of a peripheral splitting also has connected boundary and is hyperbolic relative to its adjacent edge groups. The natural hyperbolic structure on vertex groups refereed to in the following accessibility result of Bowditch might not consist solely of adjacent edge groups. 

\begin{theorem} (\cite{Bow01}, Theorem 1.4) \label{Acc} 
Suppose the 1-ended group $G$ is hyperbolic relative to $\mathcal P$. Then $(G,\mathcal P)$ admits a (possibly trivial) maximal peripheral splitting. In other words,  $G$ splits over as a finite bipartite graph of groups $\mathcal G(G)$ with the following properties: Every $P\in\mathcal{P}$ is conjugate into a vertex group of one color, and each vertex group $H$ inherits a natural relatively hyperbolic structure $(H,\mathcal Q)$ such that $H$ does not split over a finite or parabolic subgroup relative to $\mathcal Q$.  
\end{theorem}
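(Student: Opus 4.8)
The plan is to realize the maximal peripheral splitting as the Bass--Serre tree of a canonical $G$-tree extracted from the topology and convergence-group dynamics of $\partial(G,\mathcal P)$, and then to verify the necessary group-theoretic finiteness and maximality properties geometrically. Throughout I would work with the characterization of relative hyperbolicity via the geometrically finite convergence action of $G$ on $\partial(G,\mathcal P)$, together with the cusped/fine coned-off Cayley graph, so that parabolic points of the boundary are exactly the fixed points of conjugates of the members of $\mathcal P$, and splittings over parabolic subgroups can be detected from how parabolic points disconnect the boundary.

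First I would record the dictionary between peripheral splittings and boundary dynamics. A peripheral splitting presents $G$ as acting minimally on a bipartite tree $T$; since every edge of $T$ joins a peripheral vertex to a non-peripheral one, each edge stabilizer is a subgroup of a conjugate of some $P\in\mathcal P$, hence parabolic, and (using $1$-endedness, which rules out splittings of $G$ over finite groups) necessarily infinite. Conversely, by Theorem 1.3 of \cite{Bow01} the non-peripheral vertex groups of a peripheral splitting have connected boundary and are hyperbolic relative to their adjacent edge groups, so a non-peripheral vertex of $T$ corresponds precisely to a ``piece'' of $\partial(G,\mathcal P)$ that is not separated by the parabolic points assigned to it. In particular, refining a peripheral splitting corresponds to equivariantly blowing up a non-peripheral vertex along a further splitting of its vertex group over a parabolic (or finite, which cannot occur here) subgroup relative to its edge groups.

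Next I would construct the candidate maximal tree $T_{\max}$ from the collection of parabolic cut points of $\partial(G,\mathcal P)$ together with the maximal parabolically-inseparable subcontinua, which assemble into a $G$-invariant pretree; the essential point is to show that this pretree is in fact a simplicial tree with only finitely many $G$-orbits of vertices and edges. This is the accessibility step, and I would argue it inside the cusped space: the action of $G$ there is cocompact on a suitable invariant subspace, the edge groups of any peripheral splitting are quasiconvex parabolic subgroups, and refining a peripheral splitting strictly increases a combinatorial complexity (the number of $G$-orbits of vertices and edges of the Bass--Serre tree) that I would bound a priori in terms of the fineness of the coned-off graph. Hence any ascending chain of refinements of the trivial peripheral splitting stabilizes, a maximal peripheral splitting exists, and running the refinement process to completion produces $T_{\max}$; the same boundary-intrinsic construction also yields its uniqueness, though that is not needed for the statement.

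Finally I would verify maximality in the stated form: if a non-peripheral vertex group $H$, equipped with the relatively hyperbolic structure $(H,\mathcal Q)$ built from its adjacent edge groups, split over a finite or parabolic subgroup relative to $\mathcal Q$, then pulling this $H$-tree back and reattaching the incident peripheral vertices along the edges of $\mathcal Q$ would produce a strictly finer peripheral splitting of $G$, contradicting maximality; and the structures $(H,\mathcal Q)$ are genuinely relatively hyperbolic by Theorem 1.3 of \cite{Bow01}, so the conclusion is meaningful. I expect the accessibility/finiteness claim for $T_{\max}$ to be the main obstacle: since $G$ need not be finitely presented, the classical Dunwoody and Bestvina--Feighn accessibility theorems do not apply directly, and the bound on the complexity of peripheral splittings must instead be squeezed out of the fineness of the coned-off Cayley graph and the quasiconvexity of parabolic subgroups in the cusped space. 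This is the technical heart of the proof.
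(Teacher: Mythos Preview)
The paper does not supply a proof of this theorem at all: it is quoted verbatim as Theorem~1.4 of \cite{Bow01} and used as a black box (in particular in the proof of Corollary~\ref{mainCor}). So there is no ``paper's own proof'' to compare your proposal against.

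For what it is worth, your sketch is a reasonable outline of Bowditch's actual argument in \cite{Bow01}: build a canonical $G$-pretree from the parabolic cut points of $\partial(G,\mathcal P)$ and the maximal cut-point-free subcontinua, show it is simplicial with finite quotient (the accessibility step), and read off maximality from the fact that a further peripheral splitting of a non-peripheral vertex group would refine the tree. You are also right that the finiteness/accessibility step is the crux and cannot be deduced from Dunwoody or Bestvina--Feighn accessibility in general; Bowditch handles it via the topology of the boundary rather than via fineness of the coned-off graph as you suggest, so if you were to write this up you would want to look at his treatment of that point.
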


This splitting is called the {\it maximal peripheral splitting}. Recall that a splitting of $(G,\mathcal P)$ is {\it relative to $\mathcal P$} if each element of $\mathcal P$ is conjugate into a vertex group of the splitting. 

\begin{theorem} (\cite{Bow01}, Proposition 5.2) Suppose that $\Gamma$ is a group, and $\mathcal G$ is a peripheral structure with every peripheral subgroup 1-ended. If $\Gamma$  splits over a subgroup of a peripheral subgroup, then it splits relative to $\mathcal G$ over a subgroup of a peripheral subgroup.
\end{theorem}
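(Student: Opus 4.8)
The plan is to argue with the Bass--Serre tree of the given splitting together with one collapse move. Let $T$ be a $\Gamma$-minimal Bass--Serre tree for a nontrivial splitting of $\Gamma$ over a subgroup $C$ of a peripheral subgroup $P\in\mathcal G$, so that every edge stabilizer of $T$ is a conjugate of $C$ and hence a subgroup of a conjugate of $P$. If $C$ is finite then all edge stabilizers are finite, and since each peripheral subgroup is one-ended it fixes a vertex of $T$ (a fixed-point-free action with finite edge stabilizers would give a splitting over a finite subgroup), so $T$ already exhibits a splitting of $\Gamma$ relative to $\mathcal G$ over $C$ and there is nothing to prove. So assume $C$ is infinite.

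Two facts do the work. (i) The peripheral structure of a relatively hyperbolic group is almost malnormal: for a peripheral subgroup $P$ and $g\in\Gamma$ the group $gPg^{-1}\cap P$ is infinite only when $g\in P$, and two distinct parabolic subgroups intersect in a finite group. (ii) A one-ended group cannot act on a tree with a finite-stabilizer edge in its minimal invariant subtree --- collapsing the infinite-stabilizer edges would otherwise leave it acting on a tree with finite edge stabilizers and a surviving edge, on which (by minimality) it fixes no vertex, contradicting one-endedness. Applying (i) and (ii) to $T$: if $P'\in\mathcal G$ is not conjugate to $P$ then $P'$ is elliptic in $T$, since an edge of its minimal subtree would carry an infinite stabilizer lying in both $P'$ and a conjugate of $C\le P$, forcing $P'$ to be conjugate to $P$. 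Thus every parabolic not conjugate to $P$ is elliptic. If $P$ itself is elliptic then $T$ is already relative to $\mathcal G$ and we are done; otherwise let $T_P$ be the minimal $P$-invariant subtree. For an edge $e$ of $T_P$, $\mathrm{Stab}_P(e)$ is infinite by (ii) and lies in the conjugate of $P$ that contains $\mathrm{Stab}_\Gamma(e)$, so by (i) that conjugate is $P$ and $\mathrm{Stab}_\Gamma(e)\le P$; the same computation applied to $P$ and $gPg^{-1}$ shows $T_P$ and $gT_P$ share no edge when $g\notin P$. Hence distinct $\Gamma$-translates of $T_P$ meet in at most one point and $T_P$ is stabilized exactly by $P$.

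Since the $\Gamma$-translates of $T_P$ are exactly the minimal invariant subtrees of the conjugates of $P$ and pairwise meet in at most a point, collapsing each of them to a point produces a genuine $\Gamma$-tree $\widehat T$: every conjugate of $P$ is elliptic in $\widehat T$ by construction, every parabolic elliptic in $T$ stays elliptic, and the edges of $\widehat T$ are the edges of $T$ in no translate of $T_P$, with stabilizers conjugates of $C$ --- subgroups of peripheral subgroups. If $\widehat T$ has an edge, it is the splitting we want. If $\widehat T$ is a single point, then every edge of $T$ lies in a translate of $T_P$, and I would instead pass to the nerve $\mathcal N$ of this covering of $T$; $\mathcal N$ is a tree because its pieces are subtrees of the tree $T$ meeting pairwise in at most points, $\Gamma$ acts on it, a ``subtree'' vertex has stabilizer a conjugate of $P$ (hence is elliptic), and since a parabolic not conjugate to $P$ cannot be contained in a conjugate of $P$, every parabolic either is such a vertex stabilizer or fixes a ``branch point'' vertex, so $\mathcal N$ is relative to $\mathcal G$. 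An edge of $\mathcal N$ records an incidence of a translate $gT_P$ with a branch point $v$, with stabilizer $\mathrm{Stab}_{gPg^{-1}}(v)\le gPg^{-1}$, again a subgroup of a peripheral subgroup; and $\mathcal N$ has an edge unless there is a single translate of $T_P$, i.e.\ (using that $T_P$ is stabilized exactly by $P$) unless $\Gamma=P$, which is excluded since peripheral subgroups are proper.

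The Bass--Serre bookkeeping in the middle paragraph is routine. The main obstacle is putting facts (i) and (ii) cleanly in place --- quoting almost malnormality of the peripheral structure of a relatively hyperbolic group, and the standard but slightly delicate statement that a one-ended group admits no action on a tree with a finite-stabilizer edge in its minimal subtree --- and then dispatching the degenerate endgame with the nerve $\mathcal N$. Everything after that is formal.
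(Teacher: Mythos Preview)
The paper does not give its own proof of this statement; it is quoted from \cite{Bow01} and used as a black box, so there is nothing in the paper to compare your argument against.

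On the merits of your argument: there is a genuine gap. Your fact (i) --- almost malnormality of the peripheral family --- is justified by ``the peripheral structure of a relatively hyperbolic group is almost malnormal,'' but the statement as written does not assume that $\Gamma$ is hyperbolic relative to $\mathcal G$. In Bowditch's paper a peripheral structure is simply a conjugacy--invariant collection of subgroups, and Proposition~5.2 is stated in that generality. Without almost malnormality several of your steps break: from an infinite intersection $P'\cap gCg^{-1}$ you can no longer conclude that $P'$ coincides with a conjugate of $P$, so you lose the claim that peripherals not conjugate to $P$ are elliptic in $T$; and you can no longer conclude that distinct $\Gamma$--translates of $T_P$ are edge--disjoint (nor that the setwise stabilizer of $T_P$ is exactly $P$), so neither the collapse to $\widehat T$ nor the nerve $\mathcal N$ is available.

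That said, the present paper only ever applies this result inside the relatively hyperbolic setting, where almost malnormality and self--normalization of maximal parabolics do hold. Granting that extra hypothesis, your Bass--Serre/collapse argument is correct and gives a clean, self--contained proof of the special case actually needed here. If your aim is Bowditch's proposition in its stated generality you will need a different mechanism to control how non--elliptic peripherals sit in $T$; if you only need the relatively hyperbolic case, say so explicitly and your argument stands.
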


\begin{proof} (of Corollary \ref{mainCor})
By Theorem \ref{Acc} $(G,\mathcal P)$ admits a maximal peripheral splitting $\mathcal G$ of $G$ with finitely generated vertex groups, and whose underlying graph is bipartite with vertices of one color corresponding to the of elements of $\mathcal{P}.$ By hypothesis the edge groups of $\mathcal G$ are finitely generated, and since $\partial (G,\mathcal{P})$ is connected $\mathcal{G}$ does not have any finite edge groups (See \cite{Bow01} Proposition 1.1). Thus $\mathcal G$ satisfies $(1)$ and $(2)$ of Theorem \ref{main}. By Theorem 1.3 of \cite{Bow01}, if $H$ is not a peripheral vertex, then $H$ is hyperbolic relative to $\mathcal{Q}$ the collection of edge groups adjacent to $H$. Since $\partial(G,\mathcal{P})$ is connected the limit set of $H$ is connected (\cite{Bow01} Theorem 1.3), moreover, this limit set is homeomorphic to $\partial(H,\mathcal{Q})$. Additionally, since $\partial(G,\mathcal{P})$ is locally connected and all cut points are parabolic the limit set of $H$ is locally connected (see \cite{Bow01} Propositions 7.4 and 8.5). Because $H$ does not admit a perihperal splitting, $\partial(H,\mathcal{Q})$ has no cut point (see Theorem 1.2 of \cite{Bow01}). Thus we have satisfied $(3)$ of Theorem \ref{main}.
\end{proof}

\section{Hyperbolicity and Inner Products} \label {HI}
\begin{definition}\label{IP}
If $X$ is a geodesic metric space with base point $p$, there is an inner product $``\cdot"$ for $X$ defined as follows:
If $x,y\in X$ define 
$$(x. y)_p={1\over 2}(d(p,x)+d(p,y)-d(x,y))$$ 
If there is a constant $\delta\geq 0$ such that for all $x,y,z\in X$: 
$$(x.p)_w\geq min\{(x.z)_p,(z.y)_p\}-\delta$$
we say that the inner product and the space $(X,d)$ are $\delta$-{\it hyperbolic}.
\end{definition}

There are a number of equivalent forms of hyperbolicity for geodesic metric spaces. In this paper we use the following {\it thin triangles} definition. 

\begin{definition} \label{hyp} 
Suppose $(X,d)$ is a geodesic metric space.  If $\triangle(x,y,z)$ is a geodesic triangle in $X$, let $\triangle '(x',y',z')$ be a Euclidean comparison triangle (i.e. $d'(x',y')=d(x,y)$ etc., where $d'$ is the Euclidean metric.) There is a homeomorphism $f:\triangle '\to \triangle $ which is an isometry on each side of $\triangle$. The maximum inscribed circle in $\triangle '$ meets the side $[x',y']$ (respectively $[x',z']$, $[y',z']$) in a point $c_x'$ (resp. $c_y'$, $c_z'$) such that

$$d(x',c_z')=d(x',c_y'), \ d(y',c_x')=d(y',c_z'), \ d(z',c_y')=d(z',c_x'). $$

Let $c_x=f^{-1}(c_x')$, $c_y=f^{-1}(c_y')$ and $c_z=f^{-1}(c_z')$. We call the points $c_x,c_y, c_z$ the {\it internal points} of $\triangle $.
There is a unique continuous function $t_{\triangle}:\triangle '\to T_\triangle$ of  $\triangle '$ onto a tripod $T_\triangle$, where $t_\triangle$ is an isometry on the edges of $\triangle '$ and $T_\triangle$ is a tree with one vertex $w$ of degree 3, and vertices $x'', y'', z''$ each of degree one, such that $d(w,z'')=d(z,c_y)=d(z,c_x)$ etc. (See Figure 1.) Let $f_\triangle$ be the composite map $f_\triangle \equiv t_\triangle \circ f:\triangle\to T_\triangle$.
We say that $\triangle (x,y,z)$ is $\delta-thin$ if fibers of $f_\triangle$ have diameter at most $\delta$ in $X$. In other words, for all $p,q$ in $\triangle$,
$$f_\triangle (p)=f_\triangle (q) \hbox{ implies } d_X(p,q)\leq \delta .$$
 
 We say that {\it triangles are thin} if there is a constant $\delta$ such that all geodesic triangles in $X$ are $\delta$-{\it thin}. In this case we say $X$ is $\delta$-{\it hyperbolic}. 
 \end{definition}
 
 \vspace {.5in}
\vbox to 3in{\vspace {-2in} \hspace {-1.95in}
\includegraphics[scale=1]{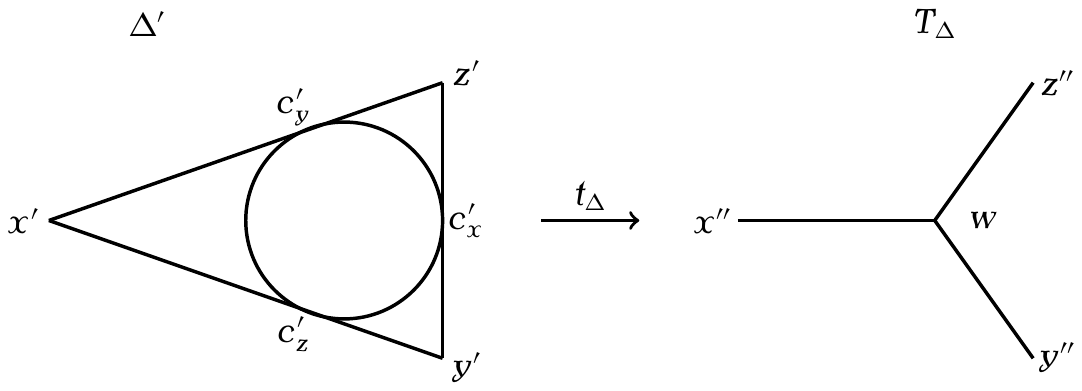}
\vss }
\vspace{-1.8in}

\centerline{Figure 1}

\medskip

\begin{lemma} \label{inpro} 
Suppose $(X,d)$ is a $\delta$-hyperbolic geodesic metric space and $\triangle(x,y,z)$ a geodesic triangle. If $c_x$, $c_y$ and $c_z$ are the internal points of $\triangle$, then $(y.z)_x=d(x,c_z)=d(x, c_y)$. 
\end{lemma}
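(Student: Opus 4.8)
The plan is to derive the identity purely from the defining relations of the internal points in Definition~\ref{hyp} together with the elementary ``equal tangent lengths'' property of a Euclidean triangle; in particular the argument will not actually use $\delta$-hyperbolicity and is valid for an arbitrary geodesic triangle.

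First I would fix which side each internal point lies on. The points $c_x',c_y',c_z'$ are the three points where the inscribed circle of the comparison triangle $\triangle'$ touches its sides, so (with the conventions of Definition~\ref{hyp}) $c_x'\in[y',z']$, $c_y'\in[x',z']$ and $c_z'\in[x',y']$, and hence $c_x\in[y,z]$, $c_y\in[x,z]$, $c_z\in[x,y]$. Set
$$\alpha=d'(x',c_z')=d'(x',c_y'),\qquad \beta=d'(y',c_z')=d'(y',c_x'),\qquad \gamma=d'(z',c_y')=d'(z',c_x'),$$
where the displayed equalities are exactly those asserted in Definition~\ref{hyp} (the two tangent segments to a circle from a common point have equal length). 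Since $c_z'$ lies on the straight Euclidean segment $[x',y']$ between its endpoints, $\alpha+\beta=d'(x',y')$; likewise $\beta+\gamma=d'(y',z')$ and $\alpha+\gamma=d'(x',z')$.

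Next I would solve this $3\times3$ linear system for $\alpha$: adding the first and third equations and subtracting the second gives $2\alpha=d'(x',y')+d'(x',z')-d'(y',z')$. Because $\triangle'$ is a comparison triangle for $\triangle(x,y,z)$ we have $d'(x',y')=d(x,y)$, $d'(x',z')=d(x,z)$ and $d'(y',z')=d(y,z)$, so $\alpha=\frac{1}{2}\big(d(x,y)+d(x,z)-d(y,z)\big)=(y.z)_x$ by Definition~\ref{IP}.

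Finally I would transport this back to $X$ using the map $f:\triangle'\to\triangle$, which restricts to an isometry on each side of $\triangle$ and satisfies $f(x')=x$, $f(c_y')=c_y$, $f(c_z')=c_z$. Restricting $f$ to the side carrying $x'$ and $c_z'$ gives $d(x,c_z)=d'(x',c_z')=\alpha$, and restricting to the side carrying $x'$ and $c_y'$ gives $d(x,c_y)=d'(x',c_y')=\alpha$; combining with the previous step yields $(y.z)_x=d(x,c_z)=d(x,c_y)$. The computation itself is one line; the only point demanding attention is keeping the incidences straight, since it is precisely those that make the three equations $\alpha+\beta=d'(x',y')$, $\beta+\gamma=d'(y',z')$, $\alpha+\gamma=d'(x',z')$ hold on the nose.
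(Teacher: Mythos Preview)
Your proof is correct and follows essentially the same approach as the paper's: both compute $d(x',c_z')$ in the comparison triangle via the equal-tangent relations and identify it with the Gromov product $(y.z)_x$, then transport back along the side-isometry $f$. Your version simply spells out the three equations $\alpha+\beta$, $\beta+\gamma$, $\alpha+\gamma$ and solves explicitly, whereas the paper compresses this into a single displayed line; you also correctly observe that the $\delta$-hyperbolicity hypothesis is not used.
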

\begin{proof} 
Notice that in the Euclidean comparison triangle
$$d(x',c_z')={1\over 2} (d(x',c_z')+d(x',c_y'))={1\over 2}(d(x',z')+d(x',y')-d(z',y')).$$
\end{proof}

\begin{lemma} \label{parallel} 
Let $(X,d)$ be a $\delta$-hyperbolic geodesic metric space.  Suppose $\alpha$ is a geodesic from $a_1$ to $a_2$, $\beta$ is a geodesic from $b_1$ to $b_2$ and  $K=max\{d(a_1,b_1),d(a_2,b_2),\delta\}$,  Then for any point $x$ on $\alpha$ there is a point $x'$ on $\beta$ such that $d(x,x')\leq K+2\delta$. Furthermore there are constants  $K_1,K_2,K_3\in [-K,K]$ such that $d(\alpha(K_1+i),\beta(K_2+i))\leq 2\delta$ for $0\leq i\leq |\alpha |-K_3-K_1$.
\end{lemma}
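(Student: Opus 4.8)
The plan is to prove both statements by pushing points across the geodesic triangles (or quadrilateral) formed by $a_1, a_2, b_1, b_2$, using the thin triangles condition of Definition \ref{hyp}. First I would reduce the quadrilateral $a_1 a_2 b_2 b_1$ to a pair of triangles by choosing a diagonal, say the geodesic $\gamma$ from $a_1$ to $b_2$; this gives triangles $\triangle(a_1, a_2, b_2)$ and $\triangle(a_1, b_1, b_2)$. For the first statement, let $x$ be a point on $\alpha$. Applying $\delta$-thinness to $\triangle(a_1, a_2, b_2)$ (with the tripod map $f_\triangle$), the point $x$ on side $\alpha = [a_1,a_2]$ has the same tripod image as a point $x''$ lying on $\gamma \cup [a_2, b_2]$; since $d(a_2,b_2) \le K$, if $x$ is within $K$ of $a_2$ we are already done with room to spare, and otherwise $x''$ lies on the diagonal $\gamma$ and $d(x, x'') \le \delta$. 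Then I would apply $\delta$-thinness to the second triangle $\triangle(a_1, b_1, b_2)$ to push $x''$ (a point on $\gamma$) to a point $x'$ on $\beta \cup [a_1, b_1]$; again the $[a_1,b_1]$ portion has length $\le K$, so either $x'$ is on $\beta$ with $d(x'', x') \le \delta$, or it is near $b_1$ and we route through $b_1$ at cost $\le K$. Combining the two steps gives $d(x, x') \le K + 2\delta$ in all cases (the worst case being one $\delta$ from each triangle plus at most $K$ to walk along a short side).

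**The near-isometry along the interiors.**

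For the "furthermore" clause I would make the pushing map respect arc length. The key observation is that the tripod map $f_\triangle$ is an isometry on each side, so the point on $\alpha$ at distance $s$ from $a_1$ maps to the tripod point at distance $s$ from $x''_{a_1}$ (the image of $a_1$); matching this with the corresponding point on the diagonal $\gamma$ shows that, away from a neighborhood of the branch point of length controlled by $d(a_1,b_1) \le K$ and $d(a_2,b_2)\le K$, the parametrization of $\alpha$ and the parametrization of $\gamma$ track each other up to a bounded shift. Concretely, the branch point of the first tripod sits at distance $(a_2 . b_2)_{a_1}$ from $a_1$ along $\gamma$, and $|(a_2.b_2)_{a_1} - d(a_1,a_2)| \le d(a_2, b_2) \le K$ and similarly for the other corner; this is where the constants $K_1, K_2, K_3 \in [-K, K]$ come from — they record the discrepancies between the three pairs of initial/terminal segments. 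So $\alpha(K_1 + i)$ and the diagonal point at parameter $i$ (suitably shifted) have tripod images within $\delta$ of each other for $i$ in the stated range, giving distance $\le \delta$; repeating with the second triangle to pass from $\gamma$ to $\beta$ costs another $\delta$, for a total of $2\delta$. I would choose $K_1$ to be the offset aligning $\alpha$ with the diagonal at the $a_1$–$b_1$ corner, $K_2$ the offset aligning $\beta$ with the diagonal there, and $K_3$ to cut off the portion near the $a_2$–$b_2$ corner where the tripods branch.

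**Expected obstacle.** The genuinely delicate point is bookkeeping the three constants $K_1, K_2, K_3$ and verifying the index range $0 \le i \le |\alpha| - K_3 - K_1$ is exactly the set of parameters for which both tripod-matchings are simultaneously valid — i.e., where the relevant points on $\alpha$, $\gamma$, and $\beta$ all lie on the "long" legs of their respective tripods rather than near the branch points or the short sides $[a_1,b_1]$, $[a_2,b_2]$. One has to check that the bounds $d(a_i,b_i)\le K$ and $\delta \le K$ keep all the offsets inside $[-K,K]$, which is where the definition $K = \max\{d(a_1,b_1), d(a_2,b_2), \delta\}$ is used. The triangle-thinness itself is immediate from Definition \ref{hyp}; everything else is careful tracking of arc-length parametrizations through the tripod maps. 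I would present it by first stating the single-triangle version as a sublemma (point on one side is $\delta$-close to a point on the union of the other two, with matching parametrization off the short side), then applying it twice.
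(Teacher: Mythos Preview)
Your approach is correct and is essentially the same as the paper's: the paper also chooses the diagonal $\tau$ from $a_1$ to $b_2$, considers the two thin triangles $\triangle(\alpha,\gamma_2,\tau)$ and $\triangle(\beta,\gamma_1,\tau)$, and identifies $K_1,K_2,K_3$ as distances to the internal (insize) points of these triangles. The paper simply names the constants explicitly (e.g.\ $K_1=d(a_1,p_0)$, $K_2=d(b_1,p_2)$, $K_3=d(q_2,a_2)$) and observes the required bounds, which is exactly the ``bookkeeping'' you flag as the expected obstacle.
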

\begin{proof}
For $i\in\{1,2\}$ let $\gamma_i$ be a geodesic from $a_i$ to $b_i$ and $\tau$ a geodesic from $a_1$ to $b_2$. Consider the geodesic triangle $\triangle (\alpha,\gamma_2, \tau)$ with insize point $q_0$ on $\tau$, $q_1$ on $\gamma_2$ and $ q_2$ on $\alpha$. 
Consider the geodesic triangle $\triangle (\beta,\gamma_1, \tau)$ with insize point $p_0$ on $\tau$, $p_1$ on $\gamma_1$ and $ p_2$ on $\beta$ (see Figure 2). 

\vspace {.4in}
\vbox to 3in{\vspace {-1.8in} \hspace {-1.95in}
\includegraphics[scale=1]{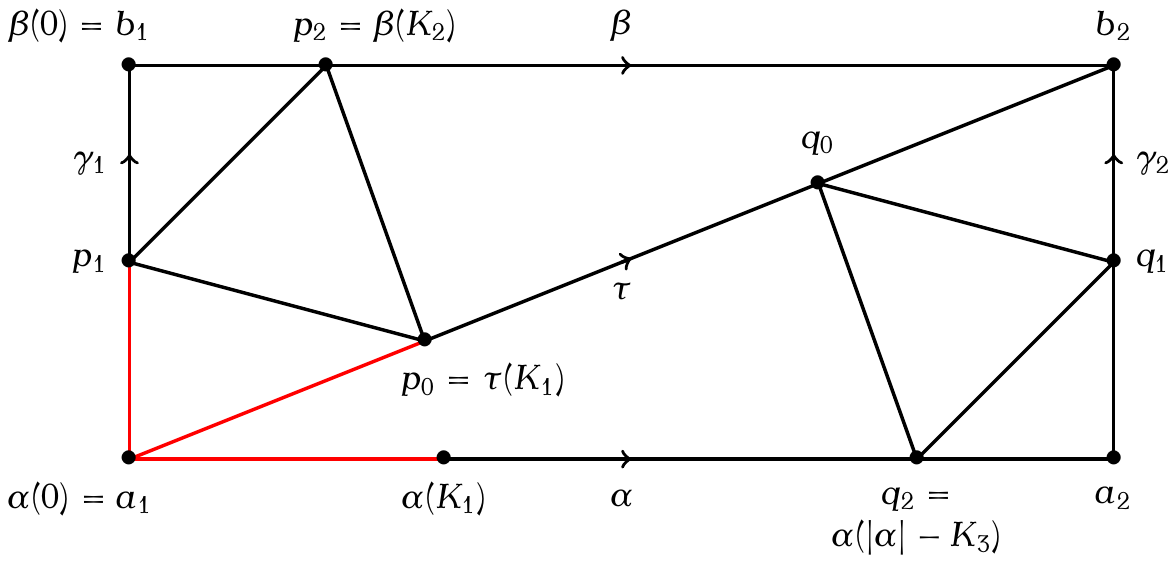}
\vss }
\vspace{-.8in}

\centerline{Figure 2}

\medskip

Let $d(a_1,p_0)= K_1$, $d(b_1,p_2)=K_2$  and $d(q_2, a_2)\leq K_3$. Notice that $K_1\leq K$, $K_2\leq \delta$ and $K_3\leq K$. 
If $t\in [0,K_1]$ or $[|\alpha |-K_3,|\alpha|]$ then $d(\alpha(t) ,\{b_1,b_2\})\leq K+\delta$. Otherwise $t=K_1+i$ and $d(\alpha(K_1+i), \beta (K_2+i))\leq 2\delta$  for $i+K_1\leq |\alpha|-K_3$. 
\end{proof}

If $X$ is a hyperbolic geodesic metric space, the points of $\partial X$ are equivalence classes $[r]$ of geodesic rays $r$, where $r$ and $s$ are equivalent if there is a number $K\geq 0$ such that $d(r(k),s(k))\leq K$ for all $k\geq 0$. Note that if such a $K$ exists for $r,s$ based at $p$, then our thin triangle condition forces $d(r(k),s(k))\leq \delta$ for all $k\geq 0$. (Simply consider the geodesic triangle formed by $r([0,k+K])$, $s([0,k+K])$ and a geodesic (of length $\leq K$) connecting $r(k+K)$ to $s(k+K)$. The internal points on $r$ and $s$ are beyond $r(k)$ and $s(k)$ respectively.)

The inner product extends to $X\cup \partial X$ (see Definition 4.4 \cite{ABC91}). Given a geodesic line $l:(-\infty,\infty)\to X$, we say $l^-$ converges to $r$ if there is a constant $K$ such that $d(r(t), l(-t))\leq K$ for all $t\geq 0$. Similarly for $l^+$. The line $l$ and the rays $r$ and $s$ (based at $p\in X$) forms an {\it ideal geodesic triangle} if $l^-$ converges to $r$ and $l^+$ converges to $s$. Note that if $r$ and $r'$ converge to the same boundary point, and $(r,s, l)$ form an ideal triangle (with $l$ a line) then $(r',s,l)$ forms an ideal geodesic triangle. The next lemma shows that ideal triangles of this type are $5\delta$ thin.

\begin{lemma}  \label{Yes} 
Let $r$ and $s$ be geodesic rays based at $\ast$ in the hyperbolic space $X$, representing distinct points $x,y\in\partial(X)$, respectively.  If $m=(x.y)_\ast$ and $\ell$ is any line with $\ell^-\in x$ and $\ell^{+}\in y,$ then there is a $z\in\ell$ such that $d(r(m),z)\leq 5\delta$ and $d(s(m),z)\leq 5\delta$. If $l$ is parametrized such that $l(0)=z$, then $d(l(-j), r(m+j))\leq 5\delta$ for all $j\geq 0$, and $d(l(j), s(m+j))\leq 5\delta$ for all $j\geq 0$. We call the points $r(m)$, $s(m)$ and $z$ internal points of the ideal geodesic triangle $\triangle (r, s, l)$ (even though $z$ may not be unique). 
\end{lemma}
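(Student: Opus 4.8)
The plan is to reduce the ideal-triangle statement to the thin-triangles property for ordinary (finite) geodesic triangles, by approximating the line $\ell$ and the rays $r,s$ by long finite geodesics. First I would pick a large parameter $T \gg m$ and let $a = \ell(-T)$, $b = \ell(T)$. Since $\ell^- \in x$ and $r$ represents $x$, the rays $t \mapsto \ell(-t)$ and $r$ are equivalent, so by the remark following Definition~\ref{hyp} (applied after translating $\ell$'s negative sub-ray to a common basepoint) their points stay within a bounded distance; more usefully, I would connect $a$ to $\ast$ by a geodesic $\sigma$ and note that the triangle $\triangle(a,\ast,b)$-type configurations let me compare $r$ with the segment $[\ast,a]$ and with $\ell$. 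The cleanest route: form the geodesic triangle with vertices $\ast$, $a=\ell(-T)$, $b=\ell(T)$, using the segment of $\ell$ from $a$ to $b$ as one side and geodesics $[\ast,a]$, $[\ast,b]$ as the other two. By Lemma~\ref{inpro} its internal point on $[\ast,a]$ sits at distance $(a\,.\,b)_\ast$ from $\ast$, and a standard estimate shows $(a\,.\,b)_\ast \to (x\,.\,y)_\ast = m$ as $T\to\infty$ (up to an additive $O(\delta)$ error, using that inner products with respect to boundary points are defined as limits and agree with finite inner products up to $2\delta$, cf. Definition~4.4 of~\cite{ABC91}).

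Next I would use Lemma~\ref{parallel} to replace the sides $[\ast,a]$ and $[\ast,b]$ of this finite triangle by the rays $r$ and $s$: since $[\ast,a]$ fellow-travels $r$ within $\delta$ on the overlapping range (both are geodesics from $\ast$ toward the same boundary point $x$, with endpoints a bounded distance apart once $T$ is large, by the equivalence-of-rays remark), the internal point on $[\ast,a]$ is within $\delta$ of $r(m')$ where $m' = (a\,.\,b)_\ast$, hence within $2\delta$ or so of $r(m)$. Thin-ness of the finite triangle $\triangle(\ast,a,b)$ then gives a point $z$ on the segment $\ell([-T,T])$ with $d(r(m),z) \le \delta + (\text{error})$ and $d(s(m),z) \le \delta + (\text{error})$; tracking the constants carefully through the two applications of Lemma~\ref{parallel} and Lemma~\ref{inpro}, plus the $O(\delta)$ gap between $(a\,.\,b)_\ast$ and $m$, should land everything inside $5\delta$. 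Finally, for the fellow-traveling tails: parametrize $\ell$ with $\ell(0)=z$; on the sub-triangle beyond the internal points, $f_\triangle$ identifies $\ell(-j)$ with $r(m+j)$ and $\ell(j)$ with $s(m+j)$ for the finite triangle, so $\delta$-thinness gives the bound for all $j$ up to $T-m$, and letting $T\to\infty$ (or choosing $z$ on the full line from the start and re-running the comparison on arbitrarily long initial segments) extends it to all $j \ge 0$.

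The main obstacle I expect is bookkeeping the additive constants so that the final bound is genuinely $5\delta$ rather than some larger multiple of $\delta$: each of the two fellow-traveling comparisons (ray vs.\ segment near $\ast$, and segment of $\ell$ vs.\ the line $\ell$) contributes a $\delta$ or $2\delta$, Lemma~\ref{parallel} already carries a $2\delta$, and the discrepancy between the finite inner product $(a\,.\,b)_\ast$ and the Gromov-boundary inner product $m$ contributes another error of size $2\delta$. One has to be slightly clever — e.g.\ choosing $a,b$ directly on $\ell$ so one of the two comparisons is exact, and using the sharper thin-triangles constant on one side — to compress the accumulated slack down to $5\delta$. A secondary subtlety is that $z$ is asserted to be non-unique, so the statement must be read as "there exists such a $z$," and the tail estimates must be stated for that chosen $z$ with its chosen parametrization of $\ell$, which the argument above produces simultaneously with $z$.
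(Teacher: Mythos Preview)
Your approach is sound but organised differently from the paper's, and the difference matters for exactly the issue you flag at the end.

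The paper does \emph{not} take finite triangles with a side on the given line $\ell$. Instead it forms the triangles $\triangle(\ast, r(n), s(n))$ with third side a geodesic $\alpha_n$ from $r(n)$ to $s(n)$, observes that the insize points $u_n\in r$, $v_n\in s$, $a_n\in\alpha_n$ all lie in a fixed ball (since $d(r(t),s(t))>\delta$ eventually), and then uses local finiteness and an Arzel\`a--Ascoli diagonal argument to extract a genuine bi-infinite geodesic line $\alpha$ through a fixed $a$, with $d(\alpha(-j),r(k+j))\le\delta$ and $d(\alpha(j),s(k+j))\le\delta$ for \emph{all} $j\ge0$, where $k=d(\ast,u)$. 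Only after $\alpha$ is in hand does the paper bring in the arbitrary line $\ell$: any two lines with the same endpoints are Hausdorff distance $\le 2\delta$, so one chooses $z\in\ell$ with $d(z,a)\le 2\delta$ and gets $d(\ell(\pm j),\alpha(\pm j))\le 2\delta$; finally $|k-m|\le 2\delta$ by \cite{ABC91}, Lemma~4.6(4). The three errors $\delta+2\delta+2\delta$ give $5\delta$.

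Your route---triangles $\triangle(\ast,\ell(-T),\ell(T))$ with the $\ell$-segment as a side---trades one comparison for another: you get the internal point on $\ell$ for free, but must compare the finite sides $[\ast,\ell(\pm T)]$ with the rays $r,s$. That comparison works (both are geodesics from $\ast$ toward the same boundary point, so they $\delta$-fellow-travel up to parameter roughly $(\ell(\pm T).x)_\ast$, which tends to infinity), and the constant bookkeeping is actually no worse than you fear; one can land at $4\delta$ or $5\delta$. The genuine subtlety is your last sentence: the internal point $z_T\in\ell$ depends on $T$, so ``letting $T\to\infty$'' does not by itself produce a single $z$ for which the tail estimate holds for all $j$. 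You need to stabilise $z_T$---e.g.\ by local finiteness, passing to a subsequence along which $z_T$ is constant---which is precisely the role the Arzel\`a--Ascoli step plays in the paper's argument. So the two proofs ultimately rely on the same finiteness input; the paper's packaging just makes the ``one $z$ for all $j$'' conclusion immediate.
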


\begin{remark}\label{sub1} 
If a space is $\delta$ hyperbolic, then it is $\delta'$ hyperbolic for any $\delta'\geq \delta$. In order to simplify the implementation of Lemma \ref{Yes}, we replace our hyperbolicity constant $\delta$ by $5\delta$. This simply means that in all applications of Lemma \ref{Yes} we will assume that $d(r(m-j), s(m-j)\leq \delta$ for all $j\geq 0$, $d(r(m),z)\leq \delta$, $d(s(m),z)\leq \delta$, $d(l(-j),r(m+j))\leq \delta$ for all $j\geq 0$ and $d(l(j), s(m+j))\leq \delta$ for all $j\geq 0$. In other words, ideal geodesic triangles of the type considered here are $\delta$-thin.
\end{remark}
\begin{proof}
(of Lemma \ref{Yes}) For every $n\in \mathbb{N}$ let $\alpha_n$ be a geodesic in $X$ with endpoints $r(n)$ and $s(n)$. For every $n$ we define $a_n, u_n,$ and $v_n$ to be the insize points of $\Delta(\ast, r(n),s(n))$ with $a_n\in\alpha_n$, $u_n\in r,$ and $v_n\in s.$ Let $m=(x.y)_{\ast}.$ There is $T>0$ such that for all $t\geq T$, $d(r(t),s(t))>\delta$. We have $d(u_n,\ast)=d(v_n,\ast)\leq T.$ For every $n\in\mathbb{N}$,  $\{a_n,u_n,v_n\}\subset\overline{B}=\overline{B}(\ast,T+\delta).$ 

There are only finitely many vertices in $\overline{B},$ so there is $a\in  \overline{B}$ and a subsequence $\mathcal S_0$ of $(1,2,\ldots)$ such that  $a_n=a\in \overline{B}$ for all $n\in \mathcal S_0$. Passing to subsequences twice more we have a subsequence $\mathcal S_1$ of $\mathcal S_0$ such that $u_n=u$ and $v_n= v$ for all $n\in \mathcal S_1$. Notice that $u$, $v$ and $a$ are the insize points of the geodesic triangle with sides $r|_{[0,n]}$, $s|_{[0,n]}$, $\alpha_n$ for all $n\in \mathcal S_1$. Since $X$ is locally finite, we may construct a line $\alpha$ with $\alpha^-\in x$ and $\alpha^+ \in y$ via an Arzel\`{a}-Ascoli argument. Simply define $\alpha(0)=v$. There is a subsequence $\mathcal S_2$ of $\mathcal S_1$ such that for all $n\in \mathcal S_2$, the vertex of $\alpha_n$ preceding $a$ is the same (call it $\alpha(-1)$) and the vertex of $\alpha_n$  following $a$ is the same (call it $\alpha(1)$). Similarly select a subsequence $\mathcal S_3$ of $\mathcal S_2$ to define $\alpha(-2)$ and $\alpha(2)$. Continuing this in way, define the consecutive vertices of the geodesic line $\alpha$. Notice that if $k=d(\ast, u) (=d(\ast,v))$, then for each $j>0$, $d(\alpha(-j), r(k+j))\leq \delta$ and $d(\alpha(j), s(k+j))\leq \delta$. 

By construction, $d(a,u)\leq\delta$ and $d(a,v)\leq\delta.$ If $\ell$ is any line with $\ell^-\in x$ and $\ell^{+}\in y$ then $\ell$ is contained in the $2\delta$--neighborhood of $\alpha,$ so there is a point $z$ on $\ell$ such that $d(z,a)\leq2\delta.$  Assume $\ell$ is parametrized such that $\ell (0)=z$. Then $d(\ell(j),\alpha(j))\leq 2\delta$ for all $j$. Thus $d(\ell(j),r(k-j))\leq 3\delta$ and $d(\ell(j),s(k+j))\leq 3\delta$ for all $j\geq 0$. In particular, $d(z,u)\leq 3\delta$ and $d(z,v)\leq 3\delta$.  By \cite{ABC91}, Lemma 4.6(4)) $m\leq k\leq m+2\delta$, so that $d(z,r(m))\leq 5\delta$ and $d(z,s(m))\leq 5\delta.$  Finally, $d(\ell(j), r(m-j))\leq 5\delta$ and $d(\ell(j),s(m+j))\leq 5\delta$ for $j\geq 0$.
\end{proof}

\begin{definition}\label{visualm}
Let $X$ be a hyperbolic space with base point $p$. A metric $d_p$ on $\partial X$ is
called a (hyperbolic) {\it visual} metric with {\it parameter} $a>1$ and base point $p$ if there exist constants $k_1, k_2 > 0$ such that $k_1a^{-(x. y)_p} \leq d_p(x,y) \leq k_2a^{-(x.y)_p}$
for all $x,y\in \partial X$.
\end{definition}

\begin{remark} \label{equivar} 
If a group $G$ acts by isometries on the hyperbolic space $X$ then for $x,y\in \partial X$, $(x.y)_p=(gx.gy)_{g(p)}$. In this sense, the inner product is $G$-equivariant on $\partial X$.   If $d_p$ is a visual metric on $\partial X$ so that $k_1a^{-(r. s)_p} \leq d_p(r,s) \leq k_2a^{-(r.s)_p}$ then  for $g\in G$, one can define  $d_{gp}(gx,gy)=d_p(x,y)$. Then $k_1a^{-(gr.gs)_{gp}} \leq d_{gp}(gr,gs) \leq k_2a^{-(gr.gs)_{gp}}$. In this way $d_p$ can be thought of as $G$-equivariant. 
In particular, if $E\subset \partial X$, then the diameter of $E$ with respect to $d_p$ is equal to the diameter of $gE$ with respect to $d_{gp}$. 
\end{remark}

We are interested in the situation where $X$ is a cusped space for a relatively hyperbolic group $(G,\mathcal P)$, $p$ is a vertex  of $X$, and $d_p$ is a visual metric on $\partial X$.  
Since inner products are $G$-equivariant, Proposition 2.26 and Theorem 2.27 of \cite{BS07} (S. Buyalo and V. Schroeder) insure the existence of visual metrics on $\partial X=\partial(G,\mathcal P)$ (via the notion of finite chains of inner products of geodesic rays based at $p$). 

\section{Linearly Connected Metrics}\label{LCM} 
\begin{definition}
A metric $d$ on a space $X$ is {\it linearly connected} if there is a constant $K$ such that for each $x,y\in X$ there is a path of diameter $\leq Kd(x,y)$ connecting $x$ and $y$. 
\end{definition}

If a metric on a space $X$ is linearly connected, then $X$ is locally connected. But even the unit interval $[0,1]$ can have a metric that is not linearly connected. 

\begin{example}\label{E1}
Consider the homeomorphism of $[0,1]\to \mathbb R$ defined by $f(x)=x\sin({1\over x})$ for $x\in (0,1]$ and $f(0)=0$. Let $X$ be the graph of $f$, with metric induced by the standard metric on $\mathbb R^2$. It is straightforward to see that with this metric, $X$  is not linearly connected. Consider the points $x_k={2\over (4k+1)\pi}$ and $y_k={2\over (4k+3)\pi}$. Note that $\sin({1\over x_k})=1$ and $\sin({1\over y_k})=-1$ for all integers $k$. By the triangle inequality (for $k>0$), the distance between $(x_{k+1}, f(x_{k+1}))$ and $ (x_k, f(x_k))$ is less than $2(x_k-x_{k+1})={16\over \pi (4k+1)(4k+5)}$. But any path between these two points must pass through $(y_k, f(y_k))$, and so has diameter  greater than $2f(x_{k+1})=2x_{k+1}={4\over (4k+5)\pi}$. 
\end{example}

\section{Cusped Spaces for Relatively Hyperbolic Groups}\label{Cusped}

D. Groves and J. Manning \cite{GMa08} investigate a locally finite space $X$ derived from a finitely generated group $G$ and a collection ${\bf P}$ of finitely generated subgroups. The following definitions are directly from \cite{GMa08}

\begin{definition}  Let $\Gamma$ be any 1-complex. The combinatorial {\it horoball} based on $\Gamma$,
denoted $\mathcal H(\Gamma)$, is the 2-complex formed as follows:

{\bf A)} $\mathcal H^{(0)} =\Gamma (0) \times (\{0\}\cup \mathbb N)$

{\bf B)} $\mathcal H^{(1)}$ contains the following three types of edges. The first two types are
called horizontal, and the last type is called vertical.

(B1) If $e$ is an edge of $\Gamma$ joining $v$ to $w$ then there is a corresponding edge
$\bar e$ connecting $(v, 0)$ to $(w, 0)$.

(B2) If $k > 0$ and $0 < d_{\Gamma}(v,w) \leq 2^k$, then there is a single edge connecting
$(v, k)$ to $(w, k)$.

(B3) If $k\geq 0$ and $v\in \Gamma^{(0)}$, there is an edge joining $(v,k)$ to $(v,k+1)$.

{\bf C)} $\mathcal H^{(2)}$ contains three kinds of 2-cells:

(C1) If $\gamma \subset  \mathcal  H^{(1)}$ is a circuit composed of three horizontal edges, then there
is a 2-cell (a horizontal triangle) attached along $\gamma$.

(C2) If $\gamma \subset \mathcal H^{(1)}$ is a circuit composed of two horizontal edges and two
vertical edges, then there is a 2-cell (a vertical square) attached along $\gamma$. 

(C3) If $\gamma\subset  \mathcal H^{(1)}$ is a circuit composed of three horizontal edges and two
vertical ones, then there is a 2-cell (a vertical pentagon) attached along $\gamma$, unless $\gamma$ is the boundary of the union of a vertical square and a horizontal triangle.
\end{definition}

\begin{definition} Let $\Gamma$ be a graph and $\mathcal H(\Gamma)$ the associated combinatorial horoball. Define a {\it depth function}
$$\mathcal D : \mathcal H(\Gamma) \to  [0, \infty)$$
which satisfies:

(1) $\mathcal D(x)=0$ if $x\in \Gamma$,

(2) $\mathcal D(x)=k$ if $x$ is a vertex $(v,k)$, and

(3) $\mathcal D$ restricts to an affine function on each 1-cell and on each 2-cell.
\end{definition}

\begin{definition} Let $\Gamma$ be a graph and $\mathcal H = \mathcal H(\Gamma)$ the associated combinatorial horoball. For $n \geq 0$, let $\mathcal H_n \subset \mathcal H$ be the full sub-graph with vertex set $\Gamma ^{(0)} \times \{0,\ldots ,N\}$, so that $\mathcal H_n=\mathcal D^{-1}[0,n]$.  Let $\mathcal H^n=\mathcal D^{-1}[n,\infty)$ and $\mathcal H(n)=\mathcal D^{-1}(n)$.   The set $\mathcal H(n)$ is often called a {\it horosphere} or {\it $n^{th}$ level horosphere}. The set $\mathcal H^m$ is called an {\it $m$-horoball}. 
\end{definition}

\begin{lemma} \label{GM3.10} 
(\cite{GMa08}, Lemma 3.10) Let $\mathcal H(\Gamma)$ be a combinatorial horoball. Suppose that $x, y \in \mathcal H(\Gamma)$ are distinct vertices. Then there is a geodesic $\gamma(x, y) = \gamma(y, x)$ between $x$ and $y$  which consists of at most two vertical segments and a single horizontal segment of length at most 3.

Moreover, any other geodesic between $x$ and $y$ is Hausdorff distance at most 4 from this geodesic.
\end{lemma}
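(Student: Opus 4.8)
The plan is to derive an explicit formula for the distance $d(x,y)$ in $\mathcal H(\Gamma)$ and read the structure of geodesics off it. Write $x=(a,p)$ and $y=(b,q)$; by symmetry we may assume $p\le q$, and put $D=d_\Gamma(a,b)$. If $a=b$ the vertical segment from $(a,p)$ to $(a,q)$ is visibly the unique geodesic, so assume $D\ge 1$. For each integer $j\ge q$ consider the ``$V$-path'' $\sigma_j$ that runs vertically from $(a,p)$ to $(a,j)$, then horizontally at depth $j$ from $(a,j)$ to $(b,j)$ --- which takes $\lceil D/2^j\rceil$ edges, since one horizontal edge at depth $j$ spans $\Gamma$-distance up to $2^j$ and a $\Gamma$-geodesic from $a$ to $b$ can be cut into $\lceil D/2^j\rceil$ such jumps --- then vertically from $(b,j)$ down to $(b,q)$. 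Its length is $g(j):=(j-p)+(j-q)+\lceil D/2^j\rceil$, and reading $\sigma_j$ from $y$ to $x$ gives the same unoriented path, so it can serve as $\gamma(x,y)=\gamma(y,x)$. I claim $d(x,y)=\min_{j\ge q}g(j)$.

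The inequality $d(x,y)\le\min_j g(j)$ is immediate. For the reverse, let $\gamma$ be any edge-path from $x$ to $y$ and let $k$ be the maximum value of the depth function $\mathcal D$ along $\gamma$; since $\mathcal D$ changes by $0$ or $1$ across each edge, $k\ge q\ge p$. The prefix of $\gamma$ from $x$ to its first point of depth $k$ has net depth change $k-p$ and the complementary suffix to $y$ has net depth change of absolute value $k-q$, so $\gamma$ has at least $(k-p)+(k-q)$ vertical edges. Collapsing the vertical edges of $\gamma$ sends each horizontal edge to a pair of $\Gamma$-vertices at distance at most $2^k$ and yields a walk in $\Gamma$ from $a$ to $b$, so $\gamma$ has at least $D/2^k$, hence at least $\lceil D/2^k\rceil$, horizontal edges. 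Therefore the length of $\gamma$ is at least $g(k)\ge\min_j g(j)$, proving the formula. In particular, if $\gamma$ is a geodesic then $g(k)=\min_j g(j)$, i.e. the maximum depth attained by any geodesic between $x$ and $y$ is a minimizer of $g$.

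To obtain the first assertion I would locate a minimizer $j_0$ of $g$ with $\lceil D/2^{j_0}\rceil\le 3$; then $\sigma_{j_0}$ is a geodesic consisting of at most two vertical segments and a single horizontal segment of length at most $3$. This follows from a short monotonicity analysis: $g(j+1)-g(j)=2+\lceil D/2^{j+1}\rceil-\lceil D/2^j\rceil$, and an elementary check shows $\lceil D/2^j\rceil-\lceil D/2^{j+1}\rceil\ge 2$ whenever $\lceil D/2^j\rceil\ge 4$, so $g(j+1)\le g(j)$ as long as the horizontal term of $g(j)$ is at least $4$. Hence, starting from any minimizer of $g$ and increasing $j$ while the horizontal term exceeds $3$, we never increase $g$ and after finitely many steps reach a minimizer $j_0$ with $\lceil D/2^{j_0}\rceil\le 3$.

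Finally, the ``moreover'' clause. The same estimate shows $g(j+1)-g(j)\ge 0$ precisely when $\lceil D/2^j\rceil\le 5$, so every minimizer $j$ of $g$ (other than possibly the boundary value $j=q$) satisfies $2\le\lceil D/2^j\rceil\le 5$; for two such minimizers $j<j'$ one then has $2^{j'-j}=(D/2^j)/(D/2^{j'})<5$, so they differ by at most $2$. Thus any geodesic has maximum depth within $2$ of $j_0$. To conclude, one shows each geodesic $\gamma$ lies in the $4$-neighborhood of $\sigma_{j_0}$ and conversely: the subpath of $\gamma$ from $x$ to its deepest vertex $(v,k)$ is itself a geodesic whose maximum depth is $k$, so by the formula it has length exactly $(k-p)+\lceil d_\Gamma(a,v)/2^k\rceil$, which forces it to consist of $k-p$ vertical edges and at most a bounded number of horizontal ones; comparing it depth-by-depth with the vertical and horizontal pieces of $\sigma_{j_0}$, and using $|k-j_0|\le 2$ together with $\lceil D/2^{j_0}\rceil\le 3$, pins down the Hausdorff distance. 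Extracting the sharp constant $4$ from these coarse comparisons is the main obstacle and is where most of the work lies; the distance formula and the description of its minimizers are what make it tractable.
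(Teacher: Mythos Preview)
The paper does not prove this lemma; it is quoted verbatim from Groves--Manning \cite{GMa08} and used as a black box, so there is no in-paper argument to compare against. Your approach---deriving the explicit formula $d(x,y)=\min_{j\ge q}\bigl((j-p)+(j-q)+\lceil D/2^j\rceil\bigr)$ and then analyzing where the minimum is attained---is the standard one and is essentially how the original Groves--Manning proof proceeds.

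Your treatment of the first assertion is complete and correct: the lower bound on path length via counting vertical and horizontal edges is clean, and the monotonicity step showing one can push the minimizer up until $\lceil D/2^{j_0}\rceil\le 3$ is right. One small remark: in the lower-bound argument you should note explicitly that each horizontal edge at depth $j\le k$ covers $\Gamma$-distance at most $2^j\le 2^k$, so the collapsed walk has at least $\lceil D/2^k\rceil$ horizontal edges; you say this, but the inequality $2^j\le 2^k$ is the reason it works.

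For the ``moreover'' clause you have the right skeleton (bounding the spread of minimizers of $g$, hence the maximum depth of any geodesic, to within $2$ of $j_0$), but as you yourself say, you have not actually extracted the constant~$4$. The sentence ``which forces it to consist of $k-p$ vertical edges and at most a bounded number of horizontal ones'' is true as a count, but does not by itself control the \emph{order} in which vertical and horizontal edges occur along an arbitrary geodesic, and that ordering is what one needs to compare the geodesic depth-by-depth with $\sigma_{j_0}$. So the second half is a plausible outline rather than a proof; to finish it you would need to argue, for each depth $i\le k$, that the $\Gamma$-projection of the geodesic at depth $i$ stays within a controlled $\Gamma$-distance of $a$ (on the $x$-side) or $b$ (on the $y$-side), and then translate that into a Hausdorff bound in $\mathcal H(\Gamma)$.
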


\begin{definition} Let $G$ be a finitely generated group, let ${\bf P} = \{P_1, \ldots , P_n\}$ be a (finite) family of finitely generated subgroups of $G$, and let $S$ be a generating set for $G$ containing generators for each of the $P_i$.   For each $i\in \{1,\ldots ,n\}$, let $T_i$ be a left transversal for $P_i$ (i.e. a collection of representatives for left cosets of $P_i$ in $G$ which contains exactly one element of each left coset).

For each $i$, and each $t \in T_i$, let $\Gamma_{i,t}$  be the full subgraph of the Cayley graph $\Gamma (G,S)$ which contains $tP_i$. Each $\Gamma_{i,t}$ is isomorphic to the Cayley graph of $P_i$ with respect to the generators $P_i \cap  S$. Then define
$$X(G,{\bf P},S) =\Gamma (G,S)\cup (\cup \{\mathcal H(\Gamma_{i,t})^{(1)} |1\leq i\leq n,t\in T_i\}),$$
where the graphs $\Gamma_{i,t} \subset  \Gamma(G,S)$ and $\Gamma_{i,t} \subset  \mathcal H(\Gamma_{i,t})$ are identified in the obvious way.
\end{definition}
The space $X(G,{\bf P}, S)$ is called the {\it cusped} space for  $G$, ${\bf P}$ and $S$. 
The next result shows cusped spaces are fundamentally important spaces.  We prove our results in cusped spaces. 

\begin{theorem} \label{GM3.25} 
(\cite{GMa08}, Theorem 3.25)
Suppose that $G$ is a finitely generated group and ${\bf P}=\{P_1,\ldots, P_n\}$ is a finite collection of finitely generated subgroups of $G$. Let $S$ be a finite generating set for $G$ containing generating sets for the $P_i$.  A  cusped space $X(G,{\bf P},S)$ is hyperbolic if and only if  $G$ is hyperbolic with respect to ${\bf P}$.
\end{theorem}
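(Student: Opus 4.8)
The plan is to fix, once and for all, one of the classical definitions of ``$G$ is hyperbolic with respect to ${\bf P}$'' and prove it equivalent to hyperbolicity of the cusped space; since those definitions (Bowditch's via fineness, Osin's via a linear relative isoperimetric inequality, Farb's below) agree in the present setting, any one will do. I would use Farb's: the \emph{coned-off Cayley graph} $\hat\Gamma=\hat\Gamma(G,{\bf P},S)$ — formed from $\Gamma=\Gamma(G,S)$ by adjoining, for each coset $gP_i$, a vertex $c(gP_i)$ joined by an edge to every vertex of $gP_i$ — is $\delta$-hyperbolic, and the pair $(G,{\bf P})$ satisfies the Bounded Coset Penetration (BCP) property (throughout, $d_G$ denotes the word metric of $G$). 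One also records the elementary fact that changing $S$ replaces $X$ by a $G$-equivariantly quasi-isometric space, so hyperbolicity of $X$ does not depend on $S$. The bridge between the two pictures is the collapse map $\rho\colon X\to\hat\Gamma$ that is the identity on $\Gamma\subset X$ and sends every vertex $(v,k)$ with $k\ge 1$ of a horoball $\mathcal H(\Gamma_{i,t})$ to the cone point $c(tP_i)$: each of the edge types (B1), (B2), (B3) has endpoints whose $\hat\Gamma$-images lie at distance $\le 1$, so $\rho$ is $1$-Lipschitz on vertices. The other workhorse is Lemma \ref{GM3.10}: an $X$-geodesic restricted to any horoball it meets is within Hausdorff distance $4$ of two vertical segments and a horizontal segment of length $\le 3$, so every $X$-geodesic is a concatenation of ``$\Gamma$-pieces'' and up-over-down ``horoball excursions,'' and one checks, using the efficiency of $X$-geodesics, that $\rho$ carries an $X$-geodesic to a quasigeodesic of $\hat\Gamma$ with uniform constants.

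For the implication ``$\hat\Gamma$ hyperbolic and BCP $\Rightarrow$ $X$ hyperbolic'' I would verify directly that geodesic triangles in $X$ are uniformly thin. Given such a triangle, first move its three vertices a bounded amount into $\Gamma$, using that near a deep vertex the two emanating sides coarsely share a vertical segment (again Lemma \ref{GM3.10}). Applying $\rho$ yields a quasigeodesic triangle in the hyperbolic graph $\hat\Gamma$, hence a uniformly thin one; this already lines up the $\Gamma$-pieces of the three sides and shows that whenever two sides have excursions through a common horoball their projections pass close to the same cone point. BCP then upgrades $\hat\Gamma$-closeness to $X$-closeness: it forces the entry vertices of the two excursions into that common coset to be $d_G$-close, and likewise the exit vertices, so the corresponding vertical segments — which stand over $d_G$-close base vertices and rise to comparable depths — are $X$-close, as are the short horizontal tops; the $\Gamma$-pieces are handled the same way, applying BCP to the two relative geodesics at hand. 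Assembling these bounds produces a single thinness constant for $X$.

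For the converse ``$X$ hyperbolic $\Rightarrow$ $\hat\Gamma$ hyperbolic and BCP,'' one first shows that the horoballs are uniformly quasiconvex in $X$ (a consequence of Lemma \ref{GM3.10}) and invokes the standard fact that coning off a uniformly quasiconvex family of subsets of a hyperbolic space again yields a hyperbolic space; since the result of that coning is patently quasi-isometric to $\hat\Gamma$, the graph $\hat\Gamma$ is hyperbolic. The remaining step — which I expect to be the crux of the whole theorem — is to extract BCP from hyperbolicity of $X$. Given relative quasigeodesics $\gamma,\gamma'$ in $\hat\Gamma$ with common endpoints that both penetrate a coset $gP_i$, lift them to $X$-quasigeodesics $\tilde\gamma,\tilde\gamma'$ by replacing each cone-point excursion with the up-over-down path in the relevant horoball, whose depth is comparable to $\log_2$ of the $d_G$-distance between its entry and exit vertices; stability of quasigeodesics in the hyperbolic space $X$ then makes $\tilde\gamma$ and $\tilde\gamma'$ uniformly Hausdorff-close. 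The delicate point is that, inside a horoball, the horizontal distance at level $k$ between vertices $v,w$ is only about $\max\{0,\log_2 d_G(v,w)-k\}$ up to an additive constant, so ``close in $X$'' does not by itself imply ``$d_G$-close'' for level-$0$ coset vertices; the argument must instead exploit the fellow-travelling of the two \emph{vertical} subsegments near the bottoms of the excursions, where the level is bounded, to conclude that the entry (resp.\ exit) vertices are $d_G$-close, together with a parallel estimate in the case where $\gamma'$ misses the coset altogether. Combining this with hyperbolicity of $\hat\Gamma$ and the $S$-independence remark completes the proof of Theorem \ref{GM3.25}.
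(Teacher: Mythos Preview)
The paper does not prove this theorem at all: it is stated purely as a citation of \cite{GMa08}, Theorem 3.25, and is used as a black box. There is therefore no proof in the paper to compare your proposal against.

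Your sketch is a reasonable outline of a proof in the spirit of the original Groves--Manning argument (collapse to the coned-off Cayley graph, Lemma \ref{GM3.10} controlling horoball geodesics, and a BCP-type analysis), but none of that work is needed or reproduced here. For the purposes of this paper the correct ``proof'' is simply the reference.
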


Assume $G$ is finitely presented and hyperbolic with respect to the subgroups ${\bf P}=\{P_1,\ldots, P_n\}$ and $S$ is a finite generating set for $G$ containing generating sets for the $P_i$. For $g\in G$ and $i\in\{1,\ldots, n\}$ we call $gP_i$ a {\it peripheral coset} in a cusped space. 
The isometric action of $G$ on $\Gamma(G,S)$ extends to an isometric action of $G$ on $X(G, {\bf P},S)$. This action is depth preserving. 


\begin{lemma} \label{geo} (\cite{GMa08}, Lemma 3.26) 
If a cusped space $X$ is $\delta$-hyperbolic, then the $m$-horoballs of $X$ are convex for all $m\geq \delta$. In particular, If $H$ is a horoball in $X$, then $H^\delta$ is convex. Given two points $a,b\in H^\delta$, there is a geodesic connecting $a$ and $b$ of the form $(\alpha, \tau,\beta)$ where $\alpha$ and $\beta$ are vertical and $\tau$ has length $\leq 3$. 
\end{lemma}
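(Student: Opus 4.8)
Since the second assertion is the special case $m=\delta$ of the first applied to a single horoball, and the third (as I explain at the end) follows formally from the first together with Lemma \ref{GM3.10}, the plan is to prove just the first: for a horoball $H$ of $X$ and $m\ge\delta$, the set $H^m\subseteq X$ is convex. Concretely, I want to fix vertices $a,b\in H^m$ and an $X$-geodesic $\gamma$ from $a$ to $b$ and show $\mathcal D\ge m$ everywhere on $\gamma$ (the case of non-vertex endpoints then follows by approximation; and since $m\ge\delta>0$, $\mathcal D\ge m$ along $\gamma$ already forces $\gamma$ to stay inside $H$). Two facts I will lean on: $\mathcal D$ is $1$-Lipschitz on $X$ (it is affine on each cell, each cell has diameter $1$, and its vertex-values across an edge differ by at most $1$); and from any vertex of $H$ the ``straight up'' vertical ray is a genuine geodesic ray, all of these converging to a single boundary point $\xi_H$.

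First I would run the standard coarse estimate. Suppose some $p\in\gamma$ has $\mathcal D(p)<m$; replacing $\gamma$ by a maximal subsegment on which the depth is at most $m$, I may assume $\mathcal D(a)=\mathcal D(b)=m$ and $\mathcal D<m$ in the interior. Take vertical rays $r_a,r_b$ straight up from $a$ and $b$; for $N$ large, $r_a(N)$ and $r_b(N)$ sit at depth $m+N$ and a bounded distance apart, so $\gamma$ together with $r_a|_{[0,N]}$, a short ``cap'', and $r_b|_{[0,N]}$ is a geodesic quadrilateral, hence $2\delta$-thin. A point $x\in\gamma$ with $\mathcal D(x)<m$ must then lie within $2\delta$ of one of the other three sides; it cannot be near the cap (which lies at depth $\approx m+N$) once $N$ is large, and if it is within $2\delta$ of $r_a(t)$ then the $1$-Lipschitz property of $\mathcal D$ gives $t\le 2\delta$, so $x$ is within $4\delta$ of $a$; symmetrically for $b$. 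Thus $\gamma$ dips below depth $m$ only within $4\delta$ of an endpoint, and in particular $d(a,b)=|\gamma|$ is bounded in terms of $\delta$. (If $a$ and $b$ happened to lie over different horoballs along $\gamma$, one replaces the quadrilateral by a polygon using the two vertical rays and a short connecting geodesic high in the two horoballs; the same conclusion follows.)

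The hard part will be to upgrade ``within $4\delta$'' to ``nowhere''---that is, to pin the constant down to exactly $\delta$---and to exclude the possibility that $\gamma$ runs all the way down to the depth-$0$ base of $H$ and re-enters. For the first point I would feed in the exact combinatorics of Lemma \ref{GM3.10}: a vertical--horizontal--vertical geodesic inside a horoball between two vertices at depths $\ge m$ never descends below depth $m$ (the horizontal hop, of length $\le 3$, occurs at a depth no smaller than the lesser of the two endpoint depths), and every horoball geodesic between such vertices is Hausdorff-close to one of this form; combined with the coarse estimate this should force $\gamma\subseteq H$, after which $\gamma$ is itself a horoball geodesic and the combinatorial bound applies to $\gamma$ directly, giving $\mathcal D\ge m$ on $\gamma$. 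Reconciling the coarse $O(\delta)$ thin-triangle bounds with this sharp combinatorial bound, and ruling out escape from $H$, is where I expect the real work (and the real use of hyperbolicity) to be: indeed, for a non-hyperbolic cusped space, where a peripheral subgroup can be exponentially distorted in $G$, an $X$-geodesic between deep points can short-cut through the Cayley graph and the lemma is simply false, so any proof must exploit hyperbolicity of $X$ beyond the combinatorics of a single horoball.

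Finally, granting convexity, the third assertion is immediate: for $a,b\in H^\delta$, the $X$-geodesic between them lies in $H^\delta\subseteq H$, so $d_X(a,b)=d_H(a,b)$; by Lemma \ref{GM3.10} there is a horoball geodesic $\sigma=(\alpha,\tau,\beta)$ from $a$ to $b$ with $\alpha,\beta$ vertical and $|\tau|\le 3$, and since $|\sigma|=d_H(a,b)=d_X(a,b)$ this $\sigma$ is also an $X$-geodesic; because $\sigma$ only ascends away from $a$ and $b$ it stays at depth $\ge\min\{\mathcal D(a),\mathcal D(b)\}\ge\delta$, which is exactly the asserted form.
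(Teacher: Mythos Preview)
The paper does not prove this lemma at all: it is simply quoted from Groves--Manning \cite{GMa08}, Lemma~3.26, and no argument is supplied in the present paper. So there is no ``paper's own proof'' to compare against; you are attempting to reprove a result that the authors are content to cite.

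As for your sketch itself: the coarse quadrilateral argument is standard and correct up to the conclusion that any sub-arc of $\gamma$ lying strictly below depth $m$ has length bounded by a multiple of $\delta$. But, as you yourself flag, this is not the statement: you need the exact threshold $m\ge\delta$, and you need to rule out the geodesic exiting $H$ through the base. Your proposed upgrade---feed in Lemma~\ref{GM3.10} to conclude $\gamma\subset H$, then apply the combinatorics---does not quite close the loop as written. Lemma~\ref{GM3.10} is a statement about geodesics \emph{of the horoball}, not of $X$; to invoke it you first need to know that $d_X(a,b)=d_H(a,b)$, i.e.\ that the horoball is isometrically embedded in $X$, and this is exactly the content you are trying to establish (or at least a close cousin of it). The Groves--Manning argument handles this by a direct analysis of how $X$-geodesics interact with the horoball levels, using the explicit combinatorics of the construction rather than only thin-polygon estimates; the sharp constant $\delta$ (as opposed to $O(\delta)$) comes from that analysis, not from generic hyperbolicity. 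Your final paragraph, deducing the $(\alpha,\tau,\beta)$ form from convexity plus Lemma~\ref{GM3.10}, is correct once convexity is in hand.
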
 



\begin{lemma}\label{tight}
(\cite{MS18},Lemma 5.1)
Suppose $t_1$ and $t_2$ are vertices of depth $\bar d\geq \delta$ in a horoball $H$ of $X$. Then for each $i\in \{1,2\}$, there is a geodesic $\gamma_i$  from $\ast$ to $t_i$ such that  $\gamma_i$ has the form $(\eta_i, \alpha_i,\tau_i, \beta_i)$, where the end point $x_i$ of $\eta_i$ is the first point of $\gamma_i$ in $H(\bar d)$, $\alpha_i$ and $\beta_i$ are vertical and of the same length in $H^{\bar d}$ and $\tau_i$ is horizontal of length $\leq 3$. Furthermore $d(x_1,x_2)\leq 2\delta +1$. 
\end{lemma}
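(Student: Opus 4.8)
The plan is to build each geodesic $\gamma_i$ from $\ast$ to $t_i$ by first running along a geodesic out to the horosphere $H(\bar d)$ and then moving inside the $\bar d$-horoball, using the convexity (Lemma \ref{geo}) of horoballs of depth $\geq \delta$. First I would pick an arbitrary geodesic $\sigma_i$ from $\ast$ to $t_i$ and let $x_i$ be its last point lying on $H(\bar d)$ (equivalently, the first point after which $\sigma_i$ stays in $H^{\bar d}$; if $\sigma_i$ never leaves $H^{\bar d}$ we simply take $x_i$ to be its first vertex, which is at depth $\geq \bar d$). The subpath from $x_i$ to $t_i$ lies in $H^{\bar d}$, which is convex by Lemma \ref{geo}, so we may replace it by a geodesic of the standard form $(\alpha_i', \tau_i', \beta_i')$ with $\alpha_i', \beta_i'$ vertical and $\tau_i'$ horizontal of length $\leq 3$. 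Since both $x_i$ and $t_i$ sit at depth exactly $\bar d$ on the horosphere, the two vertical segments $\alpha_i'$ (descending from depth $\bar d$) and $\beta_i'$ (ascending back to depth $\bar d$) must have the same length; adjusting the form of the geodesic inside the horoball (Lemma \ref{GM3.10}) lets me arrange $|\alpha_i'| = |\beta_i'|$. Prepending the initial segment $\eta_i$ of $\sigma_i$ from $\ast$ to $x_i$ gives $\gamma_i = (\eta_i, \alpha_i, \tau_i, \beta_i)$ of the asserted shape; this is still a geodesic since we only replaced a geodesic subpath by another geodesic with the same endpoints.

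The remaining point is the estimate $d(x_1, x_2) \leq 2\delta + 1$. Here I would consider the geodesic triangle (or rather the geodesic bigon-like configuration) with vertices $\ast, t_1, t_2$, together with a geodesic $[t_1, t_2]$. Because $t_1, t_2$ both lie in $H^{\bar d}$ with $\bar d \geq \delta$, convexity forces $[t_1,t_2]$ into $H^{\delta}$, and in fact its shape from Lemma \ref{geo} keeps it well inside the horoball; more usefully, $[t_1,t_2]$ stays at depth $\geq \bar d$ except possibly near its endpoints — but its endpoints are at depth exactly $\bar d$, so all of $[t_1,t_2]$ has depth $\geq \bar d$. Now apply $\delta$-thinness to the triangle $\triangle(\ast, t_1, t_2)$: the point $x_1$ on $[\ast, t_1]$ is at depth $\bar d$, and on the side $[\ast,t_1]$ the portion at depth $\geq \bar d$ is exactly the subpath from $x_1$ onward (depth is "affine-then-monotone" along a geodesic leaving $\ast$ and entering $H$ — once it enters $H^{\bar d}$ it cannot return to depth $< \bar d$ without backtracking, contradicting geodesy in the horoball via Lemma \ref{GM3.10}). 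By thinness, $x_1$ is within $\delta$ of a point $y$ on $[\ast,t_2] \cup [t_1,t_2]$; since $x_1$ has depth $\bar d$ and depths change by at most $1$ per edge, $y$ has depth $\geq \bar d - \delta$. One then checks $y$ cannot be on the deep interior of $[t_1,t_2]$ unless it is within $\delta$ of $x_2$ as well, and if $y \in [\ast,t_2]$ then $y$ lies within distance $1$ of the depth-$\bar d$ entry point $x_2$ (again by monotonicity of depth along $[\ast,t_2]$ past its entry into $H^{\bar d}$), giving $d(x_1,x_2) \leq d(x_1,y) + d(y,x_2) \leq \delta + (\delta + 1) = 2\delta + 1$ after tracking the constant carefully.

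The main obstacle is this last step: making the "depth is monotone once the geodesic enters the horoball" argument precise and tracking the $+1$ (rather than a larger additive constant) through the thin-triangle comparison. The subtlety is that $x_1$ and $x_2$ may be synchronized at different distances from $\ast$ along their respective sides, so one cannot simply compare $\sigma_1$ and $\sigma_2$ at equal parameter values; instead one must use that both $x_1,x_2$ are characterized intrinsically as the depth-$\bar d$ entry points and that the thin-triangle fellow-traveling point $y$ inherits a depth lower bound forcing it to lie near the corresponding entry point on whichever side it falls. Once the geometry of "where can a depth-$\bar d$ point on one side be close to" is pinned down using Lemma \ref{GM3.10} and Lemma \ref{geo}, the inequality follows; I expect the clean constant $2\delta+1$ comes out by noting the horizontal move at the relevant horosphere level costs at most $1$ edge for points that are $\delta$-close after the comparison.
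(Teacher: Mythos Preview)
The paper does not prove this lemma; it is quoted from \cite{MS18} (Lemma~5.1), so there is no in-paper argument to compare yours against.

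On your sketch: the construction of each $\gamma_i$ is fine once a small slip is corrected --- you want the \emph{first} point of $\sigma_i$ in $H(\bar d)$, not the last (which would be $t_i$ itself); your parenthetical ``first point after which $\sigma_i$ stays in $H^{\bar d}$'' is the correct notion, and convexity of $H^{\bar d}$ (Lemma~\ref{geo}) makes this coincide with the first entry into $H(\bar d)$. The substantive gap is exactly where you locate it: the bound $d(x_1,x_2)\le 2\delta+1$. Your assertion ``if $y\in[\ast,t_2]$ then $y$ lies within distance $1$ of $x_2$'' is not justified by the depth estimate alone --- knowing only that $y$ has depth $\ge \bar d-\delta$ does not force $d(y,x_2)$ to be small, and the point $y$ produced by thin triangles could even lie past $x_2$ on the ascending vertical segment of $\gamma_2$, near $t_2$. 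To extract the sharp constant one must exploit the precise combinatorial form of geodesics inside the convex horoball $H^{\delta}$ (Lemma~\ref{GM3.10}) and track where the internal points of $\triangle(\ast,t_1,t_2)$ fall relative to $x_1,x_2$, rather than apply $\delta$-thinness generically. Your diagnosis of the obstacle is accurate, but the argument as written does not close it.
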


Let $H$ be a horoball of $X$ and $z$ a closest point of $H(\delta)$ to $\ast$. Lemma \ref{tight} implies that if $\gamma$ is a geodesic from $\ast$ to a point of $H^\delta$, then the first point of $\gamma$ in $H(\delta)$ is within $2\delta +1$ of $z$. 

For the remainder of the section, $(G,{\bf P})$ is relatively hyperbolic with cusped space $X$ and $C\in {\bf P}$. 

\begin{lemma} \label{close} 
Let $g$ be an element of $G$ and $q$ a closest point of $gC$ to $\ast$. If $\psi$ is a geodesic from $\ast$ to $gC$ that meets $gC$ only in its terminal point, then the terminal point of $\psi$ is within $6\delta+4$ of $q$. 
\end{lemma}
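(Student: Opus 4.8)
The plan is to leverage the combinatorial horoball structure together with Lemma \ref{tight}. Write $H = \mathcal{H}(\Gamma_{i,t})$ for the horoball containing the peripheral coset $gC$ (so $gC = H(0) \cap \Gamma$, the level-zero horosphere of $H$). Let $z$ be a closest point of the horosphere $H(\delta)$ to $\ast$. First I would locate a natural reference point on $gC$: namely, let $q'$ be the vertex of $gC$ directly below $z$, i.e. the unique vertex of $gC$ with $(q',\delta) = z$ in the horoball coordinates; then the vertical segment from $z$ to $q'$ has length exactly $\delta$, so $d(\ast, q') \le d(\ast, z) + \delta$, and this vertical segment followed by any geodesic $\ast$-to-$z$ realizing $d(\ast,z)$ is a geodesic (since vertical descent into a horoball at minimal horospherical distance cannot be shortcut — this follows from Lemma \ref{GM3.10} applied inside $H$). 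Hence $q'$ is a near-closest point of $gC$ to $\ast$, and in fact $d(\ast,q') \le d(\ast, q) + (\text{small constant})$; more carefully, I would argue $d(\ast, q') = d(\ast, z)+\delta$ and $d(\ast, z) \le d(\ast,q)$ since $q$ at depth $0$ forces any $\ast$-to-$q$ geodesic to pass through $H(\delta)$ whenever it enters $H^\delta$, so the relevant distances are pinned down up to additive $\delta$.

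Next I would bound $d(q, q')$. Both $q$ and $q'$ lie in $gC = H(0)$. Since $q$ is a genuine closest point of $gC$ to $\ast$ and $q'$ is within an additive constant of closest, the two $\ast$-to-$q$ and $\ast$-to-$q'$ geodesics both enter $H$ and, by Lemma \ref{tight} (applied with target depth $\bar d = \delta$, or directly by the remark following Lemma \ref{tight}), the first points $x_1, x_2$ where these geodesics meet $H(\delta)$ satisfy $d(x_1, x_2) \le 2\delta + 1$; moreover each $x_j$ is within $2\delta+1$ of $z$. From $x_j$ the geodesic continues into $H^\delta$ down to a depth-$0$ point, and by Lemma \ref{GM3.10} the portion inside $H$ is (up to Hausdorff distance $4$) two vertical segments and a horizontal segment of length $\le 3$; comparing the two descents from the two nearby entry points then gives a bound of the form $d(q, q') \le$ (a few)$\cdot \delta + $ constant.

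The step I expect to be the main obstacle is the bookkeeping that turns "both descents start within $2\delta+1$ of $z$ and go straight down" into the clean numerical bound $6\delta + 4$: one must be careful that the hypothesis on $\psi$ — that it meets $gC$ \emph{only} in its terminal point — is exactly what forces $\psi$ to descend monotonically to depth $0$ at the end (it cannot wander along $gC$), so its last entry into $H(\delta)$ is within $2\delta+1$ of $z$ and the remaining vertical drop lands within $2\delta+1$ (horizontally, via the length-$\le 3$ horizontal jump plus Hausdorff slack $4$ from Lemma \ref{GM3.10}, absorbed appropriately) of $q'$, while $q'$ itself is within $2\delta+1$ of the analogous point for a true closest geodesic. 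Summing the contributions $(2\delta+1) + (2\delta+1) + (2\delta+2)$ and collecting constants yields $d(q,q') \le 6\delta + 4$. I would then note that $q'$ can be taken to be any fixed closest-type point, so replacing $q'$ by $q$ throughout (they differ by $\le 6\delta+4$ already, or one sharpens the estimate) gives the stated conclusion; the cleanest writeup is probably to prove directly that the terminal point of $\psi$ is within $6\delta+4$ of $q$ by running the two-geodesic comparison of Lemma \ref{tight} with one geodesic being $\psi$ and the other a true closest geodesic to $q$.
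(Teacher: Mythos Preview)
Your overall strategy—relating both the terminal point $a$ of $\psi$ and the closest point $q$ to the closest point $z$ of $H(\delta)$ via Lemma~\ref{tight}—is the right framework, and it is essentially how the paper proceeds. However, there is a genuine gap in your execution.

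The issue is geometric. Since $\psi$ meets $gC = H(0)$ only at its endpoint $a$, and since the only way into the horoball $H$ from the rest of $X$ is through $H(0)$, the geodesic $\psi$ never enters $H$ at positive depth at all. In particular there is no ``last entry of $\psi$ into $H(\delta)$,'' and your picture of $\psi$ ``descending monotonically to depth $0$'' from inside the horoball is backwards. Consequently Lemma~\ref{tight} (which concerns geodesics to points of depth $\geq \delta$) does not apply to $\psi$; nor can you simply append a vertical segment $\lambda$ from $a$ to $b \in H(\delta)$ and invoke the remark following Lemma~\ref{tight}, because the concatenation $\psi \cdot \lambda$ need not be geodesic.

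The paper bridges this by taking a genuine Lemma~\ref{tight} geodesic $(\eta, \alpha, \tau, \beta)$ from $\ast$ to $b$ (so the endpoint $c$ of $\eta$ in $H(\delta)$ satisfies $d(z,c) \le 2\delta+1$) and running a thin-triangle argument on the triangle with sides $\psi$, $\lambda$, and $(\eta, \alpha, \tau, \beta)$. Since $|\lambda|=\delta$, the internal point on the third side is at distance at most $\delta$ from $b$; if $|\beta| > \delta$ this point lies on $\beta$, forcing the initial point of $\beta$---a vertex at $H$-depth $\delta + |\beta| > 2\delta$---to lie within $\delta$ of some point of $\psi$. But $\psi$ never enters $H$ at positive depth, so this is impossible. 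Hence $|\alpha| = |\beta| \le \delta$, giving $d(c,b) \le 2\delta + 3$, and the chain
\[
d(q,a) \le d(q,z)+d(z,c)+d(c,b)+d(b,a) \le \delta + (2\delta+1) + (2\delta+3) + \delta = 6\delta+4
\]
closes. This thin-triangle step---not bookkeeping---is the heart of the argument, and it is what your sketch is missing.
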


\begin{proof}
Let $H$ be the horoball for $gC(=H(0))$ and $z$ the vertex of $H(q)$ of vertical distance $\delta$ from $q$.
Note that $z$ is a closest point of $H(\delta)$ to $\ast$. Let the end point of $\psi$ be $a$. Let $\lambda$ be a vertical geodesic from $a$ to $b\in H(\delta)$. Let $(\eta,\alpha, \tau, \beta)$ be a geodesic (as in Lemma \ref{tight}) from $\ast$ to $b$. Let the end point of $\eta$ be $c$. By Lemma \ref{tight}, $d(z, c)\leq 2\delta+1$. Since $d(a,b)=\delta$, it suffices to show $|\beta| (=|\alpha|)\leq \delta$. (See Figure 3)

\vspace {.4in}
\vbox to 3in{\vspace {-2in} \hspace {-1.95in}
\includegraphics[scale=1]{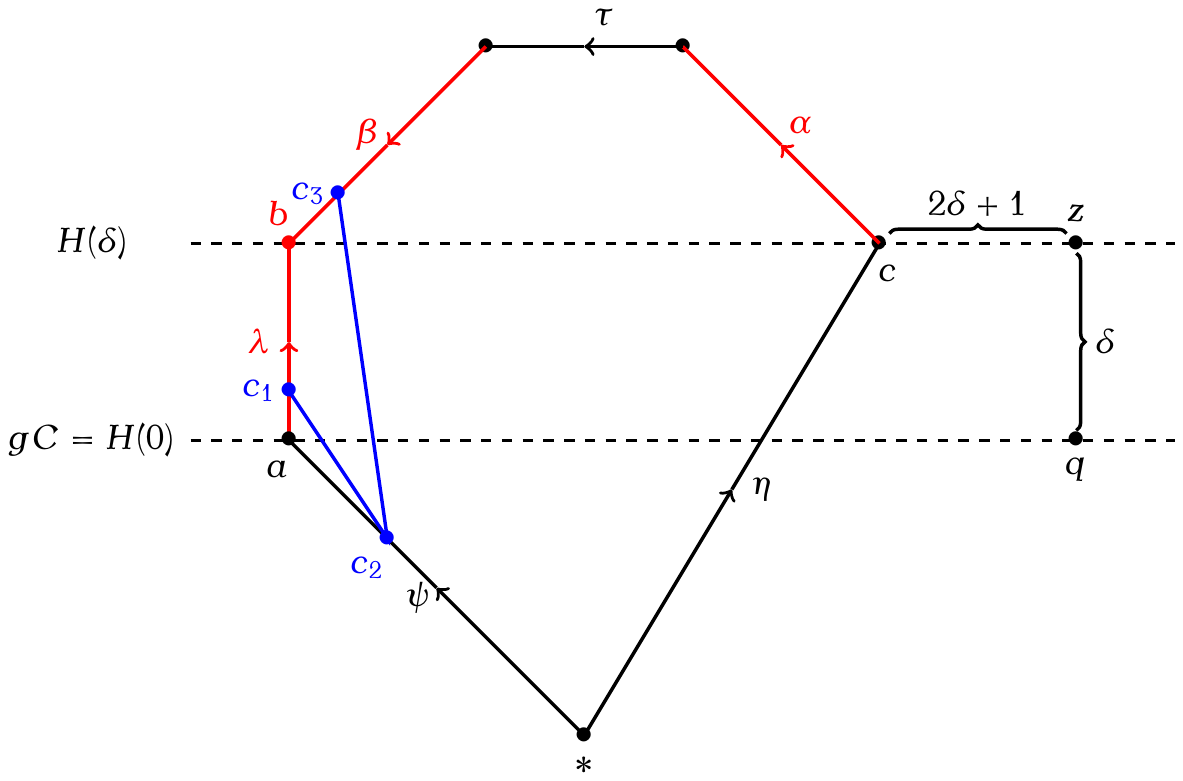}
\vss }
\vspace{-.1in}

\centerline{Figure 3}

\medskip

Consider the geodesic triangle with sides $\psi$, $\lambda$ and $(\eta, \alpha, \tau,\beta)$, with internal points $c_1$ on $\lambda$, $c_2$ on $\psi$ and $c_3$ on $(\eta, \alpha, \tau,\beta)$. If $|\beta|>\delta$, then the internal point $c_3$  must be on $\beta$ since $(\beta,\tau^{-1})$ is geodesic (vertical). But then the initial point of $\beta$ is within $\delta$ of a point of $\psi$, which is impossible.  Instead $|\beta|=|\alpha|\leq \delta$ and: 
$$d(q,a)\leq d(q,z)+d(z,c)+d(c,b)+d(b,a)\leq \delta+(2\delta+1)+(2\delta+3)+\delta.$$
\end{proof}

\begin{lemma} \label{QC} 
The horoballs of $X$ are quasi-convex. In fact, if $N\geq 0$ and $a_1$ and $a_2$ are vertices of $X$, both within $N$ of the horoball for $gC$  for some $g\in G$, then each point of a geodesic in $X$ connecting $a_1$ and $a_2$ is within $N+2\delta$ of a vertex of the horoball for $gC$. 
\end{lemma}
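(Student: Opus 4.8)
The plan is a standard ``sandwich'' argument: compare a geodesic $\sigma=[a_1,a_2]$ with a path running through the horoball $H$ on $gC$. Choose vertices $b_1,b_2\in H$ with $d(a_i,b_i)\le N$ and fix geodesics $\alpha_i=[a_i,b_i]$ and $\beta=[b_1,b_2]$. Split the geodesic quadrilateral $a_1,a_2,b_2,b_1$ into $\triangle(a_1,a_2,b_1)$ and $\triangle(b_1,a_2,b_2)$ along the diagonal $[b_1,a_2]$ and apply $\delta$-thinness to each: a point $x\in\sigma$ is within $\delta$ of $\alpha_1\cup[b_1,a_2]$, and each point of $[b_1,a_2]$ is within $\delta$ of $\beta\cup\alpha_2$, so $x$ is within $2\delta$ of $\alpha_1\cup\alpha_2\cup\beta$. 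Every point of $\alpha_i$ is within $N$ of the vertex $b_i\in H$ (since $|\alpha_i|\le N$), so a point of $\sigma$ lying within $2\delta$ of $\alpha_1\cup\alpha_2$ is within $N+2\delta$ of a vertex of $H$, as desired. Hence the whole statement reduces to showing that $\beta$ --- a geodesic joining two vertices of $H$ --- stays inside $H$ (or at least within $\delta$ of it), for then the remaining points of $\sigma$ are within $2\delta$ (resp.\ $3\delta$) of $H$ as well.

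The crux, and the one place real care is needed, is therefore this: \emph{a geodesic of $X$ joining two vertices of a horoball remains in a controlled neighbourhood of that horoball}. One cannot simply assert it lies inside $H$: the full combinatorial horoball $H=\mathcal D^{-1}[0,\infty)$ is not convex in $X$ --- distortion of $C$ in $G$ lets a geodesic between two level-$0$ vertices of $H$ dip out into the Cayley graph --- and Lemma~\ref{geo} only furnishes convexity of the deep sub-horoballs $\mathcal D^{-1}[m,\infty)$ with $m\ge\delta$. There are two ways around this. (i) Invoke Lemma~\ref{GM3.10}: there is a geodesic of $X$ from $b_1$ to $b_2$ lying in $H$, built from two vertical segments and a horizontal segment of length $\le 3$, and any other geodesic with the same endpoints --- in particular $\beta$ --- is Hausdorff distance $\le 4$ from it; since geodesic bigons in a $\delta$-hyperbolic space are $\delta$-thin, $\beta$ in fact stays within $\delta$ of $H$. (ii) Push $b_1,b_2$ vertically up to depth $\delta$ into $\mathcal D^{-1}[\delta,\infty)$, where Lemma~\ref{geo} gives genuine convexity, and transfer back via one further thin quadrilateral. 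Either way one confines $\beta$ to a bounded neighbourhood of $H$, and feeding this into the bookkeeping above gives every point of $\sigma$ within $N+2\delta$ of a vertex of $H$ (with a little attention to the additive constants).

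I expect that non-convexity of $H$ near depth $0$ is the only genuine obstacle: a soft hyperbolicity argument alone will not quantify how far a geodesic between horoball points may stray, and one must use the combinatorial geometry of the horoball (Lemma~\ref{GM3.10}) or convexity of $\mathcal D^{-1}[\delta,\infty)$ (Lemma~\ref{geo}). A conceptually cleaner --- though constant-heavier --- alternative bypasses $\beta$ altogether: let $\xi\in\partial X$ be the parabolic point that every vertical ray of $H$ converges to; the geodesic ray from $a_i$ to $\xi$ fellow-travels the vertical ray from $b_i$ (which lies in $H$), hence stays within $N+O(\delta)$ of $H$, while the ideal triangle on $a_1,a_2,\xi$ is $O(\delta)$-thin --- by an argument parallel to Lemma~\ref{Yes}, or directly since $(a_1.a_2)_x=0$ forces $\min\{(a_1.\xi)_x,(a_2.\xi)_x\}\le\delta$ for $x\in\sigma$. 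This again puts $\sigma$ in an $O(\delta)$-neighbourhood of the two rays, hence of $H$.
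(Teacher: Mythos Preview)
Your option (ii) is exactly what the paper does. The paper goes straight from $a_i$ to a point $b_i\in H(\delta)$ (distance $\le N+\delta$), uses the convexity of $H^\delta$ from Lemma~\ref{geo} to get $[b_1,b_2]\subset H^\delta$, and then compares $[a_1,a_2]$ with $[b_1,b_2]$ via the quadrilateral fellow-travelling Lemma~\ref{parallel} rather than splitting into two triangles by hand. So the only difference is packaging: you first land in $H$ and then push to depth $\delta$ as a second step, whereas the paper lands in $H(\delta)$ in one step and invokes the prepackaged Lemma~\ref{parallel} (which is itself proved by the same two-triangle decomposition you wrote out). Your bookkeeping gives the right shape of bound; the paper's version tracks the constants slightly more tightly.

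One caution about your option (i): Lemma~\ref{GM3.10} is a statement about geodesics in the horoball \emph{with its intrinsic metric}, not about $X$-geodesics. Since $C$ may be distorted in $G$, the vertical--horizontal--vertical path it produces need not be a geodesic of $X$ when the endpoints sit at small depth, so you cannot directly run a thin-bigon argument against $\beta$. This is precisely why the paper (and your option (ii)) retreats to depth $\delta$, where Lemma~\ref{geo} guarantees that horoball geodesics are honest $X$-geodesics. Your third, ideal-triangle approach via the parabolic point is valid but, as you note, produces worse constants and is more work than is needed here.
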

\begin{proof}
Let $H$ be the horoball for $gC$ and $\gamma_1'$ be a geodesic from $a_1$ to a closest point of $gC$. Then $|\gamma_1'|\leq N$ and the path $\gamma_1$ followed by a vertical geodesic to $H(\delta)$ is geodesic of length $\leq N+\delta$. Similarly consider a geodesic $\gamma_2$ from $a_2$ to a vertex of  $H(\delta)$, so that $|\gamma_2|\leq  N+\delta$. Let $b_1$ ($b_2$)  be the terminal point of $\gamma_1$ (respectively $\gamma_2$). 
Since $H^\delta$ is convex (Lemma \ref{geo}), Lemma \ref{parallel} implies every point of a geodesic $\alpha$ connecting $a_1$ and $a_2$ is 
within $N+3\delta$ of a point  of a geodesic connecting $b_1$ to $b_2$ (in $H^\delta$). Hence each point of $\alpha$ is within $N+2\delta$ of $H$. 
\end{proof}

In the next lemma we assume $\delta\geq 4$. 
\begin{lemma}\label{Deep} 
Suppose $N\geq 0$. If  $H$ is the horoball for $gC$ for some $g\in G$ and $\alpha:[0,k]\to X$ is a geodesic with $\alpha(0)$ and $\alpha(k)$ both within $N$ of $H$, then $\alpha$ maps the interval $[N+3  \delta,k-(N+3 \delta)]$ into $H^{\delta}$. Furthermore, if $\alpha(0)\in H(0)=gC$ then there is a constant $J_{\ref{Deep}} (N, \delta)$ such that if $|\alpha|\geq J_{\ref{Deep}}(N,\delta)$ then there is a geodesic $\beta$ from $\alpha(0)$ to $\alpha(k)$ such that an initial segment of $\beta$ is vertical of length ${k-(N+3\delta)\over 2}$. 
\end{lemma}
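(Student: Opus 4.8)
There are two assertions to establish: that $\alpha$ carries $[N+3\delta,\,k-(N+3\delta)]$ into $H^{\delta}$, and (when $\alpha(0)\in H(0)$ and $|\alpha|$ is large) that a geodesic replacement $\beta$ of $\alpha$ begins with a vertical segment of length $\tfrac12(k-(N+3\delta))$. The plan is to obtain the first assertion from a thin quadrilateral comparison of $\alpha$ with a geodesic that dives into $H^{\delta}$ at each end, and then to deduce the second by a short surgery on $\alpha$.

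For the first assertion I will compare $\alpha$ with the following path. Let $q_{1}$, respectively $q_{2}$, be a closest point of $gC=H(0)$ to $\alpha(0)$, respectively $\alpha(k)$, and let $z_{i}$ be the vertex of depth $\delta$ directly below $q_{i}$. As in the proof of Lemma~\ref{QC}, a geodesic from the corresponding endpoint of $\alpha$ to $q_{i}$, followed by the vertical segment $q_{i}\to z_{i}$, is a geodesic $\mu_{i}$ with $|\mu_{i}|\le N+\delta$ and $z_{i}\in H(\delta)\subseteq H^{\delta}$. By Lemma~\ref{geo} there is a geodesic $\sigma$ from $z_{1}$ to $z_{2}$ lying in $H^{\delta}$, of the form (vertical, horizontal of length $\le3$, vertical); since $|\sigma|=d(z_{1},z_{2})\ge k-2(N+\delta)$, the maximal depth reached by $\sigma$ is of order $k/2$, so away from its two ends $\sigma$ runs deep in $H$. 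Now $(\alpha,\mu_{2},\sigma^{-1},\mu_{1}^{-1})$ is a geodesic quadrilateral, so every point of $\alpha$ lies within $2\delta$ of $\mu_{1}\cup\sigma\cup\mu_{2}$. If $\alpha(t)$ lies within $2\delta$ of $\mu_{1}$ then $t=d(\alpha(0),\alpha(t))\le|\mu_{1}|+2\delta$, and symmetrically for $\mu_{2}$, so for $t$ in (the interior of) the stated interval $\alpha(t)$ lies within $2\delta$ of some $\sigma(s)$. Comparing parameters via $d(\alpha(0),\sigma(s))\le|\mu_{1}|+s$ and its mirror inequality bounds $\min\{s,\,|\sigma|-s\}$ from below in terms of $\min\{t,\,k-t\}$; since the depth of $\sigma(s)$ is $\delta+\min\{s,\,|\sigma|-s\}$ up to a horizontal error $\le3\le\delta$, and since depth changes by at most $1$ along an edge while $d(\alpha(t),\sigma(s))\le2\delta$, one concludes $\mathcal D(\alpha(t))\ge\delta$. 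The accounting of these constants (using $\delta\ge4$ to absorb the length-$\le3$ horizontal piece, and a careful treatment near the two ends of $\alpha$) is what pins the interval down to exactly $[N+3\delta,\,k-(N+3\delta)]$.

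For the second assertion, assume $\alpha(0)\in H(0)$ and $k=|\alpha|\ge J_{\ref{Deep}}(N,\delta)$, where $J_{\ref{Deep}}(N,\delta)$ will be a bound of the shape $3N+6\delta+O(1)$. Put $p=\alpha\bigl(k-(N+3\delta)\bigr)$. The first assertion (applied with the endpoint $\alpha(0)\in H(0)$, which is trivially within $N$ of $H$) gives $p\in H^{\delta}$, and $\mathcal D(p)=d(p,gC)\le d(p,\alpha(k))+d(\alpha(k),gC)\le(N+3\delta)+N$, so $p$ has bounded depth. By Lemma~\ref{GM3.10} together with the convexity in Lemma~\ref{geo} (extended in the obvious way to permit one endpoint on $H(0)$), there is a geodesic $\beta_{0}$ from $\alpha(0)$ to $p$ of the form: a vertical upward segment of length $D$, a horizontal segment of length $h\le3$, a vertical downward segment of length $D-\mathcal D(p)$. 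Choosing $J_{\ref{Deep}}(N,\delta)$ large enough that $k-(N+3\delta)-3>\mathcal D(p)$ for the worst possible $\mathcal D(p)\le2N+3\delta$ forces $D>\mathcal D(p)\ge\delta$, so $\beta_{0}$ genuinely has this full shape. Then $\beta:=\beta_{0}\cdot\alpha|_{[k-(N+3\delta),\,k]}$ joins $\alpha(0)$ to $\alpha(k)$ with length $d(\alpha(0),p)+(N+3\delta)=d(\alpha(0),\alpha(k))$, hence is a geodesic; and from $|\beta_{0}|=2D+h-\mathcal D(p)=k-(N+3\delta)$ together with $\mathcal D(p)\ge\delta\ge4>3\ge h$ we obtain $D\ge\tfrac12(k-(N+3\delta))$. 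Therefore an initial segment of $\beta$ of length $\tfrac12(k-(N+3\delta))$ is vertical, as required.

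I expect the main obstacle to be the first assertion. One must prove not just that the middle of $\alpha$ stays near $H$ (which is already Lemma~\ref{QC}) but that it descends all the way to depth $\delta$: the underlying mechanism is that $gC$ is exponentially distorted in $X$, so any geodesic with both endpoints near $gC$ is pushed up into the horoball at essentially unit depth per step, and the delicate point is to turn this into the sharp statement that depth $\delta$ is already attained at parameter $N+3\delta$, which forces a careful balancing of the fellow-traveling error against $|\mu_{i}|$ and the horizontal length of $\sigma$. Once the first assertion is in hand, the surgery producing $\beta$ is routine.
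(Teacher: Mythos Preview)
Your Part 1 is essentially the paper's approach, but you go to depth $\delta$ where the paper goes to depth $3\delta$. That choice matters: with $z_i\in H(3\delta)$ the comparison geodesic $\sigma$ lies in $H^{3\delta}$, so being within $2\delta$ of $\sigma$ already forces depth $\ge\delta$, and the endpoint segments $\mu_i$ have length $\le N+3\delta$, which is exactly the constant in the statement. With your $z_i\in H(\delta)$, being within $2\delta$ of $\sigma\subset H^{\delta}$ only gives depth $\ge -\delta$; your attempted recovery via the parameter bound yields $s\ge t-(N+3\delta)$, hence depth of $\sigma(s)\ge\delta+(t-N-3\delta)$, and you need this to exceed $3\delta$ to conclude depth of $\alpha(t)\ge\delta$. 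That forces $t\ge N+5\delta$, not $N+3\delta$. So the ``careful balancing'' you allude to does not in fact pin down the stated constant; the paper's trick of descending to $3\delta$ is exactly what makes it work.

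The more serious issue is Part 2, which you describe as routine once Part 1 is done. It is not. Your $\beta_0$ is the Lemma~\ref{GM3.10} geodesic \emph{in the horoball $H$}, and you need it to be a geodesic \emph{in $X$} so that $|\beta_0|=d_X(\alpha(0),p)=k-(N+3\delta)$ and $\beta=\beta_0\cdot\alpha|_{[k-(N+3\delta),k]}$ has length $k$. But Lemma~\ref{geo} only gives convexity of $H^m$ for $m\ge\delta$; the full horoball $H=H^0$ need not be convex in $X$, because $H(0)=gC$ sits inside $\Gamma(G,S)$ and an $X$-geodesic between two points of $gC$ (or from $\alpha(0)\in gC$ to a nearby point of $gC$) can leave $H$ through $\Gamma(G,S)\setminus gC$. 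Concretely, the initial segment $\alpha|_{[0,N+3\delta]}$ can exit $H$, so $d_H(\alpha(0),p)$ may strictly exceed $d_X(\alpha(0),p)$, your $\beta_0$ is then too long, and $\beta$ is not a geodesic. The ``obvious extension'' you invoke is exactly the subtle point. The paper confronts this head-on: it introduces $L(N)$, a bound on the $gC$-length of a path joining two points of $gC$ at $X$-distance $\le N+4\delta$, and $M(N)\approx\log_2 L(N)$, the depth at which such points become horizontally adjacent. These constants quantify the distortion of $gC$ in $X$, enter the definition of $J_{\ref{Deep}}(N,\delta)$, and are what let the paper replace the (possibly non-$H$) initial stretch of $\alpha$ by a vertical-plus-one-edge path inside $H$. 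Your surgery bypasses this and therefore does not produce a geodesic in general.
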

\begin{proof}
Let $a_1=\alpha(0)$ and $a_2=\alpha(k)$. Let $\gamma_1'$ be a geodesic from $a_1$ to a closest point of $gC$. Then $|\gamma_1'|\leq N$. Let $\gamma_1$ be $\gamma_1'$ followed by a vertical geodesic to $H(3\delta)$, a geodesic of length $\leq N+3\delta$. Similarly consider a geodesic $\gamma_2$ from $a_2$ to a vertex of  $H(3\delta)$, so that $|\gamma_2|\leq  N+3\delta$. Let $b_1$ ($b_2$)  be the terminal point of $\gamma_1$ (respectively $\gamma_2$) and $\beta$ be a geodesic between $b_1$ and $b_2$. Since $H^{3\delta}$ is convex, it contains the image of $\beta$. By Lemma \ref{parallel},  the distance between $\alpha (N+3\delta+i)$ and a point of $\beta$ (and hence a point of $H^{3\delta}$) is $\leq 2\delta$ for $0\leq i\leq k-2(N+3\delta)$. Then  $\alpha (N+3\delta+i)\in H^{\delta}$ for $0\leq i\leq k-2(N+3\delta)$. In particular, $\alpha$ restricted to $[N+3\delta, k-(N+3\delta)$ has image in $H^{\delta}$ (as is required in the first part of the lemma). 

Now assume that $\alpha(0)\in H(0)$. Note that $\alpha$ restricted to the interval $[N+3\delta, k-(N+3\delta)]$ has image in $H^\delta$. Let $N_1(\leq N+3\delta)$ be the first integer such that $\alpha$ restricted to $[N_1, k-(N+3\delta)]$ has image in  $H(\delta)$. Let $N_2(\leq N+3\delta)$ be the smallest integer such that $\alpha$ restricted to $[N_1, k-N_2]$ has image in  $H(\delta)$. Let $p$ be the point of $H(0)$ directly below $\alpha(N_1)$. Note that $d(\alpha(0),p)\leq N+4\delta$. Let $L(N)$ be an integer such that for any $h\in G$, two points in $hC$ of distance apart $\leq N+4\delta$ in $X$ are connected by a path in $hC$ of length $\leq L(N)$. Let $\beta$ be a path in $gC$ of length $\leq L(N)$ from $\alpha(0)$ to $p$. (See Figure 4.)

\vspace {.4in}
\vbox to 3in{\vspace {-1.8in} \hspace {-1.95in}
\includegraphics[scale=1]{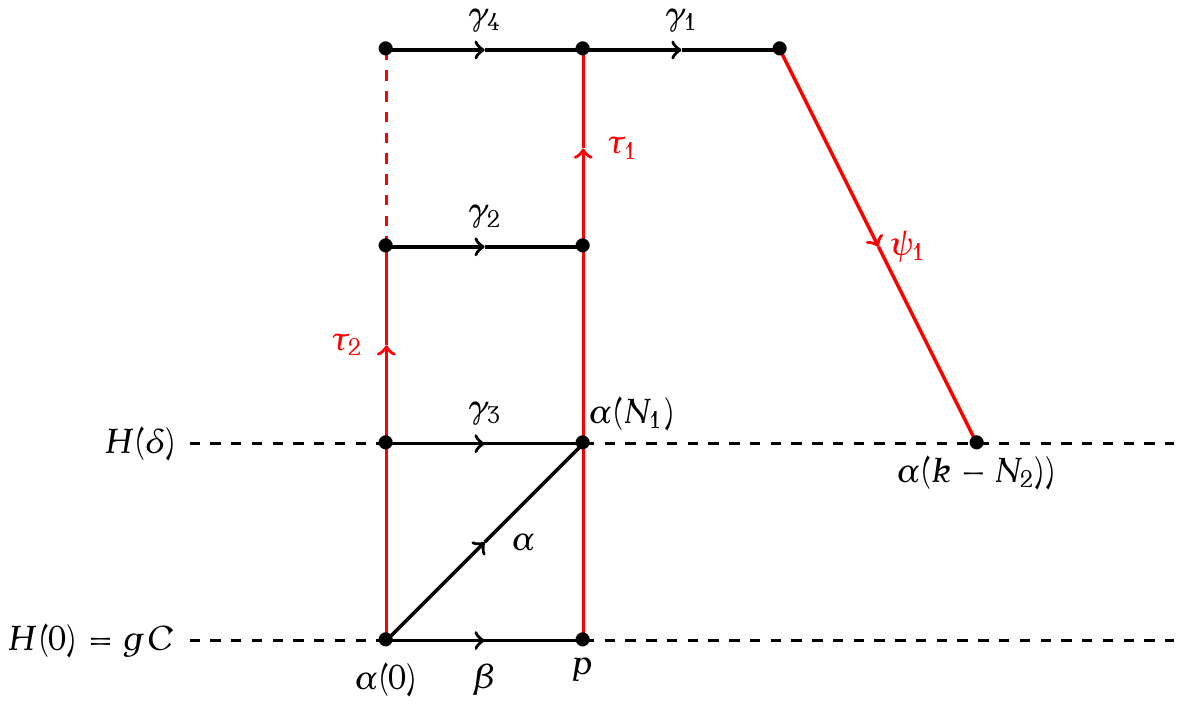}
\vss }
\vspace{-.1in}

\centerline{Figure 4}

\medskip

Let $\alpha'$ be a geodesic obtained from $\alpha$ by replacing $\alpha$ on $[N_1, k-N_2]$ by $(\tau_1,\gamma_1,\psi_1)$ where $\tau_1$ and $\psi_1$ are vertical and $\gamma_1$ is horizontal of length $\leq 3$ (see Lemmas \ref{GM3.10} and \ref{geo}). Let $M(N)$ be the first integer larger than $\delta$ such that $2^{M(N)}\geq L(N)$. Then there is a geodesic $(\tau_2, \gamma_2)$ from $\alpha(0)=\beta(0)$ to a vertex directly above $\alpha(N_1)$ (and hence directly above $p$), where $|\tau_2|= M(N)$ and $|\gamma_2|\leq 1$. If $|\tau_1|\geq  M(N)-\delta$, then the  end point of $\gamma_2$ is on $\tau_1$. That means that the geodesic $\alpha|_{[0,N_1]}$ has at most one horizontal edge. 

{\it Assume for the moment that $|\tau_1|\geq  M(N)-\delta$.} 

If $\alpha|_{[0,N_1]}$ has one horizontal edge. Then there is a path $(\tau_3,\gamma_3)$ from $\alpha(0)$ to $\alpha(N_1)$ where $\tau_3$ is vertical and $\gamma_3$ is an edge. Then $(\tau_3,\gamma_3, \tau_1,\gamma_1, \psi_1)$ is geodesic from $\alpha(0)$ to $\alpha(k-N_2)$. Let $\tau_4$ be the vertical geodesic beginning at $\alpha(0)$ and ending in the same level as the end point of $\tau_1$. Let $\gamma_4$ be the horizontal edge with the same label as $\gamma_3$. Then $(\tau_4,\gamma_4, \gamma_1,\psi_1)$ is geodesic of length $k-N_2$. Then length of the vertical segment $\gamma_4$ is: 
$$|\gamma_4|\geq {k-N_2+\delta-4\over 2}\geq {k-(N+3\delta) +\delta-4\over 2}={k-(N-2\delta+4)\over 2}$$

If $\alpha|_{[0,N_1]}$ is vertical, then the vertical path $\tau_5$ from $\alpha(0)$ to the initial point of $\gamma_1$ is such that $(\tau_5,\gamma_1,\psi_1)$ is geodesic from $\alpha(0)$ to $\alpha(k-N_2)$ and 
$$|\gamma_5|\geq {k-(N+3\delta)+\delta-3\over 2}={k-(N+2\delta+3)\over 2}$$

In either case, there is a geodesic from $\alpha(0)$ to $\alpha(k)$ with initial vertical segment of length ${k-(N+2\delta+4)\over 2}$. We can select: 
$$K_{\ref{Deep}}(N)=N+3\delta(\geq N+2\delta+4)$$ 
{\it We now need to select $k(=|\alpha|)$ large enough to ensure $|\tau_1|\geq M(N)-\delta$.}
Since $k-N_2=N_1+|\tau_1|+|\gamma_1|+|\psi_1|$, $N_i\leq N+3\delta$, $|\gamma_1|\leq 3$ and $|\tau_1|=|\psi_1|$:
$$ |\tau_1|= {k-(N_1+N_2)-|\gamma_1|\over 2}\geq {k-2(N+3\delta)-3\over 2}$$
If $k\geq [2(N+3\delta)+3]+ 2(M(N)-\delta)$ then $|\tau_1|\geq M(N)-\delta$. Finally let 
$$J_{\ref{Deep}}(N)=[2(N+3\delta)+3]+ 2(M(N)-\delta)$$ 
\end{proof}

\section{Linearly Connected Limit Sets of Quasi-isometrically Embedded Subspaces} \label{QIE}
In order to prove our main theorem, we must know that the linearly connected visual metrics on our vertex groups carry over to linearly connected limit sets of their quasi-isometrically embedded images in the cusped space for the over group. This section is devoted to a general result (Theorem \ref{sub}) that implies what we need.

\begin{theorem} \label {sub} 
Suppose $(A,{\bf Q})$ is a relatively hyperbolic pair, $Y$ is a cusped space for $(A,{\bf Q})$ with visual metric $d_1$ on $\partial Y$, and $\partial Y$ is linearly connected with respect to $d_1$. If $A$ is a subgroup of $G$, $(G,{\bf P})$ is relatively hyperbolic with cusped space $X$ and visual metric $d_V$ on $\partial X$, and the map $i:Y\to X$, induced by inclusion is a quasi-isometry onto its image $Y'\subset X$, then the limit set $Z(Y')$ of $Y'$ is linearly connected with respect to $d_V$.  
\end{theorem}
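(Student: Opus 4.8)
The plan is to exploit the fact that a quasi-isometry $i:Y\to X$ induces a homeomorphism $\partial i:\partial Y\to Z(Y')\subseteq\partial X$, and to control how this homeomorphism distorts the two visual metrics. First I would recall the standard fact that a $(\lambda,\epsilon)$-quasi-isometric embedding between hyperbolic geodesic spaces extends to a boundary embedding $\partial i$, and that there are constants coming from $\lambda,\epsilon$ and the two hyperbolicity constants such that inner products are coarsely preserved: $(\partial i(x).\partial i(y))_{i(\ast)}$ differs from $(x.y)_\ast$ by an additive constant, hence (after changing basepoint from $i(\ast)$ to the basepoint $p$ of $X$ used for $d_V$, which again only costs an additive constant in the inner product) there are constants $c_1,c_2>0$ with $c_1\,d_1(x,y)^{\theta}\le d_V(\partial i(x),\partial i(y))\le c_2\,d_1(x,y)^{\theta}$ for all $x,y\in\partial Y$, where $\theta=\log a_X/\log a_Y$ is the ratio of the two visual parameters. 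In other words $\partial i$ is a \emph{snowflake} (bi-Hölder) equivalence between $(\partial Y,d_1)$ and $(Z(Y'),d_V)$.

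The second step is to observe that linear connectedness is preserved under snowflake equivalences. Suppose $(\partial Y,d_1)$ is $K$-linearly connected. Given $z_1,z_2\in Z(Y')$, write $x_j=(\partial i)^{-1}(z_j)$, take a path $\sigma$ in $\partial Y$ from $x_1$ to $x_2$ of $d_1$-diameter at most $K\,d_1(x_1,x_2)$, and push it forward to the path $\partial i\circ\sigma$ in $Z(Y')$. For any two points $u,v$ on $\partial i\circ\sigma$, their $d_V$-distance is at most $c_2\,d_1((\partial i)^{-1}u,(\partial i)^{-1}v)^{\theta}\le c_2\,(K\,d_1(x_1,x_2))^{\theta}$, while $d_V(z_1,z_2)\ge c_1\,d_1(x_1,x_2)^{\theta}$; hence the diameter of $\partial i\circ\sigma$ is at most $(c_2/c_1)K^{\theta}\,d_V(z_1,z_2)$, so $Z(Y')$ is $K'$-linearly connected with $K'=(c_2/c_1)K^{\theta}$. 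This also implicitly uses that $Z(Y')$ is exactly the image $\partial i(\partial Y)$, which holds because $i$ is a quasi-isometry onto $Y'$ and the limit set of $Y'$ is by definition the set of boundary points approached by sequences in $Y'$, matched with the boundary of the quasi-convex-in-itself image.

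The step I expect to be the main obstacle is the first one: producing the uniform two-sided bound $c_1\,d_1^{\theta}\le d_V\circ\partial i\le c_2\,d_1^{\theta}$ with honest constants, since visual metrics are only defined up to multiplicative constants and, as the introduction warns, "visual metrics are defined in terms of exponential functions" so one must be careful that the quasi-isometry constants enter only additively at the level of inner products (they do, via the thin-triangles/Morse-type estimates) before exponentiating. The cleanest route is: (a) show $i(Y')$ is quasi-convex in $X$ using that $i$ is a quasi-isometric embedding of a hyperbolic space (so geodesics in $Y$ map to quasigeodesics in $X$, which fellow-travel $X$-geodesics by Morse); (b) use Lemma \ref{Yes} / the inner-product estimates of \cite{ABC91} to compare $(x.y)$ computed via rays in $Y'$ versus via rays in $X$, getting the additive comparison $\bigl|(\partial i(x).\partial i(y))_{p}-\lambda^{\pm1}(x.y)_{\ast}\bigr|\le C$ (the factor $\lambda^{\pm1}$ absorbed into $\theta$); (c) exponentiate and fold all additive errors and the $k_1,k_2$ from Definition \ref{visualm} for both metrics into $c_1,c_2$. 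Once this bi-Hölder comparison is in hand, the linear-connectedness transfer in the second step is routine, and Theorem \ref{sub} follows.
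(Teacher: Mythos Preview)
There is a genuine gap in your first step. A $(\lambda,\epsilon)$-quasi-isometric embedding distorts Gromov products \emph{multiplicatively}, not just additively: one only gets
\[
\tfrac{1}{\lambda}(y_1.y_2)_\ast - C \;\le\; (\partial i(y_1).\partial i(y_2))_{p} \;\le\; \lambda(y_1.y_2)_\ast + C.
\]
Exponentiating yields $c_1\, d_1^{\theta_2} \le d_V\circ\partial i \le c_2\, d_1^{\theta_1}$ with \emph{two different} exponents $\theta_1 \ne \theta_2$, not a single $\theta$; your phrase ``the factor $\lambda^{\pm1}$ absorbed into $\theta$'' is exactly where this breaks. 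With distinct exponents your second step collapses: following your own computation one only obtains
\[
\mathrm{diam}_{d_V}(\partial i\circ\sigma) \;\le\; C'\, d_V(z_1,z_2)^{\theta_1/\theta_2},
\]
and since $\theta_1/\theta_2<1$ this is strictly weaker than linear for small $d_V(z_1,z_2)$ and does not establish linear connectedness.

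The paper sidesteps this by never comparing $(x_1.x_2)_\ast$ to $(y_1.y_2)_\ast$ head-on. Instead, for $x_3=\hat i(y_3)$ with $y_3\in C(y_1,y_2)$, it bounds the \emph{difference} $|(x_1.x_2)_\ast - (x_1.x_3)_\ast|$ (or the symmetric one with $x_2$) by an absolute constant (Lemma~\ref{K3}). Both of those quantities are arc-length parameters along the \emph{same} $X$-geodesic $s_1$, so their difference is controlled by $d_X\bigl(i(r_1(m)),i(r_1(m'))\bigr)\le |m-m'|$ (using that $i$ maps edges to edges and is hence $1$-Lipschitz), with $|m-m'|$ bounded via Lemma~\ref{K1}; the multiplicative constant $\lambda$ never enters. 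This is in effect the quasi-symmetry of $\hat i$, and quasi-symmetry (unlike a two-exponent bi-H\"older bound) does preserve linear connectedness. Your outline would become correct if you replaced ``snowflake/bi-H\"older equivalence'' by ``power quasi-symmetry'' and redid step~(2) accordingly, but verifying that is essentially the content of the paper's Lemmas~\ref{K1}--\ref{K3}.
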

\begin{proof}
First of all, there is a homeomorphism $\hat i:\partial Y\to \partial Y'$ induced by the quasi-isometry $i$  (see Theorem III.H.3.9, \cite{BrHa99})
Let $\ast\in Y$ be the identity vertex. There is a constant $K$ such that if $y_1,y_2\in \partial Y$ then there is a connected set $C(y_1,y_2)$  in $\partial Y$ of diameter $\leq Kd_1(y_1,y_2)$  and containing $y_1$ and $y_2$. 
Let $x_1,x_2\in Z(Y')$, and $y_1,y_2\in \partial Y$ such that $\hat i(y_1)=x_1$ and $\hat i(y_2)=x_2$. 
Recall (Definition \ref{visualm}), there are positive constants $k_1,k_2,k_1',k_2'$ such that for $y_1,y_2\in \partial Y$ and $x_1,x_2\in X$:
$$k_1e^{-(y_1,y_2)_\ast} \leq d_1(y_1,y_2)\leq k_2e^{-(y_1.y_2)_\ast} \hbox{; } k_1'e^{-(x_1,x_2)_\ast} \leq d_V(x_1,x_2)\leq k_2'e^{-(x_1.x_2)_\ast}$$
Then for any $y_3\in C(y_1,y_2)$, 
$$k_1e^{-(y_1.y_3)_\ast} \leq d_1(y_1,y_3)\leq Kd_1(y_1,y_2)\leq k_2Ke^{-(y_1.y_2)_\ast}=e^{ln(k_2K)-(y_1.y_2)_\ast}$$ 
Hence 
$$(A1)\ \ \ \ \ m=:(y_1.y_2)_\ast\leq (y_1.y_3)_\ast+ln({k_2K\over k_1})$$ 
Similarly,
$$(A2)\ \ \ \ \ m=:(y_1.y_2)_\ast\leq (y_2.y_3)_\ast+ln({k_2K\over k_1})$$ 

\begin{lemma} \label{K1} 
There is a constant $K_1$ such that for any $y_3\in C(y_1,y_2)$: 
$$|(y_1.y_2)_\ast -min\{(y_1.y_3)_\ast, (y_2.y_3)_\ast\}|\leq K_1$$
\end{lemma}
\begin{proof} 
First observe:
$$k_1e^{-(y_1.y_2)_\ast}\leq d_1(y_1,y_2)\leq d(y_1, y_3)+d_1(y_2,y_3)\leq k_2(e^{-(y_1,y_3)_\ast}+e^{-(y_2.y_2)_\ast})$$
Then: 
$$e^{-(y_1.y_2)_\ast} \leq {k_2\over k_1}(e^{-(y_1,y_3)_\ast}+e^{-(y_2.y_2)_\ast})\leq {k_2\over k_1}max\{2e^{-(y_1,y_3)_\ast }, 2e^{-(y_2.y_2)_\ast } \}$$
$$=max \{e^{ln({2k_2\over k_1})-(y_1.y_3)_\ast}, e^{ln({2k_2\over k_1})-(y_2.y_3)_\ast }\}$$
$$(y_1.y_2)_\ast\geq min\{(y_1.y_3)_\ast, (y_2,y_3)_\ast\}-ln({2k_1\over k_1})$$
Combining this last inequality with $(A1)$ and $(A2)$:
$$min\{(y_1.y_3)_\ast, (y_2,y_3)_\ast\}-ln({2k_2\over k_1})\leq (y_1.y_2)_\ast\leq min\{(y_1.y_3)_\ast, (y_2,y_3)_\ast\}+ln({k_2K\over k_1})$$
Let $m'=(y_1.y_3)_\ast$ and $m''=(y_2.y_3)_\ast$. This last inequality becomes:
$$ (A3)\ \ \ min\{m',m''\}-ln({2k_2\over k_1}) \leq m\leq min\{m',m''\}+ln({k_2K\over k_1})$$
Simply let $K_1=max\{ln({k_2K\over k_1}), ln({2k_2\over k_1})\}$ to complete the proof of the lemma.
\end{proof}

Let $q_1$ be the quasi-isometry constant for $i$, and $r_1$, $r_2$ and $r_3$ be geodesics at the identity vertex $\ast\in Y$ converging to $y_1$, $y_2$  and $y_3$ respectively. Let $s_1$, $s_2$ and $s_3$ be geodesics at the identity vertex $\ast\in X$ (we use $\ast$ for our base point in both $X$ and $Y$) converging to $x_1$, $x_2$  and $x_3$ respectively. Now there is a constant $q_2$ such that if $r$ is a $q_1$ quasi-geodesic ray at $\ast\in X$ and $s$ is a geodesic at $\ast$ converging to the same boundary point as does $r$, then $r$ and $s$ $q_2$-track one another (Proposition 3.3 \cite{ABC91}). In particular, $i(r_j)$ is $q_2$-tracked by $s_j$, for $j\in \{1,2,3\}$.

\begin{lemma} \label{K2} 
 There is a constant $K_2$ such that if $r_1$ and $r_2$ are geodesic rays at $\ast \in Y$ converging to $y_1$ and $y_2$ respectively, $s_1$ and $s_2$ are geodesic rays at $\ast\in X$ which $q_2$ track $ir_1$ and $ir_2$ respectively, $m=(y_1.y_2)_\ast$, and $d(s_1(m_1),i(r_1( m)))\leq q_2$ for some $m_1\geq 0$, then ($(x_1.x_2)_\ast$ is ``close" to $m_1$):
$$|(x_1.x_2)_\ast -m_1|\leq K_2$$
 By symmetry, if $m_2$ is such that $d(s_2(m_2),i(r_2( m)))\leq q_2$ then
 $$  |(x_1.x_2)_\ast -m_2|\leq K_2 $$
\end{lemma}
\begin{proof} 
Note that  $d(i(a),i(b))\leq d(a,b)$ for all $a,b\in Y$ (since $i$ maps edges to edges). By Remark \ref{sub1}, $d(r_1(m),r_2(m))\leq \delta$ and so $d(ir_1(m),ir_2(m))\leq \delta$.
By the triangle inequality (see Figure 5):
$$(B)\ \ \ \ d(s_1(m_1), s_2(m_2))\leq 2q_2+d(i(r_1(m)), i(r_2(m)))\leq 2q_2+\delta$$
Again by the triangle inequality (with $m_1=d(\ast, s_1(m_1))$ and $m_2=d(\ast, s_2(m_2))$):
 $$m_1-d(s_1(m_1), s_2(m_2))\leq m_2\leq m_1+d(s_1(m_1), s_2(m_2))$$
This last inequality and equation $(B)$ imply:
$$(C)\ \ \ \ \ \ \ \ |m_2-m_1|\leq d(s_1(m_1), s_2(m_2))\leq 2q_2+\delta$$ 
$$d(s_1(m_1), s_2(m_1))\leq d(s_1(m_1),s_2(m_2))+|m_2-m_1|\leq 2d(s_1(m_1),s_2(m_2))$$

\vspace {.1in}
\vbox to 3in{\vspace {-2in} \hspace {-1.8in}
\includegraphics[scale=1]{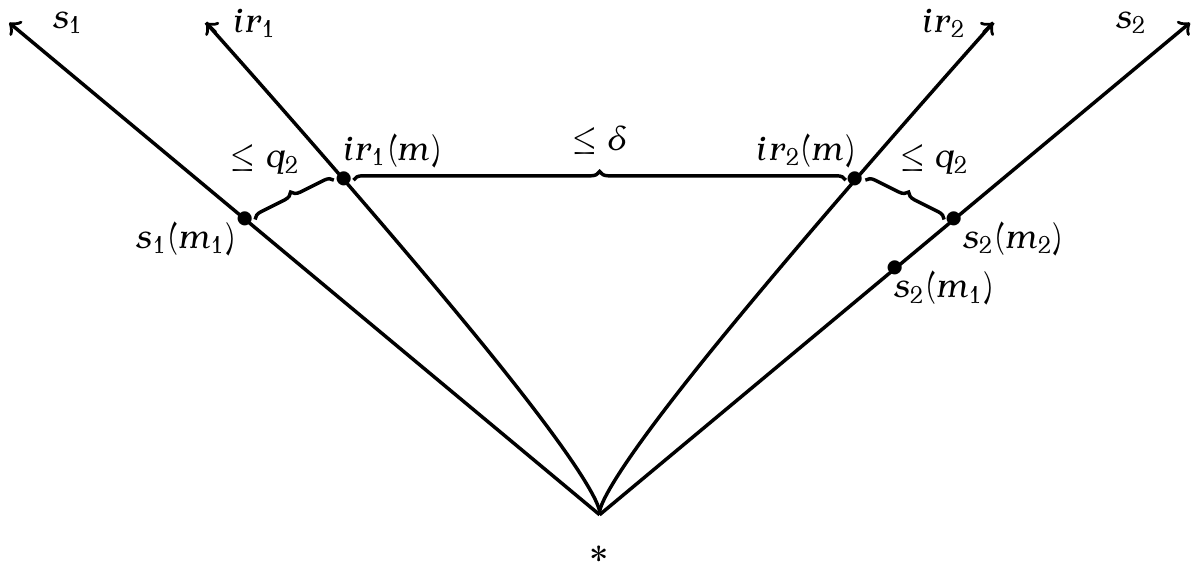}
\vss }
\vspace{-.9in}

\centerline{Figure 5}

\medskip

Combining this last inequality with $(B)$:
$$(D1)\ \ \ \ \ \ d(s_1(m_1), s_2(m_1))\leq 4q_2+2\delta$$ 
Symmetrically:
 $$(D2)\ \ \ \ \ \ d(s_1(m_2), s_2(m_2))\leq 4q_2+2\delta$$ 




\begin{claim}\label{Claim1} 
If  $d(s_1(t), s_2(t))\leq k$, then $(x_1.x_2)_\ast\geq t-{k\over 2}-2\delta$. In particular,  for $t=m_1$ (and $k=(4q_2+2\delta$), $(D1)$ implies $(x_1.x_2)_\ast \geq m_1-2q_2-3\delta$.
\end{claim} 
\begin{proof}
Considering the geodesic triangle with sides $s_1|_{[0,t]}$, $s_2|_{[0,t]}$ and a geodesic connecting $s_1(t)$ and $s_2(t)$. The insize points on $s_1$ and $s_2$ are $s_1(t-{k'\over 2})$ and $s_2(t-{k'\over 2})$ where $k'\leq k$. Then 
$$d(s_1(t-{k'\over 2}), s_2(t-{k'\over 2}))\leq \delta$$ 
Let $l'$ be a geodesic line forming an ideal triangle with $s_1$ and $s_2$. Let $v'$ be the insize point on $l'$ and $\bar m=(x_1.x_2)_\ast$. Note that $d(s_1(\bar m+n), s_2(\bar m+n))\geq 2\delta$ if $n\geq 2\delta$. If $t-{k'\over 2}\geq \bar m+2\delta$ then $d(s_1(t-{k'\over 2}), s_2(t-{k'\over 2}))\geq 2\delta$. But $d(s_1(t-{k'\over 2}), s_2(t-{k'\over 2}))\leq \delta$. Instead, $t-{k\over 2}\leq t-{k'\over 2}\leq \bar m+2\delta$. 
\end{proof}

\begin{claim}\label{Claim2} 
$(x_1.x_2)_\ast \leq m_1+5q_2+4\delta$. 
\end{claim}
\begin{proof}
Let $\bar m=(x_1.x_2)_\ast$ (so that $d(r_1(\bar m),r_2(\bar m))\leq \delta$) and $L=\bar m-m_1$. Our goal is to show: $L\leq  5q_2+4\delta$.  Let  $l$ be a geodesic line forming an ideal triangle with $r_1$ and $r_2$. Then $i(l)$ is a $q_1$ quasi-geodesic. 
Let $v$ be the insize point of $l$ in this triangle so that the points $v$, $r_1(m)$ and $r_2(m)$  are within $\delta$ of one another and so $i(v)$, $i(r_1(m))$ and $i(r_2(m))$ are within $\delta$ of one another as well.  For $j\in \{1,2\}$ let $a_j=ir_j(t_j)$ be a point of $i(r_j)$ such that:
$$d(a_j=ir_j(t_j),s_j(\bar m))\leq q_2$$
We show that $t_1\geq m$ (so that $a_1$ is within $\delta$ of a point $b_1$ of $il$ between $i(v)$ and $x_1$ - see Figure 6). First of all, $\bar m>m_1$ or the Claim is trivial. 
If $t_1<m$, let $\alpha_1$ be a geodesic from $\ast$ to $ir_1(m)$. Since $ir_1$ is a $q_1$ quasi-geodesic, there is a point $p_1$ on $\alpha_1$ such that:
$$d(a_1=ir_1(t_1),p_1)\leq q_2$$ 
Consider the geodesic triangle formed by $\alpha_1$, $\beta_1$ (a geodesic of length $\leq q_2$ from $s_1(m_1)$ to $ir_1(m)$) and $s_1([0,m_1])$. Since $|\beta_1|\leq q_2$, every point of $\beta_1$ is within $\delta+q_2$ of $s_1([0,m_1])$. In particular, $p_1$ is within $\delta +q_2$ of $s_1([0,m_1])$. So $d(s_1(\bar m), ir_1(t_1))\leq q_2$, $d(ir_1(t_1), p_1)\leq q_2$ and $d(p_1, s_1([0,m_1]))\leq q_2+\delta$.  By the triangle inequality, $d(s_1(\bar m), s_1([0,m_1])\leq 3q_2+\delta$, so $d(s_1(\bar m), s_1(m_1))\leq 3q_2+\delta$. Since $s_1$ is geodesic, $L=\bar m-m_1\leq 3q_2+\delta$ and the proof of the Claim is finished. So we may assume:
$$t_1\geq m$$

Next we show that if $a_2=ir_2(t_2)$ then $t_2\geq m$ (so that $a_2$ is within $\delta$ of a point $b_2$ of $il$ between $i(v)$ and $x_2$ -see Figure 6).

First of all we want to see that $\bar m\geq m_2$. We have $\bar m>m_1$. If $m_1\geq m_2$ then certainly $\bar m\geq m_2$, so we may assume that $m_1<m_2$. If $\bar m<m_2$, then $m_1<\bar m<m_2$ and $m_2-m_1\leq 2q_2+\delta$ (equation $(C)$). This implies that $L=\bar m-m_1\leq 2q_2+\delta$ and the Claim is proved. Instead we may assume $\bar m\geq m_2$. 

Replacing $j=1$ with $j=2$, the argument showing $0\leq \bar m-m_1\leq 3q_2+\delta$ shows that $0\leq \bar m-m_2\leq 3q_2+\delta$. Since $|m_2-m_1|\leq 2q_2+\delta$ (equation $(C)$), $L=|\bar m-m_1|\leq 5q_2+2\delta$ and the proof of the Claim is finished. So we may assume:
 $$t_2\geq m$$

Let $b_j$ a point of $i(l)$ within $\delta$ of $a_j$. Then (see Figure 6): 
$$d(s_1(m_1), s_1(\bar m))=L\leq $$
$$d(s_1(m_1), ir_1(m))+d(ir_1(m),i(v))+d(i(v),b_1)+d(b_1,a_1)+d(a_1,s_1(\bar m))$$
So that:
$$L\leq q_2+\delta+d(b_1, i(v))+\delta+q_2$$
Equivalently:
$$(E)\ \ \ \ \ \ d(b_1, i(v))\geq L-2(q_2+\delta)$$

\vspace {.4in}
\vbox to 3in{\vspace {-2in} \hspace {-1.8in}
\includegraphics[scale=1]{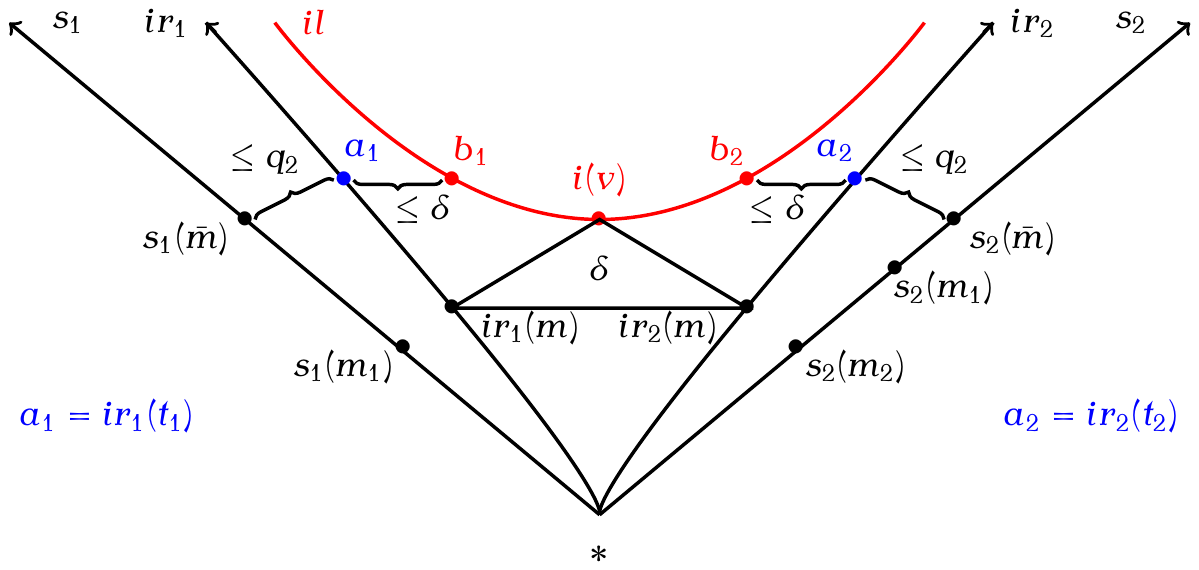}
\vss }
\vspace{-.9in}

\centerline{Figure 6}

\medskip

By the triangle inequality and inequality $(C)$:
$$(F)\ \ \ \ \ d(s_2(m_1),ir_2(m))\leq d(ir_2(m),s_2(m_2))+|m_2-m_1|\leq 3q_2+\delta$$
Next:
$$d(s_2(m_1),s_2(\bar m)=L\leq)$$
$$d(s_2(\bar m),a_2)+d(a_2,b_2)+d(b_2,i(v))+d(i(v), ir_2(m))+d(ir_2(m),s_2(m_1))=$$
$$q_2+\delta+d(b_2,i(v))+\delta+d(ir_2(m), s_2(m_1))$$
Reorganizing terms and applying $(F)$: 
$$(G)\ \ \ \ d(b_2,i(v))\geq L-q_2-2\delta-d(s_2(m_1),i(r_2(m)))\geq L-4q_2-3\delta$$
Let $\tau$ be a geodesic from $b_1$ to $b_2$. Since $il$ is a $q_1$ quasi-geodesic, there is $t\geq 0$ such that $d(\tau(t), i(v))\leq q_2$. By $(E)$ and $(G)$:
$$d(b_1,\tau(t))\geq d(b_1,i(v))-q_2\geq L-3q_2-2\delta \hbox{ and }$$
$$ d(\tau(t), b_2)\geq d(b_2,i(v))-q_2\geq L-5q_2-3\delta$$
Combining:
$$(H)\ \ \ \ d(b_1,b_2)=d(b_1,\tau(t))+d(\tau(t),b_2)\geq 2L-8q_2-5\delta$$

But, since $d(s_1(\bar m), s_2(\bar m))\leq \delta$,
$$ d(b_1,b_2)\leq d(b_1,s_1(\bar m)) +d(s_1(\bar m),s_2(\bar m))+d(s_2(\bar m),b_2)\leq 2q_2+3\delta$$
Combining this last inequality with $(H)$:
$$2L-8q_2-5\delta\leq d(b_1,b_2)\leq 2q_2+3\delta$$
$$L\leq 5q_2+4\delta$$
This completes the proof of the claim
\end{proof}
To complete the proof of the Lemma, simply let 
$$K_2=5q_2+4\delta$$ 
to obtain the first inequality. By symmetry the second inequality is true as well. 
\end{proof}

\begin{lemma}\label{K3} 
There is a constant $K_3$ such that for any $x_3\in \hat i(C(y_1,y_2))$:
$$min \{|(x_1.x_2)_\ast-(x_1.x_3)_\ast|, |(x_1,x_2)_\ast -(x_2.x_3)_\ast|\}\leq K_3$$
\end{lemma} 
\begin{proof}
Let  $r_3$ be a geodesic at $\ast\in Y$ converging to $y_3\in C(y_1,y_2)$ where $\hat i(y_3)=x_3$. Let $s_3$ be a geodesic at $\ast\in X$ that $q_2$ tracks $i(r_3)$.  Then $s_3$ converges to $x_3=\hat i(y_3)$. Let $m'=(y_1.y_3)_\ast$ and let $m_1'>0$ be such that $d(s_1(m_1'), ir_3(m'))\leq q_2$. Let $m''=(y_2.y_3)_\ast$ and let $m_1''>0 $ be such that $d(s_2(m_1''), ir_3(m'')\leq q_2$. See Figure 7.

By Lemma \ref{K1}:
$$|(y_1.y_2)_\ast -min\{(y_1.y_3)_\ast, (y_2.y_3)_\ast\}|\leq K_1$$
Say $|(y_1.y_2)_\ast -(y_1.y_3)_\ast|\leq K_1$. Since $s_1$ is geodesic, the triangle inequality implies: 
$$|(x_1.x_2)_\ast-(x_1.x_3)_\ast|=d(s_1((x_1.x_2)_\ast), s_1((x_1.x_3)_\ast))\leq d( s_1((x_1.x_2)_\ast), s_1(m_1))+$$
$$d(s_1(m_1), ir_1(m))+d(ir_1(m), ir_1(m'))+d(ir_1(m'), s_1(m_1'))+d(s_1(m_1'),s_1((x_1.x_3)_\ast))$$

\vspace {.4in}
\vbox to 3in{\vspace {-1.8in} \hspace {-2.1in}
\includegraphics[scale=1]{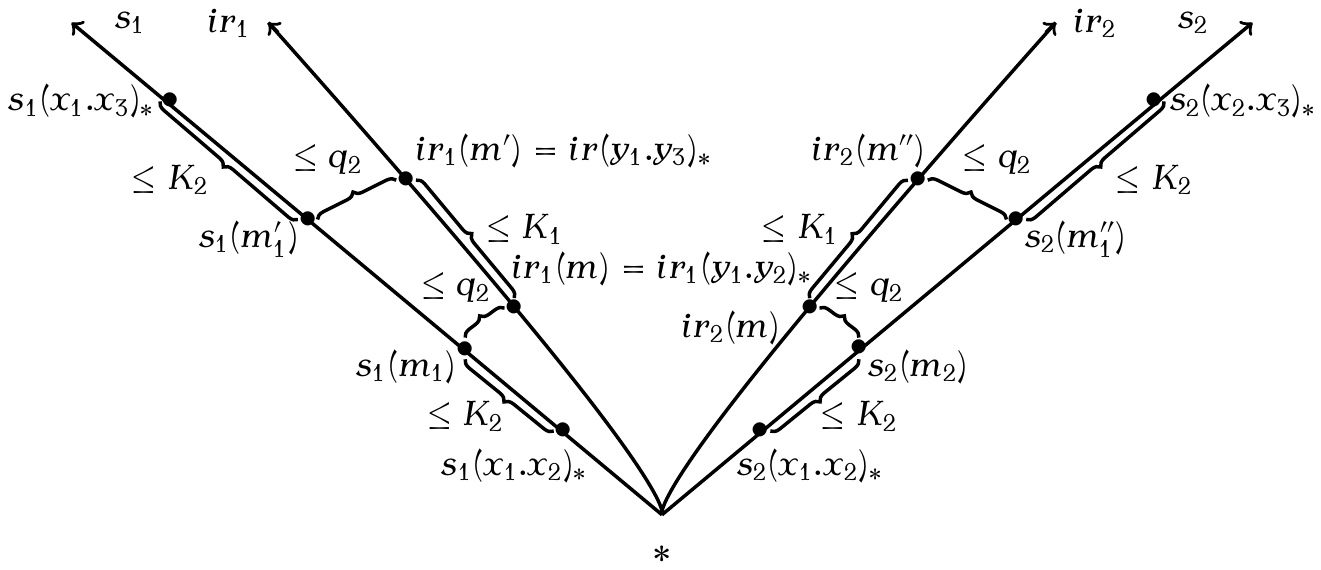}
\vss }
\vspace{-.8in}

\centerline{Figure 7}

\medskip

Simplifying via Figure 7:
$$|(x_1.x_2)_\ast-(x_1.x_3)_\ast|\leq |(x_1.x_2)_\ast-m_1|+q_2+K_1+q_2+|(x_1.x_3)_\ast-m_1'|$$
By Lemma \ref{K2}: $ |(x_1.x_2)_\ast-m_1|\leq K_2$, $|(x_1.x_2)_\ast-m_2|\leq K_2$, $ |(x_1.x_3)_\ast-m_1'|\leq K_2$ and $|(x_2.x_3)_\ast-m_1''|\leq K_2$ so:
$$|(x_1.x_2)_\ast-(x_1.x_3)_\ast|\leq 2K_2+2q_2+K_1$$
Similarly  if $|(y_1.y_2)_\ast -(y_2.y_3)_\ast|\leq K_1$, then
$$|(x_1.x_2)_\ast-(x_2.x_3)_\ast|\leq 2K_2+2q_2+K_1$$
Select $K_3=2K_2+2q_2+K_1$ to finish the lemma.
\end{proof}

Now we finish the proof of the Theorem. Suppose $x_1,x_2\in Z(Y')$. Let $y_1, y_2\in \partial Y$ be such that $\hat i(y_1)=x_1$ and $\hat i(y_2)=x_2$. Consider $x_3$ in the connected set $\hat i(C(y_1,y_2))$ (which contains $x_1$ and $x_2$). Then $x_3=\hat i(y_3)$ for some $y_3\in C(y_1,y_2)$. 

By Lemma \ref{K3}, either $(x_1.x_3)_\ast-K_3\leq (x_1.x_2)_\ast\leq (x_1.x_3)_\ast+K_3$ or 
$(x_2.x_3)_\ast-K_3\leq (x_1.x_2)_\ast\leq (x_2.x_3)_\ast+K_3$. Assume the former. Then:
$$e^{K_3}e^{-(x_1.x_3)_\ast}\geq e^{-(x_1.x_2)_\ast}\geq e^{-K_3}e^{-(x_1.x_3)_\ast}$$
Since $d_V$ is a visual metric on $\partial X$, if $x,y\in \partial X$:
$$k_1'e^{-(x.y)_\ast}\leq d_V(x,y)\leq k_2'e^{-(x.y)_\ast}$$
These last two inequalities imply: 
$$d_V(x_1, x_3)\leq k_2'e^{-(x_1.x_3)_\ast}\leq k_2'e^{K_3}e^{-(x_1.x_2)_\ast}\leq {k_2'\over k_1'}e^{K_3}d_V(x_1,x_2)$$ 
Similarly, if 
$(x_2.x_3)_\ast-K_3\leq (x_1.x_2)_\ast\leq (x_2.x_3)_\ast+K_3$ then 
$$d(x_2, x_3)\leq {k_2'\over k_1'}e^{K_3}d(x_1,x_2)$$ 
In any case, the diameter of the connected set $\hat i(C(y_1,y_2))$ (containing $x_1$ and $x_2$)  is $\leq d(x_1,x_2)({k_2'\over k_1'}e^{K_3} +1)$, and $Z(Y')$ is linearly connected.
\end{proof}

\section{Piecewise Visual Linearly Connected Metrics} \label{PCM}

The proof of the main theorem is nearly identical to that of the simplest case $G=A\ast_CB$, where $G$, $A$ and $B$ are all hyperbolic relative to $C$, but the notation in this basic case is substantially easier to assemble. We prove the base case and then comment on the minor adjustments required to prove the theorem in the case when $G$ is hyperbolic relative to $B$, and  $A$ is hyperbolic relative to $C$; the case when $G$ is an HNN extension $A\ast_C$ and both $G$ and $A$ are hyperbolic relative to $C$; and the finally general graph of groups situation. In all of our proofs we use $e^{-(r.s)_p}$ instead of $a^{-(r.s)_p}$ for a general real number $a>1$ although our proofs work equally well with any fixed base $a>1$.

We are in the situation where $G=A\ast_CB$, the groups $G$, $A$ and $B$ are hyperbolic relative to  $C$ and the space $\partial (G,C)$ is connected. The spaces  $\partial (A,C)$  and $\partial (B,C)$ are connected, locally connected do not have cut points.
Assume that $X$ is the cusped space for $(G,C)$ derived from a finite presentation that has generators $S_A$, $S_B$ and $S_C$ for $A$, $B$ and $C$ respectively. Since $\partial X$ is connected, $X$ is 1-ended. 
The boundary of $X$ is a tree of spaces with additional {\it ideal} points. The tree $\mathcal T$ is the Bass-Serre tree for $A\ast_CB$ (or the graph of group decomposition of $G$ in general).  
Let $vH$ be a vertex group of $\mathcal T$ (so $v\in G$ and $H$ is either $A$ or $B$). Let $Z(vH)$ be the limit set of $vH\subset X$. 
Then $\partial X$ is the union of the $Z(vH)$ along with the ideal points. (In the case $G=A\ast_VB$ is hyperbolic relative to $B$, each $Z(vB)$ is a single point. When $G=A\ast_C$, $X$ is the union of the $Z(vA)$ along with ideal points.) The distinct sets $Z(vA)$ and $Z(wB)$ intersect non-trivially if and only if  $vA\cap wB\ne \emptyset$ if and only if there is $u\in G$ such that $vA\cap wB =uC$. (If $G=A\ast_C$, then the distinct sets $Z(vA)$ and $Z(wA)$ intersect non-trivially if and only if there is $u\in G$ such that $vA=uA$ and $wA=utA$ where $t$ is the stable letter of $A\ast_C$.) 
Each $uC$ has limit set equal to a cut point in $\partial X$ which belongs to and separates the sets $Z(vA)$ and $Z(wB)$. (In the case $G=A\ast_C$, $A$ contains (isomorphic) associated subgroups $C_1$ and $C_2$ and the stable letter $t$ of the HNN extension conjugates $C_1$ to $C_2$. Then for any $u\in G$, $Z(uC_1)=Z(utC_1)$ is a cut point in $\partial X$ separating $Z(uA)$ and $Z(utA)$.) Since $Z(vA)$ and $Z(vB)$ are homeomorphic to $\partial (A, C)$ and $\partial (B,C)$ respectively, $Z(vA)$ and $Z(wB)$ contain no cut points. We use $d$ for the metric on $X$ and $d_V$ for the visual metric on $\partial X$. 

The proof in one direction of the next result uses the fact that $\partial (A,C)$ and $\partial (B,C)$ do not have cut points. 
\begin{lemma}\label{cut1}
Suppose $x\ne y\in\partial X$, and $l$ is a geodesic line in $X$ from $x$ to $y$. 
Then the coset $vC$ of $X$ separates the ends of the line $l$ if and only if the limit set of $vC$ (a single point) separates $x$ and $y$ in $\partial X$. 
\end{lemma}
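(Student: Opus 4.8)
The plan is to prove the two implications separately, using the structure of $\partial X$ as a tree of spaces and the fact that each $Z(vH)$ is homeomorphic to $\partial(H,C)$, hence cut-point-free.

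For the \emph{if} direction, suppose the limit point $c$ of $vC$ separates $x$ and $y$ in $\partial X$ but $vC$ does not separate the ends of $l$. I would argue by contradiction: if the coset $vC$ does not separate $l$ in $X$, then (using quasi-convexity of the horoball for $vC$, Lemma \ref{QC}, and Lemma \ref{Deep}) the line $l$ can be homotoped off the horoball $H_{vC}$, or more precisely, $l$ stays outside some horoball $H_{vC}^N$ and one can push it to a line avoiding the full horoball. Consequently $l^-$ and $l^+$ lie in the same complementary component of the separating subset determined by $vC$, and the two endpoints $x,y$ lie in the same component of $\partial X \setminus \{c\}$ — contradiction. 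The key point is that in a cusped space the peripheral coset $vC$ (together with its horoball) is a genuinely separating subset of $X$ whenever its limit point is a cut point, so a geodesic line joining points on opposite sides of that cut point must cross the horoball, and by convexity of $H_{vC}^\delta$ (Lemma \ref{geo}) it must come within bounded distance of $vC$ itself.

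For the \emph{only if} direction — and this is where the cut-point-freeness of $\partial(A,C)$ and $\partial(B,C)$ is used — suppose $vC$ separates the ends of $l$ but its limit point $c$ does \emph{not} separate $x$ and $y$. Then there is a path (or an arc) $\sigma$ in $\partial X \setminus \{c\}$ from $x$ to $y$. Using that $\partial X$ is the tree of spaces glued along the cut points $Z(uC)$, the arc $\sigma$ lies in the union of finitely many (or countably many, in a controlled way) of the pieces $Z(wH)$, none of which is cut at $c$ since each $Z(wH)$ has no cut point and $c$ is at most one of its gluing points. One then lifts this arc back to a collection of geodesic lines / quasi-geodesics in $X$, each staying in the quasi-convex hull of a single vertex space $wH$, and concatenates them at the cut points other than $c$; since none of these pieces needs to cross the horoball $H_{vC}$, one builds a line from $x$ to $y$ in $X$ avoiding $vC$. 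But two geodesic lines with the same endpoints are uniformly close (thin bigons in a hyperbolic space), so $l$ itself cannot be separated by $vC$ — contradiction. Here the no-cut-point hypothesis is exactly what guarantees that $c$'s removal does not disconnect the individual pieces, so the arc in $\partial X\setminus\{c\}$ can be assembled from connected pieces each living over a single vertex of the Bass–Serre tree.

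The main obstacle I expect is the bookkeeping in the \emph{only if} direction: making precise the correspondence between an arc in $\partial X$ that avoids the cut point $c$ and an actual geodesic line in $X$ that avoids the horoball of $vC$, and controlling how the arc passes through the (possibly infinitely many) other cut points along the way in the Bass–Serre tree. The cleanest route is probably to work entirely on the level of the Bass–Serre tree $\mathcal T$: the cut points separating $x$ and $y$ correspond to the edges of the geodesic segment (or ray/line) in $\mathcal T$ between the vertex-orbits carrying $x$ and $y$, and $vC$ separates the ends of $l$ precisely when the edge labelled $vC$ lies on that segment — which happens precisely when the limit point of $vC$ is one of the $c_i \in C_{(x,y)}$, i.e. precisely when it separates $x$ and $y$. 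Reducing the whole statement to this combinatorial fact about $\mathcal T$, and then invoking convexity of horoballs (Lemma \ref{geo}) and quasi-convexity (Lemma \ref{QC}) only to translate ``edge on the segment'' into ``coset separates the line,'' should handle both directions uniformly and avoid the delicate arc-lifting argument.
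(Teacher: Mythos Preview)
Your plan has the two directions' roles reversed, and in consequence the argument for the ``if'' direction has a genuine gap.

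For the direction ``$vC$ separates the ends of $l$ $\Rightarrow$ $c$ separates $x,y$'', the paper's proof is much simpler than your arc-lifting scheme and does \emph{not} use cut-point-freeness of $\partial(A,C)$ or $\partial(B,C)$. One just assumes a path $\alpha:[0,1]\to\partial X\setminus\{c\}$ exists, represents each $\alpha(t)$ by a ray $a_t$ from the basepoint, observes that for all large $j$ the points $a_t(j)$ stay out of a fixed neighborhood of $vC$ (otherwise $c$ would be in the image of $\alpha$), and then concatenates short segments between consecutive $a_{t_i}(k_1)$ to get a path in $X$ from one end of $l$ to the other that avoids $vC$. No tree structure, no lifting of arcs through vertex spaces, and no hypothesis on cut points is needed.

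Where you do need cut-point-freeness is the other direction, ``$c$ separates $x,y$ $\Rightarrow$ $vC$ separates the ends of $l$'', and your proposed horoball argument does not deliver it. You argue: if $vC$ does not separate the ends of $l$, push $l$ off the horoball, conclude $l^-,l^+$ lie in the same complementary component of $vC$ in $X$, hence $x,y$ lie in the same component of $\partial X\setminus\{c\}$. The last step is exactly the content of the lemma and is not justified by anything you have said: knowing two ends lie on the same side of $vC$ in $X$ does not, by itself, put the corresponding boundary points in the same component of $\partial X\setminus\{c\}$. The paper instead lets $S=\{v_iC\}$ be the full ordered set of cosets separating the ends of $l$, builds the connected sets $L_i=Z(v_iV_i)$ chained together at the $c_i$, and uses that each $L_i$ has no cut point to show (i) each $c_i$ genuinely separates $x$ from $y$, and (ii) any $c$ separating $x$ from $y$ must equal some $c_i$. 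Your final paragraph about reducing to the Bass--Serre tree is heading in this direction, but you have not identified that the crux is proving the cut points of $\partial X$ separating $x$ from $y$ are \emph{exactly} the $c_i$ coming from $S$, and that this is precisely where the no-cut-point hypothesis enters.
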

\begin{proof}
Suppose $vC$ separates the ends of the line $l$ (so that there is an integer $k$ such that $l([k,\infty))$ and $l((-\infty,-k])$ are in different components of $X-vC$). Suppose there is a path $\alpha$ in $\partial X$ from $x$ to $y$ avoiding $c$, the limit set of $vC$. Let $a_t$ be a geodesic ray based at $p$, so that  $a_t\in \alpha(t)$ (so $a_0\in x$ and $a_1\in y$. There must be an integer $m$ such that for $t\in [0,1]$ and all $j\geq m$, $a_t(j)$ is not in the $\delta$-neighborhood of $vC$ (otherwise $c$ is in the limit set of the union of the images of the $a_t$, which is the image of $\alpha$). Choose $k_1$ and $k_1'$ such that $d(a_0(k_1), l(-k_1'))\leq \delta$,  $d(a_1(k_1), l(k_1'))\leq \delta$, $k_1'\geq k$ and $k_1>m$.
Choose a sequence of points $r_0=a_0,\ldots , r_n=a_1$ so that $d(r_i(k_1),r_{i+1}(k_1))\leq \delta$. Let $\alpha_i$ be a path of length $\leq \delta$ from $r_i(k_1)$ to $r_{i+1}(k_1)$. Let $\beta_0$ be a path of length $\leq \delta$ from $l(-k_1')$ to $r(k_1)$ and $\beta_1$ be a path of length $\leq \delta$ from $l(k_1')$ to $s(k_1)$. The path $(\beta_0, \alpha_0,\ldots, \alpha_{n-1}, \beta_1^{-1})$ from $l(-k_1')$ to $l(k_1')$ avoids $vC$, contrary to our assumption. This proves the first half of our lemma.

Next suppose $S=\{\ldots, v_{-1}C,v_0C,v_1C,\ldots\}$ is the set of all cosets (subsets of $X$) that separate the ends of $l$, ordered according the Bass-Serre tree structure of $A\ast_CB$ (this set might be finite, infinite or bi-infinite, depending on whether $x$ and $y$ are ideal points or belong to $gA$ or $gB$ for some $g\in G$). We consider the case $S=\{v_0C, v_1C,\ldots\}$ (all other cases can be resolved by the techniques used in this case). See Figure 8.

\vspace {.4in}
\vbox to 3in{\vspace {-2in} \hspace {-1.8in}
\includegraphics[scale=1]{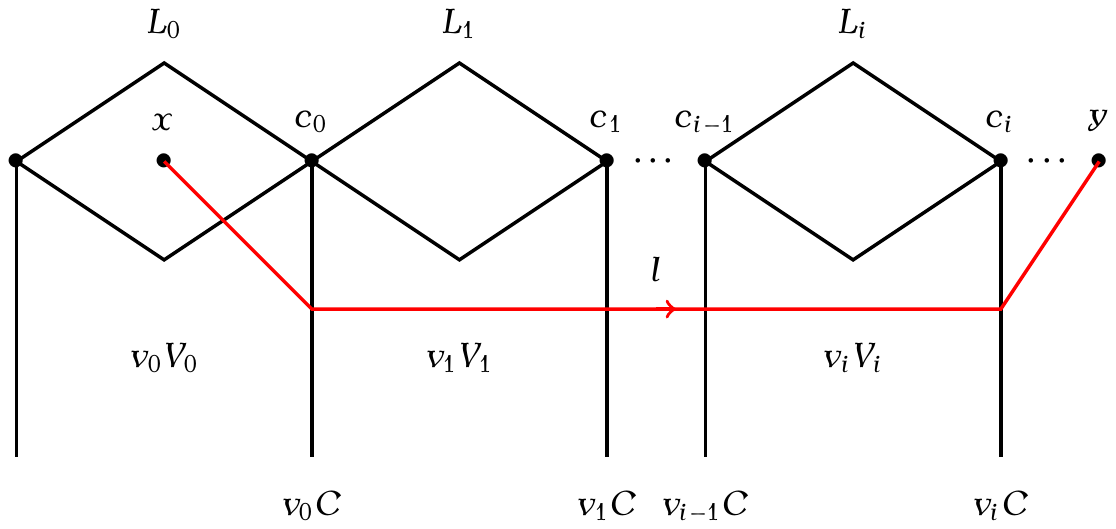}
\vss }
\vspace{-.9in}

\centerline{Figure 8}

\medskip

Note that (for $V_i$ alternating between $A$ and $B$) $v_0C$ and an initial segment of $l$ belong to $v_{0}V_0$, 
 and for $i> 0$, $v_{i-1}C\cup v_{i}C\subset v_{i}V_i$.  Let $L_i$ be the limit set of $v_iV_i$ (so $L_i$ is homeomorphic to either $\partial (A,C)$ or $\partial(B,C)$), and $c_i$ be the cut point of $\partial X$ corresponding to $v_iC$. Then $x,c_0\in L_0$,  $c_{i-1},c_{i}\in L_{i}$, and $L_i\cap L_{i+1}=c_i$. Recall that no point of $L_i$ separates $L_i$, so $L_i-\{c_{i-1}\}$ and $L_i-\{c_{i}\}$ are connected. Then $c_i$ separates the connected sets $L_0\cup \cdots L_{i-1} \cup (L_{i}-\{c_i\})$ and $\{y\}\cup (L_{i+1}-\{c_i\})\cup L_{i+2}\cup \cdots$. Now $L=\{y\}\cup L_0\cup L_1\cdots$ is a connected set containing $x$ and $y$. Suppose $c$ separates $x$ and $y$ in $\partial X$. We must show that $c=c_i$ for some $i$. Suppose $c\ne c_i$ for all $i$. Certainly $c\in L$ and so $c\in L_i$ for some $i$. If $c\in L_0$, then $c$ does not separate $L_0$ (no $L_i$ has a cut point). But then $x$ and $y$ belong to $ \{y\}\cup (L_0-\{c\})\cup C_1\cup C_2\cup\cdots$ a connected set, contrary to our assumption that $c$ separates $x$ and $y$. If $c\in L_i$ for $i>0$, $c$ does not separate $L_i$ so $x$ and $y$ belong to the connected set $\{y\}\cup L_0\cup \cdots L_{i-1} \cup (L_i-\{c\})\cup L_{i+1}\cup \cdots$ contrary to our assumption that $c$ separates $x$ and $y$. (In the general graph of groups case, the only difference is that the members of the set $S$ are cosets of various edge groups.) 
\end{proof}
Suppose $x,y\in \partial X$. The Bass-Serre tree $\mathcal T$ for the decomposition $A\ast_CB$ gives a unique (possibly bi-infinite) ordering of the set of cut points $C_{x,y}=\{\ldots , c_{-1}, c_0, c_1,\ldots \}$ of $\partial X$ that separates $x$ from $y$, where the sets $\{x\}\cup \{\ldots, c_{i-2},c_{i-1}\}$ and $\{c_{i+1},c_{i+1},\ldots\}\cup \{y\}$ belong to distinct components of $\partial X-\{c_i\}$ for all $i$. Observe that $\{c_i,c_{i+1}\}$ is a subset of the limit set of $v_iH_i$ for some $v_i\in G$ and $H_i\in\{A,B\}$. Also, $H_i\ne H_{i+1}$ (so the $H_i$ alternate between $A$ and $B$).

\begin{lemma}\label{Tree}
Suppose $r$ is a geodesic ray in $X$. Then $r$ determines a geodesic ray $\hat r$ in $\mathcal T$ such that $r$ crosses  $gC$ (begins on one side of $gC$ and eventually ends up on another side)  if and only if $\hat r$ contains the edge $gC$ of $\mathcal T$ (unless $gC$ is the first edge of $\hat r$). 
\end{lemma}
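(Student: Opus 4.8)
\textbf{Proof plan for Lemma~\ref{Tree}.}
The plan is to use the coarse geometry of the cusped space $X$ over the Bass--Serre tree $\mathcal T$. There is a $G$-equivariant coarsely Lipschitz map $\pi\colon X\to \mathcal T$ (send a vertex $w\in G$ to the vertex $wH$ of $\mathcal T$ closest to it, i.e.\ to the vertex group whose coset contains $w$, and extend; horoball vertices over $gC$ map to the edge $gC$ or its incident vertices). The edges of $\mathcal T$ correspond to the cosets $gC$, and removing the (thickened) coset $gC$ from $X$ disconnects $X$ into the two ``sides'' that are the preimages of the two components of $\mathcal T - \{gC\}$; this is essentially the content already used in Lemma~\ref{cut1}. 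So the combinatorial datum I want to extract from the geodesic ray $r$ is the sequence of edges of $\mathcal T$ whose corresponding cosets $r$ crosses, in the order in which $r$ crosses them.

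First I would make precise what ``crosses $gC$'' means: $r$ starts (eventually) in one component of $X - N_\delta(gC)$ and ends (eventually) in the other, equivalently $gC$ separates the single boundary point $[r]$ from the basepoint $\ast$ in the appropriate sense, or more concretely $r$ has a subsegment running from one side of $gC$ to the other after which it never returns. Using quasi-convexity of horoballs (Lemma~\ref{QC}) and the fact that $X$ is one-ended, the set of cosets $gC$ that $r$ crosses is totally ordered by $\mathcal T$: if $r$ crosses $g_1C$ and then later $g_2C$, the edges $g_1C$, $g_2C$ lie on a common ray in $\mathcal T$ issuing from the vertex containing $\ast$, with $g_1C$ closer to the base vertex. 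The key point is that $r$ cannot cross the same coset twice (a geodesic cannot leave and re-enter a side of a quasi-convex separating set and then leave again without backtracking more than the tracking constant allows) and cannot ``skip'' an edge of $\mathcal T$: if $r$ crosses $g_2C$ and $g_1C$ lies between the base vertex and the edge $g_2C$ in $\mathcal T$, then $r$ must also cross $g_1C$, because any path in $X$ from the base side of $g_1C$ to the far side of $g_2C$ meets $g_1C$. This produces a monotone sequence of edges in $\mathcal T$, hence a (finite or infinite) ray $\hat r$ in $\mathcal T$ starting at the vertex of $\mathcal T$ containing $\ast$ (or at the vertex containing $r(0)$, up to bounded adjustment — this is where the parenthetical exception ``unless $gC$ is the first edge of $\hat r$'' comes from, since the very first coset crossed may be the one at the base vertex and the tracking needs care there).

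The reverse implication — that every edge of $\hat r$ other than possibly the first really is a coset crossed by $r$ — is built into the construction: $\hat r$ is defined to be exactly the union of the edges $gC$ crossed by $r$ (plus the unique shortest arc in $\mathcal T$ connecting them, which I must check is forced, i.e.\ the crossed edges together with the base vertex span a ray and not a more complicated subtree; this follows again from ``no skipping'' and ``no backtracking''). Finally I would check that $\hat r$ really is a \emph{ray}, i.e.\ infinite unless $[r]$ lies in some $Z(vH)$: if $r$ crosses only finitely many cosets, then from some point on $r$ stays in a single translate $vHV$, and conversely if $[r]\in Z(vH)$ then by quasi-convexity of $vH$ (Theorem~\ref{sub} setting, inclusion $Y\hookrightarrow X$ a quasi-isometric embedding) a subray of $r$ tracks $vH$ and crosses no further cosets.

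\textbf{Main obstacle.} The hard part is the ``no backtracking / no skipping'' rigidity: turning the soft statement ``$r$ separates into two sides of $gC$'' into the precise claim that the crossed edges form a \emph{monotone} sequence along a single ray of $\mathcal T$, with no repeats and no gaps. This requires combining the $\delta$-thin-triangles tracking of $r$ by its sub-geodesics with the quasi-convexity estimates of Lemmas~\ref{QC} and~\ref{Deep} to rule out a geodesic re-entering a horoball-neighborhood it has already exited; the bookkeeping of the additive constants (and isolating the harmless exceptional first edge) is the only genuinely delicate point, everything else being formal tree combinatorics.
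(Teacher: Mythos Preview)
The paper states Lemma~\ref{Tree} without proof, so there is nothing to compare your argument against directly; the authors evidently regard it as a standard Bass--Serre fact and use it only as a bookkeeping device in \S\ref{dV=dL}. Your outline is correct and would certainly yield a proof.

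That said, you are making the ``main obstacle'' harder than it is. The no-skipping and no-backtracking properties do not require any hyperbolic geometry (thin triangles, Lemma~\ref{QC}, Lemma~\ref{Deep}); they follow from pure tree combinatorics together with the single fact that each coset $gC$ separates $X$ into the preimages of the two components of $\mathcal T\setminus\{gC\}$. Concretely: say $r$ \emph{crosses} $gC$ if $r(0)$ and the tail of $r$ lie in different components of $X\setminus gC$. Then the set $S$ of crossed edges is automatically totally ordered by the tree order based at the vertex $v_0$ containing $\ast$ (if $e,e'\in S$ and neither separates $v_0$ from the other in $\mathcal T$, then $r(0)$ and the tail of $r$ lie on the same side of whichever of $e,e'$ is farther from the line through the other, contradicting crossing). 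No-skipping is immediate: if $e'$ separates $v_0$ from $e\in S$ in $\mathcal T$, then $e'$ separates $r(0)$ from anything beyond $e$ in $X$, so $e'\in S$. Thus $S$ is a (finite or infinite) segment of $\mathcal T$ issuing from $v_0$, and one simply \emph{defines} $\hat r$ to be that segment. There is no need to control how many times the geodesic $r$ physically re-enters a neighborhood of a horoball; only the side on which the tail eventually lies matters, and that is well defined because $r$ is proper.

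The parenthetical exception is exactly what you say: if $\ast\in gC$ then $r$ does not ``begin on one side'' of $gC$, so the first edge of $\hat r$ need not be crossed in the strict sense. Your remark that $\hat r$ may be finite (when $[r]$ lies in some $Z(vH)$) is also correct; the paper uses ``ray'' loosely here, as is visible in the line $\hat r_1=(e_1,\ldots,e_n,f_{-1},f_{-2},\ldots)$ just after the lemma is invoked.
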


In order to define our metric on $\partial X$ we must consider three cases and show the corresponding series converge. 

\begin{definition} \label{pvmetric} 
Let $d$ denote the metric on $X$. Let $d_V\equiv d_\ast$ be a visual (inner product) metric on $\partial X$, based at $\ast$ (the identity vertex of $X$); so there are constants $k_1$ and $k_2$ such that if $x,y\in \partial X$, then $k_1 e^{-(x. y)_\ast}\leq d_V(x,y)\leq k_2e^{-(x. y)_\ast}$. Another (potential) metric is now defined on $\partial X$. We need to consider 3 cases. 

(1) If neither $x$ nor $y$ is an ideal point, then $C_{x,y}=\{c_0,c_1,\ldots, c_n\}$ is finite. Define $d_L(x,y)=d_V(x,c_0) +d_V(c_0,c_1)+\cdots +d_V(c_{n-1},c_n)+d_V(c_n,y)$. In particular, if $x$ and $y$ belong to the limit set of $vH$ for $v\in G$ and $H\in \{A,B\}$, then $d_L(x,y)=d_V(x,y)$.

(2) If $x$ is ideal and $y$ is not, then $C_{x,y}=\{\ldots, c_{-1}, c_0\}$ and we define $d_L(x,y)=(\sum_{i=0}^{-\infty} d_V(c_i,c_{i-1})+d_V(c_0,y))$. Similarly if $y$ is ideal and $x$ is not. 

(3) If both $x$ and $y$ are ideal, then $C_{x,y}=\{\ldots, c_{-1}, c_0,c_1,\ldots \}$ and we define $d_L(x,y)=\sum_{i=-\infty}^{\infty} d_V(c_i,c_{i-1})$. 
\end{definition}

Note that if $x,y,z\in \partial X$, then $C_{x,z}$ is an initial segment of $C_{x,y}$ followed by a terminal segment of $C_{y,z}$, so that if all series in the above definition converge, then $d_L$ is indeed a metric (see Lemma \ref{conv}).

\begin{lemma}\label{separate} 
Suppose $a_1=[r_1]$, and  $a_2=[r_2]$ are distinct points of   $\partial X$ (based at $p\in X$) and $l$ is a geodesic line from $a_1$ to $a_2$. Let $z_1$ on $r_1$, $z_2$ on $r_2$ and $z_3$ on $l$ be internal points of $\triangle (r_1,r_2,l)$ (see Lemma \ref{Yes}).   If $b$ is a vertex of $l$ between $z_3$ and $a_1$  and $d(z_3,b)=K$, then for any point $y$ of $r_2$, $d(y,b)\geq K-2\delta$.
\end{lemma}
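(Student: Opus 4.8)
The plan is to feed the hypotheses directly into the tracking estimates of Lemma \ref{Yes}, in the sharpened form recorded in Remark \ref{sub1}. First I would fix $m=(a_1.a_2)_p$ and parametrize $l$ so that $l(0)=z_3$, with $l^-\in a_1$ and $l^+\in a_2$; then $z_1=r_1(m)$, $z_2=r_2(m)$, and by Remark \ref{sub1} we may assume $d(l(-j),r_1(m+j))\le\delta$ and $d(l(j),r_2(m+j))\le\delta$ for all $j\ge 0$, together with $d(r_1(m-j),r_2(m-j))\le\delta$ for all $j\ge 0$. Since $b$ lies on $l$ between $z_3$ and $a_1$ and $d(z_3,b)=K$, we have $b=l(-K)$, and in particular $d(b,r_1(m+K))\le\delta$. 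If $K\le 2\delta$ the conclusion is automatic, so one may assume $K$ large.

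Next I would take an arbitrary point $y=r_2(t)$ of $r_2$, $t\ge 0$, and split on whether $t\ge m$ or $t\le m$. If $t\ge m$, then $d(y,l(t-m))\le\delta$; since $l$ is a geodesic line, $d(l(t-m),b)=d(l(t-m),l(-K))=(t-m)+K$, so the triangle inequality gives $d(y,b)\ge (t-m)+K-\delta\ge K-\delta$. If instead $0\le t\le m$, then applying the third estimate with $j=m-t$ gives $d(y,r_1(t))\le\delta$, hence $d(y,b)\ge d(r_1(t),b)-\delta$; and since $r_1$ is geodesic with $t\le m\le m+K$, we have $d(r_1(t),r_1(m+K))=(m+K)-t$, so combining with $d(r_1(m+K),b)\le\delta$ yields $d(r_1(t),b)\ge (m+K)-t-\delta$ and therefore $d(y,b)\ge (m+K)-t-2\delta\ge K-2\delta$. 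In either case $d(y,b)\ge K-2\delta$, which is the claim.

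I do not expect a genuine obstacle here; the only point requiring care is the case distinction. The comparison ``$y$ is within $\delta$ of $r_1(t)$'' is available only for parameters $t\le m$, i.e.\ before the internal point, while for $t\ge m$ the ray $r_2$ has already split off from $r_1$ and one must instead compare $y$ with the forward half of $l$. The slightly lossy constant $K-2\delta$ (rather than $K-\delta$) is exactly what the $t\le m$ case forces, since there two separate $\delta$-estimates must be chained together.
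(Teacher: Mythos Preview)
Your proof is correct and follows essentially the same approach as the paper: both split into the two cases $y\in(z_2,a_2)$ versus $y\in(p,z_2)$ and use the $\delta$-thinness estimates of Lemma~\ref{Yes}/Remark~\ref{sub1}, comparing $y$ to a point on $l$ in the first case and to a point on $r_1$ in the second. The only cosmetic difference is that the paper phrases each case as a contradiction via ``corresponding points'' of the ideal triangle, while you work directly with the parametrizations and the triangle inequality.
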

\begin{proof}
If $y$ is a point of $(z_2,a_2)$ and $d(y,b)<K-2\delta$, let $y'$ be the corresponding point of $(z_3,a_2)$ so that $d(y,y')\leq \delta$. Then $d(y',b)<K-\delta$ which is nonsense. See Figure 9.

\vspace {.4in}
\vbox to 3in{\vspace {-2in} \hspace {-1.2in}
\includegraphics[scale=1]{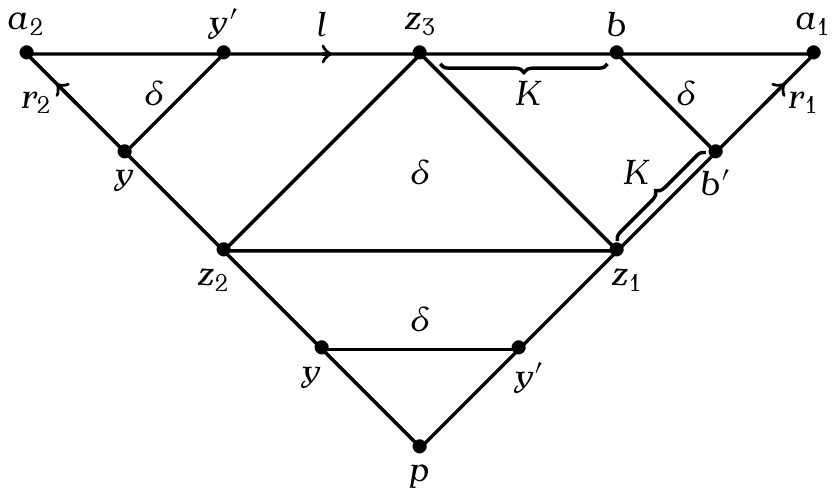}
\vss }
\vspace{-1.2in}

\centerline{Figure 9}

\medskip

If $y$ is a point of $(z_2, p)$ and $d(y,b)<K-2\delta$, let $y'$ be the corresponding point to $y$ on $(z_1,p)$ and $b'$ be the corresponding point to $b$ on $(z_1,a_1)$. Then  $d(y,y')\leq \delta$, $d(b,b')\leq \delta$ and $d(b',z_1)=K$. Then $K\leq d(b', y') \leq d(b',b)+d(b,y)+d(y,y')< K$. 
\end{proof}

Recall that $\mathcal T$ is the Bass-Serre tree for $A\ast_CB$ with vertices labeled $gA$ and $gB$ for $g\in G$ and edges labeled $gC$. For $D\in \{A, B,C\}$ and $g\in G$, the stabilizer of $gD$ is $gDg^{-1}$. 
Say $gA\ (gB)$ is a vertex of $\mathcal T$, then there is a unique edge of $\mathcal T$ containing the vertex $gA\ (gB)$ and separating it from a vertex of $C$.  (If  $gA\ne A$ this edge separates $gA$ from both vertices of $C$).

\begin{lemma}\label{approx} 
Let $g$ be an element of $G$ and $q\in gC$ be a closest point of $gC$ to $\ast$. Suppose the edge $qC$ of $\mathcal T$ (with vertices $qA$ and $qB$) separates $qA$ from a vertex of the edge $C$. Then for any distinct points $a_1,a_2$ in the limit set $Z (qA)\subset \partial X$:
$$ d(\ast, q)+(a_1.a_2)_q -(26\delta+12)\leq (a_1. a_2)_\ast\leq d(\ast, q)+(a_1 a_2)_q +(26\delta+12)$$
Equivalently:
$$e^{-d(\ast,q)}e^{-(a_1. a_2)_q} e^{26\delta+12}\geq e^{-(a_1. a_2)_\ast}
\geq  e^{-d(\ast,q)}e^{-(a_1. a_2)_q} e^{-(26\delta+12)}$$
\end{lemma}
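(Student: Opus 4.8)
\textbf{Proof plan for Lemma \ref{approx}.}

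The plan is to compare the inner product $(a_1.a_2)_\ast$ computed from the basepoint $\ast$ with the inner product $(a_1.a_2)_q$ computed from the point $q$, using the fact that $q$ is a ``gateway'' through which every geodesic ray from $\ast$ to a point of $Z(qA)$ must essentially pass. First I would fix geodesic rays $r_1, r_2$ from $\ast$ converging to $a_1, a_2$ and a geodesic line $l$ from $a_1$ to $a_2$, and I would invoke Lemma \ref{Yes} (in the sharpened form of Remark \ref{sub1}) to locate internal points $z_1 = r_1(m)$, $z_2 = r_2(m)$, $z_3$ on $l$ of the ideal triangle $\triangle(r_1,r_2,l)$, where $m = (a_1.a_2)_\ast$. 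The key geometric input is that, since $a_1, a_2 \in Z(qA)$ and $qA$ is on one side of the edge $qC$, the line $l$ stays (coarsely) on the $qA$-side of $gC$, hence is uniformly close to the horoball-complement region near $q$; more precisely any geodesic or geodesic ray from $\ast$ into $Z(qA)$ must come within a bounded distance of $q$. This is exactly the content supplied by Lemmas \ref{close}, \ref{QC} and \ref{Deep}: $q$ is a closest point of $gC$ to $\ast$, so a geodesic from $\ast$ to $gC$ meeting $gC$ only at its endpoint lands within $6\delta+4$ of $q$ (Lemma \ref{close}), and the horoball/coset quasi-convexity lemmas control how the rays $r_1, r_2, l$ behave near $q$.

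The main steps, in order, are: (i) show that $d(\ast,q) \le m$ up to a bounded additive error, i.e. that the internal points $z_1, z_2$ lie (coarsely) beyond $q$ along $r_1, r_2$ — this uses that $r_1$ and $r_2$ both pass within a bounded distance of $q$ before diverging into $Z(qA)$, together with the separation statement of Lemma \ref{separate} applied to the internal point $z_3$ and the line $l$; (ii) build from $r_1, r_2$ new geodesic rays $r_1', r_2'$ based at $q$ converging to $a_1, a_2$, by concatenating a bounded geodesic from $q$ to a point of $r_i$ near $q$ with the tail of $r_i$ and straightening — the tracking constant for quasigeodesics-versus-geodesics (Proposition 3.3 of \cite{ABC91}, as used in \S\ref{QIE}) keeps everything within a bounded neighborhood; (iii) compute $(a_1.a_2)_q$ using these rays: the point where $r_1', r_2'$ first separate is within a bounded distance of the point where $r_1, r_2$ first separate (which is near $z_1 \approx z_2$), and subtracting off the segment of length $\approx d(\ast,q)$ that $r_1, r_2$ share before reaching $q$ gives $(a_1.a_2)_q \approx m - d(\ast,q)$; (iv) collect all the additive error terms — each of the form $c\delta + c'$ coming from the thin-triangle constant, the $6\delta+4$ of Lemma \ref{close}, the $2\delta+1$ of Lemma \ref{tight}, the $2\delta$-neighborhoods in Lemmas \ref{QC}/\ref{parallel}, and the quasigeodesic tracking — and check the total is at most $26\delta+12$. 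The final ``equivalently'' line is then immediate by exponentiating.

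I expect the main obstacle to be step (i): rigorously pinning down that the divergence point of $r_1$ and $r_2$ occurs (coarsely) \emph{after} the point $q$ — equivalently that $m \ge d(\ast,q) - O(\delta)$ — rather than the rays splitting earlier. The subtlety is that $r_1$ and $r_2$ need not physically pass through $q$; they pass near $qC$, and one must use that the part of each $r_i$ lying on the $\ast$-side of $qC$ is forced (by quasi-convexity of the relevant horoball and by $q$ being the closest point of $gC$ to $\ast$, via Lemma \ref{close} and Lemma \ref{tight}) to be uniformly close to \emph{a single} geodesic segment $[\ast,q]$, so the two rays genuinely fellow-travel until they get near $q$. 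Combined with Lemma \ref{separate}, which says that once the line $l$ (hence the boundary structure) has separated past the internal point, points of $r_2$ stay far from the earlier portion of $l$, this forces the internal point $z_3$ — and hence $m$ — to sit at depth at least $d(\ast,q)$ minus a controlled error. The bookkeeping of which lemma contributes which constant to the bound $26\delta+12$ is the routine-but-delicate remainder.
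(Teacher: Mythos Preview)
Your plan is correct and close in spirit to the paper's, but the paper's execution is organized around one device you don't mention, and that device is what makes the bookkeeping clean enough to land on $26\delta+12$. Rather than building rays from $q$ by concatenating a short segment with the tail of $r_i$ and straightening (which drags in a quasigeodesic--tracking constant you don't need), the paper takes genuine geodesic rays $s_1,s_2$ from $q$ to $a_1,a_2$ and works with \emph{two} ideal triangles that share the same third side $l$: the triangle $\triangle(r_1,r_2,l)$ with internal points $z_1,z_2,z_3$ and the triangle $\triangle(s_1,s_2,l)$ with internal points $w_1,w_2,w_3$. Since $z_3$ and $w_3$ both lie on $l$, one may assume without loss that $z_3$ lies between $w_3$ and $a_2$ and then apply Lemma~\ref{separate} to $\triangle(r_1,r_2,l)$ with $b=w_3$ to get $d(w_3,z_3)\le 10\delta+4$ in one stroke. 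From there two short triangle-inequality chains (using only $d(p_i,q)\le 6\delta+4$ from Lemma~\ref{close} and the $\delta$-thinness of both ideal triangles) give the two bounds. Your step (i) --- showing the divergence point of $r_1,r_2$ lies past $q$ --- is largely absorbed into this shared-line argument; a vestige of it survives when the paper writes $d(\ast,p_2)+d(p_2,z_2)=(a_1.a_2)_\ast$, but it is no longer the central obstacle. Your route would work, but the two-triangles-on-one-line trick is what buys the paper its tight constant without any quasigeodesic detour.
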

\begin{proof}
For $i\in \{1,2\}$ let $r_i$ be a geodesic ray at $\ast\in X$ converging to $a_i$ and let $s_i$ be a geodesic ray at $q\in X$ converging to $a_i$. Let $p_i$ be the first point of $r_i$ in $gC$. By Lemma \ref{close}, $d(p_i,q)\leq 6\delta+4$ for $i\in \{1,2\}$. This implies:

(1) Each point of $s_1$ is within $7\delta+4$ of a point of the subsegment $(p_1, a_1)$ of $r_1$, and each point of $(p_1,a_1)$ is within $7\delta+4$ of a point of $s_1$. Similarly for $s_2$ and $(p_2,a_2)$. 

If $l$ is a  geodesic line in $X$ from $a_1$ to $a_2$, then Lemma \ref{Yes} gives a vertex $w_3\in l$, such that $w_1$ on $r_1$, $w_2$ on $r_2$ and $w_3$ on $ l$  are internal points of the ideal geodesic triangle $\triangle (s_1,s_2,l)$. See Figure 10. 

\vspace {.4in}
\vbox to 3in{\vspace {-2in} \hspace {-1.6in}
\includegraphics[scale=1]{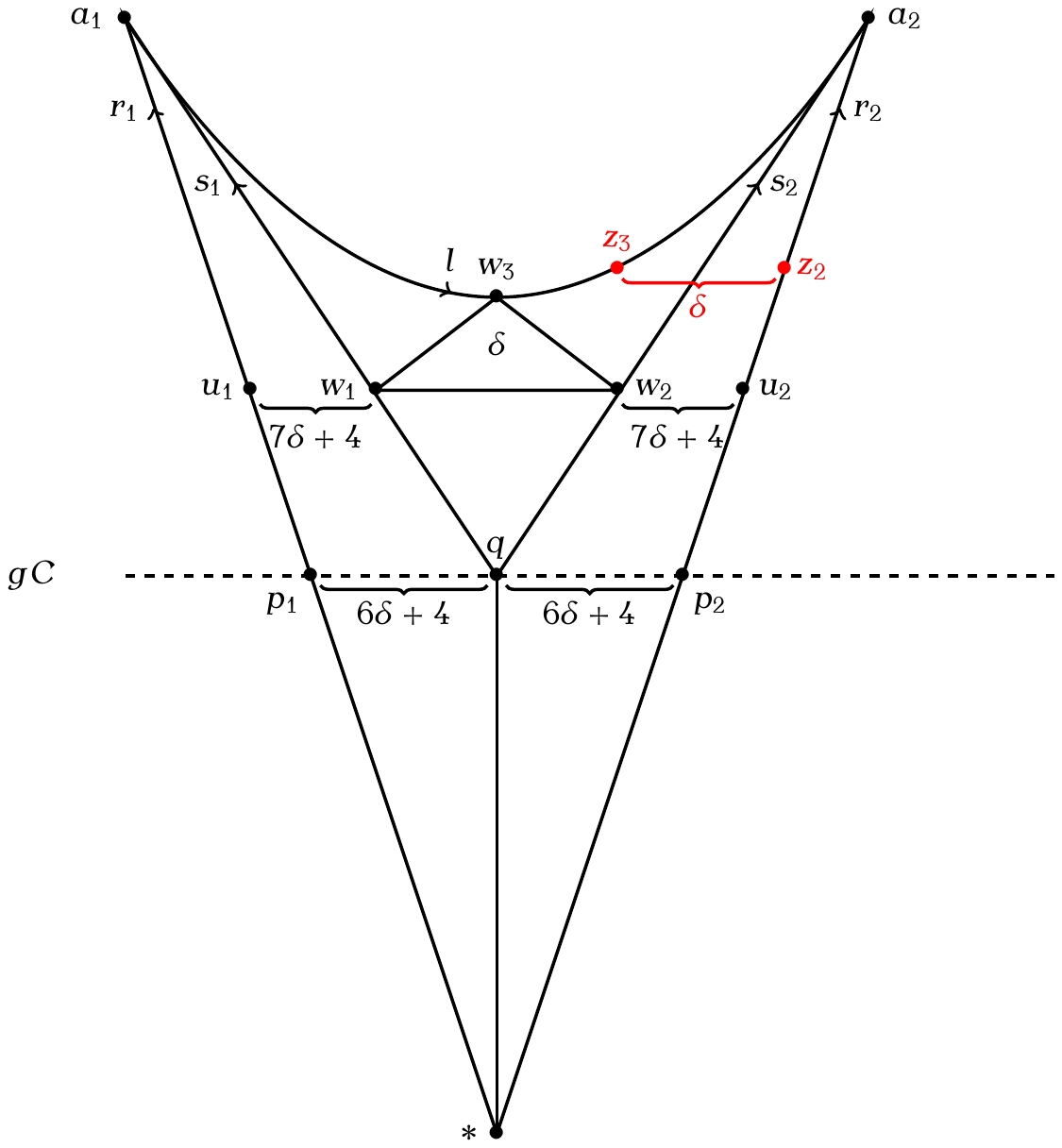}
\vss }
\vspace{1.5in}

\centerline{Figure 10}

\medskip

Then for $i,j\in\{1,2,3\}$, $d(w_i,w_j)\leq \delta$, (see Remark \ref{sub1}) and: 
$$(a_1. a_2)_q=d(q,w_1)=d(q,w_2)$$
By (1), if $i\in \{1,2\}$, there is a point $u_i$ on $r_i$ such that $d(u_i,w_i)\leq 7\delta+4$. 
Let $\{z_1,z_2,z_3\}$ be internal points of the ideal geodesic triangle with sides $r_1$, $r_2$ and $l$, with  $z_1$ on $r_1$, $z_2$ on $r_2$  and $z_3$ on $l$ such that: 
$$(a_1. a_2)_\ast=d(\ast,z_1)=d(\ast,z_2)$$

Without loss, assume that $z_3$ is between $w_3$ and $a_2$.  
Apply Lemma \ref{separate}, to $\triangle (r_1,r_2, l)$ with $w_3$ playing the roll of $b$. Then $d(w_3, r_2)$ (the distance from $w_3$ to the ray $r_2$) is $\geq d(w_3,z_3)-2\delta$.  
So $d(w_3,z_3)-2\delta \leq d(w_3,r_2)\leq d(w_3,w_2)+d(w_2,u_2)\leq 8\delta+4$. Then: 
$$d(w_3,z_3)\leq 10\delta+4$$

Now: 
$$d(z_2, u_2)\leq d(z_2,z_3)+d(z_3,w_3) +d(w_3,w_2)+d(w_2,u_2)\leq 19\delta +8$$

Combining we have:
$$d(\ast,q)+d(q, w_2)+d(w_2,u_2)+d(u_2,z_2)\geq d(\ast,z_2)=(a_1. a_2)_\ast$$ 
$$d(\ast,q)+(a_1. a_2)_q+(7\delta+4)+(19\delta+8)\geq (a_1. a_2)_\ast$$ 
$$ (a_1. a_2)_\ast\leq d(\ast,q)+(a_1. a_2)_q+(26\delta+12)$$
This completes one side of our inequality.
$$d(\ast,q)+(a_1. a_2)_q=d(\ast,q)+d(q, w_2)\leq $$
$$(d(\ast, p_2)+ d(p_2,q))+(d(q,p_2) +d(p_2,z_2)+d(z_2,z_3)+d(z_3,w_3)+d(w_3,w_2))$$
$$=[d(\ast, p_2)+d(p_2,z_2)]+2(6\delta+4)+\delta+(10\delta+4)+\delta=(a_1. a_2)_\ast +24\delta+8$$
Finally:
$$d(\ast,q)+(a_1. a_2)_q-(24\delta+8)\leq (a_1. a_2)_\ast$$
\end{proof}

If $S\subset \partial X$ let $D_S$ be the diameter of $S$ with respect to $d_V$. If $S\subset X$, let $D_S$ be the diameter of the limit set of $S$ in $\partial X$. The constants $k_1$ and $k_2$ are defined in Definition \ref{pvmetric}. Let $X_A$ be the cusped space for $A$ inside of $X$ and $Z(W)$ the limit set of $W\subset X$. 

\begin{lemma}\label{compare} 
Suppose (the edge) $qC$ separates (the vertex) $qA$ from a vertex of (the edge) $C$ in $\mathcal T$ the Bass-Serre tree for $A\ast_CB$  and $q$ is a closest point of $qC$ to $\ast$ (in $X$). Then for $S$ a subset of $Z(X_A)$  (or $S\subset X_A$):
$$D_{qS}\leq {k_2\over k_1}e^{(26\delta+12)-d(\ast, q)}D_S$$ 
(Similarly for $B$.) 
\end{lemma}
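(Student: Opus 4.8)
The plan is to reduce the statement about diameters in $\partial X$ to the comparison of inner products provided by Lemma \ref{approx}. Recall that by Remark \ref{equivar} the visual metric $d_V$ can be thought of as $G$-equivariant: for $g \in G$ and $x,y \in \partial X$ we have $(x.y)_\ast = (gx.gy)_{g\ast}$, and more generally diameters of subsets of $\partial X$ are preserved if we move the basepoint along with the set. The key point, however, is that the left translate $qS$ must be measured with the \emph{fixed} basepoint $\ast$, and translating the basepoint from $q\ast$ to $\ast$ is exactly where the factor $e^{-d(\ast,q)}$ enters.

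First I would set up notation: let $a_1, a_2 \in Z(X_A) \subset \partial X$ be arbitrary distinct points (the case $a_1 = a_2$ is trivial), so that $qa_1, qa_2$ are the corresponding points of $Z(qX_A) = Z(qA)$ (using that $q \in qC \subset qA$ stabilizes nothing new but left-translates $X_A$ to the relevant copy). Since $qC$ separates $qA$ from a vertex of the edge $C$ and $q$ is a closest point of $qC$ to $\ast$, the hypotheses of Lemma \ref{approx} are satisfied for the points $qa_1, qa_2 \in Z(qA)$. Applying the lemma with the roles played by $q$ and by the points $qa_1, qa_2$, together with the equivariance identity $(qa_1 . qa_2)_q = (a_1 . a_2)_\ast$ (this is Remark \ref{equivar} applied to the isometry $q$ sending $\ast$ to $q$), I get
$$e^{-(qa_1 . qa_2)_\ast} \leq e^{-d(\ast,q)} e^{-(a_1 . a_2)_\ast} e^{26\delta+12}.$$

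Now I would feed this into the visual metric inequalities $k_1 e^{-(x.y)_\ast} \leq d_V(x,y) \leq k_2 e^{-(x.y)_\ast}$. Upper-bounding $d_V(qa_1, qa_2)$ by $k_2 e^{-(qa_1.qa_2)_\ast}$, then using the displayed inequality, then lower-bounding $e^{-(a_1.a_2)_\ast}$ by $d_V(a_1,a_2)/k_2$... wait, that goes the wrong way; instead I bound $e^{-(a_1.a_2)_\ast} \leq d_V(a_1,a_2)/k_1$, giving
$$d_V(qa_1, qa_2) \leq k_2 e^{-d(\ast,q)} e^{26\delta+12} \cdot \frac{d_V(a_1,a_2)}{k_1} = \frac{k_2}{k_1} e^{(26\delta+12) - d(\ast,q)} d_V(a_1,a_2).$$
Taking the supremum over all pairs $a_1, a_2 \in S$ (respectively over pairs in the limit set of $S$, when $S \subset X_A$) yields $D_{qS} \leq \frac{k_2}{k_1} e^{(26\delta+12) - d(\ast,q)} D_S$, which is the claim. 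The statement for $B$ follows verbatim, replacing $X_A$ by $X_B$ and using that the analogous separation hypothesis can be arranged.

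The only genuine subtlety — and the step I would be most careful about — is the bookkeeping for the equivariance step: one must be sure that the closest point of $qC$ to $\ast$ is genuinely $q$ (not merely some nearby vertex), so that Lemma \ref{approx} applies on the nose, and that the separation hypothesis ``$qC$ separates $qA$ from a vertex of $C$'' transports correctly under the translation. Everything else is a routine two-line manipulation of the visual metric inequalities, so I expect no real obstacle beyond keeping the direction of each inequality straight.
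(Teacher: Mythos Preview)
Your proof is correct and follows essentially the same route as the paper: apply Lemma \ref{approx} to a pair of points in $Z(qA)$, use the equivariance $(qa_1.qa_2)_q=(a_1.a_2)_\ast$, and sandwich with the visual metric constants $k_1,k_2$. The only cosmetic difference is that the paper names the points $a_1,a_2\in qS$ and translates back by $q^{-1}$, whereas you name them in $S$ and translate forward by $q$; the resulting chain of inequalities is identical. Your worry in the final paragraph is unfounded, since the hypothesis of the lemma already stipulates that $q$ is a closest point of $qC$ to $\ast$, so Lemma \ref{approx} applies directly.
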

\begin{proof}
Let $a_1,a_2\in qS\subset \partial X$. By Lemma \ref{approx}: 
$$d_V(a_1,a_2)\leq k_2e^{-(a_1. a_2)_\ast}\leq k_2e^{-d(\ast, q)-(a_1. a_2)_q +(26\delta+12)}$$
The set  $S$ contains $q^{-1}a_1$ and $q^{-1}a_2$.  
$$e^{-(a_1.a_2)_q}=e^{(q^{-1}a_1.q^{-1}a_2)_\ast}\leq {1\over k_1} d_V(q^{-1}a_1, q^{-1}a_2)\leq {1\over k_1}D_S$$  
Combining inequalities,   
$$d_V(a_1,a_2)\leq {k_2\over k_1} e^{(26\delta+12)-d(\ast,q)} D_S$$ 
Similarly if $S$ is a subset of $X_A$, $Z(X_B)$ or $X_B$.
\end{proof}

\begin{theorem} \label{conv} 
Each series involved in the definition of $d_L$ on $\partial X$ is convergent and so $d_L$ is a metric. 
\end{theorem}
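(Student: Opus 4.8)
The plan is to dominate each summand $d_V(c_i,c_{i+1})$ by the $d_V$--diameter $D_{v_iH_i}$ of the vertex--coset limit set $Z(v_iH_i)$ that contains both points, and then to use Lemma~\ref{compare} to show that these diameters decay geometrically as $i\to\pm\infty$; the three series of Definition~\ref{pvmetric} are then bounded by convergent geometric series.

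Fix $x,y\in\partial X$ and write $C_{x,y}=\{\ldots,c_{-1},c_0,c_1,\ldots\}$ as in Definition~\ref{pvmetric}, so that $\{c_i,c_{i+1}\}\subset Z(v_iH_i)$ for a vertex coset $v_iH_i$ with $H_i\in\{A,B\}$ and hence $d_V(c_i,c_{i+1})\le D_{v_iH_i}$. For each $i$ let $E_i$ be the edge coset of $\mathcal T$ with $Z(E_i)=c_i$; this is exactly the edge joining the vertices $v_{i-1}H_{i-1}$ and $v_iH_i$. Consider the forward tail $i\ge0$. Because the edges $E_0,E_1,\ldots$ recede toward the $y$--end of $\mathcal T$, there is an index $j_0$ (depending only on $x,y$) such that for all $i\ge j_0$ the edge $E_i$ separates $v_iH_i$ from a vertex of the base edge $C$. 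Let $q_i$ be a closest point of $E_i$ to $\ast$. Then $q_i\in E_i\subset v_iH_i$, so $v_iH_i=q_iH_i$ and $Z(v_iH_i)=q_i\,Z(X_{H_i})$, and applying Lemma~\ref{compare} with $S=Z(X_{H_i})$ (whose $d_V$--diameter is at most $D_{\partial X}$, which is finite since $\partial X$ is compact) gives
$$d_V(c_i,c_{i+1})\ \le\ D_{v_iH_i}\ =\ D_{q_iZ(X_{H_i})}\ \le\ C_0\,e^{-d(\ast,q_i)},\qquad C_0:=\tfrac{k_2}{k_1}e^{26\delta+12}D_{\partial X},$$
with $C_0$ independent of $i$ and of $x,y$.

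The crux is to show $d(\ast,q_i)\ge i-j_0-1$ for all $i\ge j_0$; enlarging $j_0$ if necessary we may assume in addition that $\ast$ lies in the component of $X\setminus E_j$ whose closure in $X\cup\partial X$ contains $x$ for every $j\ge j_0$. Fix $n>j_0$. For each $j$ with $j_0\le j\le n-1$ the point $q_n\in E_n$ lies in the ``$y$--side'' of $E_j$ while $\ast$ lies in the ``$x$--side'', so $E_j$ separates $\ast$ from $q_n$ in $X$; since $E_j$ is a full subgraph, no edge of $X$ joins two different components of $X\setminus E_j$, so any geodesic edge path $\gamma$ from $\ast$ to $q_n$ must pass through a vertex of $E_j$. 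The cosets $E_{j_0},\ldots,E_{n-1}$ are pairwise disjoint, so $\gamma$ contains at least $n-j_0$ distinct vertices and therefore $d(\ast,q_n)+1=|\gamma|+1\ge n-j_0$. Feeding this back into the previous paragraph, $d_V(c_n,c_{n+1})\le C_0\,e^{-(n-j_0-1)}$ for all $n\ge j_0$, so $\sum_{i\ge0}d_V(c_i,c_{i+1})$ is a finite sum plus a convergent geometric tail. The backward tail $\sum_{i\le0}d_V(c_i,c_{i-1})$ is handled in exactly the same way with the roles of the two ends exchanged. Hence the series in all three cases of Definition~\ref{pvmetric} converge, and $d_L(x,y)<\infty$ for all $x,y\in\partial X$.

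Finally $d_L$ is a metric: it is symmetric and non-negative, and $d_L(x,y)=0$ forces $C_{x,y}=\varnothing$ (no $c_i$ equals $x$ or $y$, so a nonempty $C_{x,y}$ contributes a strictly positive term) together with $d_V(x,y)=0$, i.e.\ $x=y$. For the triangle inequality, let $m$ be the median of $x,y,z$ in the Bass--Serre tree $\mathcal T$; then $C_{x,z}$ is the concatenation of the portion of $C_{x,y}$ lying between $x$ and $m$ with the portion of $C_{y,z}$ lying between $m$ and $z$. The single ``bridging'' term of $d_L(x,z)$ across the vertex coset (or cut point) at $m$ is at most the sum of one term coming from $d_L(x,y)$ and one coming from $d_L(y,z)$, by the triangle inequality for $d_V$ inside that limit set, and the remaining terms of $d_L(x,y)+d_L(y,z)$ are non-negative; hence $d_L(x,z)\le d_L(x,y)+d_L(y,z)$. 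The same argument applies verbatim to an HNN extension $A\ast_C$ and to the general graph of groups $\mathcal A$, with $E_i$ now ranging over cosets of the various edge groups. The one real obstacle is the linear lower bound $d(\ast,q_i)\gtrsim i$ for the distances from $\ast$ to the relevant edge cosets; once it is available, Lemma~\ref{compare} turns it into exponential decay of the summands and the rest is routine bookkeeping.
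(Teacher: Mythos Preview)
Your argument is correct and follows essentially the same route as the paper's proof. Both bound $d_V(c_i,c_{i+1})$ by a constant times $e^{-d(\ast,q_i)}$ via Lemma~\ref{compare}, then obtain linear growth of $d(\ast,q_i)$ from the fact that eventually each edge coset $q_iC$ separates $\ast$ from $q_{i+1}C$ (the paper records this as the strict inequality $(\dagger)$, you make the separation-and-counting explicit), and conclude by comparison with a geometric series; your treatment of the metric axioms is more detailed than the paper's one-line remark preceding Definition~\ref{pvmetric}, but the content is the same.
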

\begin{proof}
It suffices to consider case 2. Say $C_{x,y}=\{c_0,c_1,\ldots\}$. Choose $q_i\in G$ such that the limit set of $q_iC$ is $c_i$ and $q_i$ is a closest point of $q_iC$ to $\ast$. Since $r$ eventually $\delta$-fellow travels with an end of $l$, there is $n\geq 0$ such that 
for $i\geq n$, $q_iC$ separates $\ast$ and $q_nC,\ldots, q_{i-1}C$ from $q_{i+1}C, q_{i+2}C, \ldots$ and: 
$$(\dagger) \ \ \ \ \ \ \ \ \ \ \ \ d(q_i,\ast)<d(q_{i+1},\ast)$$ 
It is enough to show $\sum_{i=n}^\infty d_V(c_i,c_{i+1})$ converges. 
For $i\geq n$, let $q_iE_i$ for some $E_i\in\{A,B\}$ be the coset containing $q_iC$ and $q_{i+1}(C)$. Let $D$ be the maximum of $\{D_A,D_B\}$. By Lemma \ref{compare}: 
$$d_V(c_i,c_{i+1})\leq {k_2\over k_1}e^{(26\delta+12)-d(\ast, q_i)}D$$
$$\sum_{i=n}^\infty d_V(c_i,c_{i+1}) \leq {k_2\over k_1}e^{(26\delta+12)}D\sum_{i=n}^\infty e^{-d(\ast, q_i)} $$
By $(\dagger)$ this last series is convergent. 
\end{proof}

Before leaving this section, we need one more result that will imply $d_L$ is a linearly connected metric on $\partial X$, (once we establish that $d_L$ generates the same topology on $\partial X$ as does $d_V$). By Theorem \ref{sub} the limit set of a cusped space for $gA$ or $gB$ (in $X$) is linearly connected with respect to $d_V$ or $d_L$ for any $g\in G$. Let $q_A$ be the linear connectivity constant for the limit set $Z(X_A)$ of the cusped space $X_A$ for $A$ (in $X$). Recall, if $S\subset X$, then $D_S$ is the diameter of the limit set of $S$ in $\partial X$ with respect to $d_V$. 

\begin{lemma}\label{LinCon} 
Define $K_{\ref{LinCon}}={k_2\over k_1}e^{26\delta+12}$. Let $x_1,x_2\in Z(X_A)$ (the limit set for the cusped space $X_A\subset X$ for $A$) and let $C(x_1,x_2)$ be a connected subset of $Z(X_A)$ containing $x_1$ and $x_2$ such that $D_{C(x_1,x_2)}\leq q_Ad_L(x_1,x_2)$. If $g$ is a closest point of $gA$ to $\ast$, then $D_{gC(x_1,x_2)}\leq (K_{\ref{LinCon}})^2q_Ad(gx_1,gx_2)$. Similarly for $B$ and $X_B$. 
\end{lemma}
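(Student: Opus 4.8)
The plan is to reduce the whole statement to a single geometric fact about the cusped space $X$: if $g$ is a closest point of the coset $gA$ to $\ast$ and $gA\ne A$, then $g$ is automatically a closest point (to $\ast$) of the unique edge $gC$ of $\mathcal T$ that separates the vertex $gA$ from a vertex of the base edge $C$. Granting that, the lemma drops out by applying Lemma \ref{compare} once (with $q=g$ and $S=C(x_1,x_2)$) and the right‑hand estimate of Lemma \ref{approx} once.

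First I would dispose of the case $gA=A$: here the unique closest point of $A$ to $\ast=1\in A$ is $\ast$ itself, so $g=\ast$, $gx_i=x_i$, and the conclusion is immediate from the hypothesis $D_{C(x_1,x_2)}\le q_Ad_L(x_1,x_2)$ together with $K_{\ref{LinCon}}\ge 1$ (which holds because $k_1\le k_2$). So assume $gA\ne A$. Then $gA$ is not incident to the base edge $C$, so there is a unique edge $e_g=hC$ (with $hA=gA$) of $\mathcal T$ on the geodesic from $gA$ to $A$; it separates $gA$ from both endpoints of $C$, and as a subset of $X$ the depth‑$0$ coset $hC$ lies in $hA=gA$. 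Fix a geodesic $\alpha$ in $X$ from $\ast$ to $g$. Since $\ast$ lies strictly on the $A$–side of $hC$ while $g\in gA$ does not, and since the horoball $\mathcal H(hC)$ is glued to the rest of $X$ only along its depth‑$0$ boundary $hC$, the geodesic $\alpha$ must contain a vertex of $hC$; let $w$ be the first such vertex. Then
$$d(\ast,w)\ \ge\ d(\ast,hC)\ \ge\ d(\ast,gA)\ =\ d(\ast,g)\ \ge\ d(\ast,w),$$
the first inequality because $w\in hC$, the second because $hC\subseteq gA$, the equality because $g$ is a closest point of $gA$, and the last because $w$ is on $\alpha$. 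Hence all four quantities coincide; in particular $w=g$, so $g\in hC$, the coset $gC$ equals the separating edge $e_g$, and $g$ is a closest point of $gC$ to $\ast$.

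With this in hand I would apply Lemma \ref{compare} with $q=g$ and $S=C(x_1,x_2)\subseteq Z(X_A)$ to obtain
$$D_{g\cdot C(x_1,x_2)}\ \le\ \frac{k_2}{k_1}e^{(26\delta+12)-d(\ast,g)}D_{C(x_1,x_2)}\ =\ K_{\ref{LinCon}}\,e^{-d(\ast,g)}D_{C(x_1,x_2)}\ \le\ K_{\ref{LinCon}}\,e^{-d(\ast,g)}q_A\,d_V(x_1,x_2),$$
using the hypothesis and the fact that $d_L=d_V$ on $Z(X_A)$ (Definition \ref{pvmetric}). It then remains to check $e^{-d(\ast,g)}d_V(x_1,x_2)\le K_{\ref{LinCon}}d_V(gx_1,gx_2)$. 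By the right‑hand inequality of Lemma \ref{approx} (applied with its element equal to our $g$ and $a_i=gx_i\in Z(gA)$) together with the $G$–equivariance $(gx_1.gx_2)_g=(x_1.x_2)_\ast$ of Remark \ref{equivar},
$$(gx_1.gx_2)_\ast\ \le\ d(\ast,g)+(x_1.x_2)_\ast+(26\delta+12),$$
so $d_V(gx_1,gx_2)\ge k_1e^{-(gx_1.gx_2)_\ast}\ge \frac{k_1}{k_2}e^{-(26\delta+12)}e^{-d(\ast,g)}d_V(x_1,x_2)$, which rearranges to the required inequality. Multiplying the two displayed bounds gives $D_{g\cdot C(x_1,x_2)}\le (K_{\ref{LinCon}})^2 q_A\, d_V(gx_1,gx_2)$, and the statement for $B$ and $X_B$ is proved verbatim.

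The step I expect to be the main obstacle is the geometric observation of the second paragraph — that the closest point of $gA$ to $\ast$ actually lands on the separating edge $gC$. This is the only place where the tree‑of‑spaces/horoball structure of $X$ is used in an essential way; I would justify that $\alpha$ must meet $hC$ from the way $X$ is assembled ($\mathcal H(hC)$ is attached only along its depth‑$0$ boundary $hC$) together with the coset $hC$ separating the $A$–side from the $gA$–side of $X$, in the spirit of the analysis of how cosets $vC$ separate $X$ carried out earlier in this section. Everything after that is bookkeeping with the visual‑metric constants $k_1,k_2$ and the estimates already established in Lemmas \ref{approx} and \ref{compare}.
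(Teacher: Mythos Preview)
Your proof is correct and follows essentially the same route as the paper: apply Lemma \ref{compare} once to bound $D_{gC(x_1,x_2)}$ by $K_{\ref{LinCon}}e^{-d(\ast,g)}q_Ad_V(x_1,x_2)$, then use Lemma \ref{approx} and $G$--equivariance of the inner product to convert $e^{-d(\ast,g)}d_V(x_1,x_2)$ into $K_{\ref{LinCon}}d_V(gx_1,gx_2)$. You are in fact more careful than the paper in one respect: the hypotheses of Lemmas \ref{compare} and \ref{approx} ask for $q$ to be a closest point of the coset $qC$ (with $qC$ the edge of $\mathcal T$ separating $qA$ from $C$), while the present lemma only assumes $g$ is a closest point of $gA$; the paper's proof silently identifies these, whereas you supply the short geometric argument (any geodesic from $\ast$ to $g\in gA$ must cross the separating coset $hC\subset gA$, forcing $g\in hC$ and $gC=hC$) that bridges the gap. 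One minor remark: the sentence about the horoball $\mathcal H(hC)$ being glued only along its depth-$0$ boundary is beside the point; what you actually need, and what the chain of inequalities uses, is simply that the coset $hC$ separates $X$ in the Bass--Serre sense.
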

\begin{proof}
The metrics $d_L$ and $d_V$ agree on all subspaces in the proof of this lemma.
By Lemma \ref{compare}:
$$D_{gC(x_1,x_2)}\leq Ke^{-d(\ast,g)}D_{C(x_1,x_2)}\leq Ke^{-d(\ast,g)}q_Ad_L(x_1,x_2)$$
Since $d_L$ and $d_V$ agree on the limit set $Z(X_A)$, $d_L(x_1,x_2)\leq k_2e^{-(x_1.x_2)_\ast}$ and:
$$D_{gC(x_1,x_2)}\leq Ke^{-d(\ast, g)}q_A k_2e^{-(x_1.x_2)_\ast}=Ke^{-d(\ast, g)}q_A k_2e^{-(gx_1.gx_2)_g}$$
By Lemma \ref{approx} $e^{-(gx_1.gx_2)_g}\leq e^{26\delta +12}e^{d(\ast, g)}e^{-(gx_1.gx_2)_\ast}$, and since $d_V$ is visual, $e^{-(gx_1.gx_2)_\ast}\leq {1\over k_1}d_L(x_1,x_2)$. Combining, $k_2e^{-(gx_1.gx_2)_g}\leq Ke^{d(\ast,g)}d_L(x_1,x_2)$, so:
$$D_{gC(x_1,x_2)}\leq K^2q_Ad_L(x_1,x_2)$$
\end{proof}

\section{Equivalence of the Two Metrics}\label{dV=dL}




\begin{theorem}\label{cont} 
The metrics $d_V$ and $d_L$ define the same topology on  $\partial X$.
\end{theorem}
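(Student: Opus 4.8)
The plan is to prove that the identity map is a homeomorphism between $(\partial X,d_V)$ and $(\partial X,d_L)$, using that the former is compact. For the easy half of ``same topology'', note that for any $x,y\in\partial X$ the triangle inequality for $d_V$ applied to the chain $C_{x,y}$ (together with the terminal terms $d_V(x,c_0)$, $d_V(c_n,y)$ when $x,y$ are not ideal) gives $d_V(x,y)\le d_L(x,y)$; bi-infinite sums only increase the right side. Hence $\mathrm{id}\colon(\partial X,d_L)\to(\partial X,d_V)$ is $1$-Lipschitz and continuous, so every $d_V$-open set is $d_L$-open. Since $X$ is a proper geodesic hyperbolic space, $(\partial X,d_V)$ is compact, and $(\partial X,d_L)$ is Hausdorff (it is a metric space by Theorem~\ref{conv}); so it remains only to show that $\mathrm{id}\colon(\partial X,d_V)\to(\partial X,d_L)$ is continuous, since a continuous bijection from a compact space to a Hausdorff space is a homeomorphism. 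I would in fact prove uniform continuity, by producing a constant $N$ with $d_V(x,y)<(\varepsilon/N)^4\Rightarrow d_L(x,y)<\varepsilon$.

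For the estimate, fix $x\ne y$ and set $m=(x.y)_\ast$; since $d_V(x,y)\ge k_1e^{-m}$, a small value of $d_V(x,y)$ forces $m$ large. Choose a geodesic line $l$ from $x$ to $y$ and an internal point $z=l(0)$ of the ideal triangle on $x,y$ with third side $l$, as in Lemma~\ref{Yes}; then $d(\ast,l(t))\ge m+|t|-\delta$ for all $t$. Write $C_{x,y}=\{\ldots,c_{-1},c_0,c_1,\ldots\}$ as in Definition~\ref{pvmetric}. By Lemma~\ref{cut1} these are exactly the singleton limit sets of the cosets $v_iC$ separating the ends of $l$; since $l$ crosses each $v_iC$ it contains a vertex of $v_iC$, say at parameter $t_i$, with the $t_i$ strictly increasing in $i$ and, because distinct cosets of $C$ occupy disjoint vertex sets of $X$, with $t_{i+1}\ge t_i+1$. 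Combining this with the quasi-convexity of horoballs and the closest-point estimates (Lemmas~\ref{close},~\ref{QC},~\ref{Deep}) and with Lemma~\ref{approx}, one shows that the closest point $q_i$ of $v_iC$ to $\ast$ recedes from $\ast$ at a controlled rate as $|i|$ grows: after re-centering the indexing at the (boundedly many) separating cosets nearest the closest-approach point $z$ of $l$, one obtains a bound of the shape $d(\ast,q_i)\ \ge\ \lambda m+\lambda|i|-C_1$ on each side, for a fixed $\lambda>0$ and a universal constant $C_1$ depending only on $X$.

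For the summation, each consecutive pair $c_i,c_{i+1}$ lies in the limit set of a single vertex coset $q_iE_i$, $E_i\in\{A,B\}$, so Lemma~\ref{compare} gives $d_V(c_i,c_{i+1})\le K_{\ref{LinCon}}\,D\,e^{-d(\ast,q_i)}$ with $D=\max\{D_A,D_B\}$, and the terminal terms $d_V(x,c_0)$, $d_V(c_n,y)$ are bounded the same way by diameters of vertex-group limit sets. Summing the two geometric tails and the boundedly many central terms yields
\[
d_L(x,y)\ \le\ K_{\ref{LinCon}}\,D\,e^{C_1}\,\frac{C\,e^{-\lambda m}}{1-e^{-\lambda}}\ \le\ N_0\,d_V(x,y)^{\lambda}
\]
for a constant $N_0$ depending only on $X$ and the chosen visual metric; keeping careful track of the constants in Lemmas~\ref{approx} and~\ref{compare} and of how many separating cosets can be roughly equidistant from $\ast$ pins the exponent down to $1/4$, so with $N$ chosen appropriately $d_V(x,y)<(\varepsilon/N)^4$ forces $d_L(x,y)<\varepsilon$. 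This gives uniform continuity, and with the compactness reduction the two metrics generate the same topology on $\partial X$.

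The step I expect to be the main obstacle is the quantitative geometric lemma behind the bound $d(\ast,q_i)\gtrsim m+|i|$. In contrast to Theorem~\ref{conv}, which only asserts convergence of each series, here one needs a rate that is uniform over all pairs $x,y$. The difficulty is that for a fixed geodesic line $l$ one cannot arrange that $l$ pass near $q_i$ for every $i$ at once, so the crossing point $l(t_i)$ may be far from the actual closest point $q_i$ inside the coset $v_iC$; controlling this loss, and simultaneously bounding how many separating cosets can lie within a given distance of $\ast$ — which is exactly where disjointness of the cosets and the fact that the single geodesic $l$ meets all of them are used — is the heart of the argument. The remaining ingredients (the $1$-Lipschitz direction, the compactness reduction, and the geometric-series bookkeeping) are routine.
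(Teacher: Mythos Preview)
Your overall strategy---the compactness reduction, the $1$-Lipschitz direction $d_V\le d_L$, and bounding $d_L$ by a power of $d_V$ via a geometric series over the separating cosets---matches the paper's. The gap is in the treatment of the ``central'' coset, and it is not a technical wrinkle: the bound you propose is actually false there.

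Concretely, the inequality $d(\ast,q_i)\ge\lambda(x.y)_\ast+\lambda|i|-C_1$ fails for the separating coset $q_0C$ that the rays $r_1,r_2$ enter first. Take $q_0C=C$ (so $\ast\in q_0C$ and $d(\ast,q_0)=0$) and choose $x,y$ on opposite sides of the cut point $c_0=Z(C)$, both very close to $c_0$ in $\partial X$; then $(x.y)_\ast$ is arbitrarily large while $d(\ast,q_0)=0$. In this regime your estimate $d_V(c_0,c_1)\le K_{\ref{LinCon}}\,D\,e^{-d(\ast,q_0)}$ from Lemma~\ref{compare} degenerates to the constant $K_{\ref{LinCon}}D$ and contributes nothing toward making $d_L(x,y)$ small. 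The tail estimate and the ``boundedly many'' count are both fine---indeed the paper shows $s_{\pm1}>(x.y)_\ast-14\delta$ when $s_0$ is small, so there is at most \emph{one} bad central coset---but the two central jumps $d_V(c_{-1},c_0)$ and $d_V(c_0,c_1)$ require a different mechanism, not just better bookkeeping.

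The paper supplies that mechanism, and it is the idea your outline is missing: when $q_0$ is close to $\ast$, the rays $r_1,r_2$ must penetrate \emph{deep} into the horoball $\mathbf H_0$ over $q_0C$ (to depth comparable to $(x.y)_\ast/2$) before they can separate, so a geodesic from $q_0$ to $c_1$ (via the nearest point $q_1'$ of $q_1C$ to $q_0$) begins with a long vertical segment inside $\mathbf H_0$ (Lemma~\ref{Deep}). This forces $(c_0.c_1)_{q_0}\gtrsim(x.y)_\ast/4$, and Lemma~\ref{approx} then converts this into the required bound on $d_V(c_0,c_1)$. That horoball-depth argument is the true source of the exponent $1/4$; without it the proof does not close.
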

\begin{proof} 
Since $\partial X$ is compact with the metric $d_V$, it suffices to show the identity map from the $d_V$ metric to the $d_L$ metric is continuous. First a brief outline of the proof. 
We will show there is a constant $N$ such that $d_L(x_1,x_2)\leq N(d_V(x_1,x_2))^{1\over 4}$ for $x_1,x_2\in \partial X$.  This implies that for a given $\epsilon>0$ and $\delta=({\epsilon\over N})^4$, if $d_V(x_1,x_2)<\delta$ then $d_L(x_1,x_2)<\epsilon$.  For certain cut points $c_{-1},c_0$ and $c_1$ of $\partial X$ (each of which separate $x_1$ from $x_2$) and in several different situations, we produce functions of $d_V(x_1,x_2)$  that bound  $d(x_2,c_1)$ (Lemma \ref{C2.1}), $d_L(c_1,c_0)$ (Lemmas \ref{C1} and Lemma \ref{C2}), $d_L(c_{0},c_{-1})$ (Lemma \ref{C3}), and $d_L(c_{-1}, x_1)$ (Lemma \ref{C2.1}). Combining these results with the triangle inequality produces the desired inequality. 

Suppose $x_1, x_2\in \partial X$ and $C_{x_1,x_2}=\{\ldots, c_{-1},c_0,c_1\dots\}$ is the ordered set of cut points separating $x_1$ from $x_2$ in $X$. For $i\in \{1,2\}$, let $r_i$ be a geodesic ray from $\ast\in X$ to $x_i$ and $l$ be a geodesic line in $X$ with ends converging to $x_1$ and $x_2$. Consider $\hat r_i$ and 
$\hat l$ in $\mathcal T$  as in Lemma \ref{Tree}. Then $\hat r_1=(e_1,e_2,\ldots e_n, f_{-1}, f_{-2},\ldots )$, $\hat r_2=(e_1,\ldots, e_n, f_{1},f_{2},\ldots)$ and 
$\hat l$ has the form $(\ldots, f_{-1}, f_0, f_1, \ldots)$. For $i\in\{1,2,\ldots, n\}$ let $p_i$ be a closest vertex of $p_i C=e_i$ to $\ast$ where the limit set of $p_iC$ is $\{d_i\}$. 
For  $i\in\{\ldots, -1,0,1,\ldots \}$ let $q_i$ be a closest vertex of $q_i C=f_i$ to $\ast$ where the limit set of $q_iC$ is $\{c_i\}$.

In $X$:

(1) For $i\in\{1,2,\ldots, n\}$, the coset $p_iC$ separates 

\noindent $\{C, p_1C,\ldots p_{i-1} C\}$ from $\{p_{i+1}C, \ldots , p_nC\}\cup \{\ldots, q_{-1}C, q_0C, q_1C\ldots \}$. 
{\vskip .25in}
(2) If $i<0$, the coset $q_iC$ separates 

\noindent $\{\ldots, q_{i-2}C, q_{i-1}C\}$ from  $\{q_{i+1}C, q_{i+2}C,\ldots  \} \cup \{C, p_{1}C, \ldots , p_nC\}$.
{\vskip .25in}
(3) If $i\geq 0$, the coset $q_iC$ separates 

\noindent $\{ q_{i+1}C, q_{i+2}C\ldots\}$ from  $\{q_{i-1}C, q_{i-2}C,\ldots  \}\cup \{C, p_{1}C, \ldots , p_nC\}$.
{\vskip .25in}

Note that even when $n=0$, so that  $\ast$, $q_0C$ and $q_{-1}C$ belong to the same vertex group, the following hold true. 

(4) The coset $f_0=q_0C$ either contains $\ast$ or separates $\ast$ from $\{q_1C,q_2C,\ldots \}$. In particular $r_2$ contains a point of $q_iC$ for all $i\geq 0$
{\vskip .25in}

(5) The coset $f_{-1}=q_{-1}C$ either contains $\ast$ or separates $\ast$ from $\{q_{-2}C,q_{-3}C,\ldots \}$. In particular $r_1$ contains a point of $q_{i}C$ for all $i<0$.

{\vskip .25in}
Lemma \ref{close} implies:

\begin{lemma}\label{si}
For each $i\geq 0$ the geodesic $r_2$ contains a point of $q_iC$ and if $s_i$ is the first point in $[0,\infty)$ such that $r(s_i)\in q_iC$, then $d(r_2(s_i),q_i)\leq 6\delta +4$. If $i<0$ the geodesic $r_1$ contains a point of $q_iC$ and if $s_i$ is the first point in $[0,\infty)$ such that $r_1(s_i)\in q_iC$, then $d(r_1(s_i),q_i)\leq 6\delta +4$.
\end{lemma}

Let $z_1, z_2, z_3$ be internal points of the ideal geodesic triangle with sides $r_1, r_2, l$, where $z_1$ is on $r_1$, $z_2$ on $r_2$ and $z_3$ on $l$. 
Then: 
$$(x_1. x_2)_\ast =d(\ast, z_1)=d(\ast, z_2)$$ 

 
 
Let $m={(x_1. x_2)_\ast\over 2}$ so that $r_2(m)$ is half way between $\ast$ and $z_2$ on $r_2$. 
We consider two cases. When $s_0\geq m$ and $s_0<m$. The following constant appears many times in what follows, where $D$ is the larger of $D_A$ and $D_B$ (the respective diameters of $A$ and $B$, with respect to $d_V$).
$$Q=k_2k_1^{-{3\over 2}}e^{32\delta+16} D$$

\begin{lemma} \label{C1} 
Suppose $s_0\geq m$, then:
$$(1)\ \ \ d_V(c_0,c_1)\leq Q \sqrt{d_V(x_1,x_2)}\hbox{ and }$$
$$(2)\ \ \ \ \ \ \ \  d_L(c_0,x_2)\leq 2Q\sqrt{d_V(x_1,x_2)}$$
\end{lemma} 
\begin{proof}
In this case, 
$$d(\ast, q_0)+ (6\delta-4)\geq s_0\geq m$$ 
Without loss, assume that $q_0C$ and $q_1C$ bound $q_0A$ - as opposed to $q_0B$. By Lemma \ref{compare}:
$$D_{q_0A}\leq {k_2\over k_1}e^{(26\delta+12)-d(\ast, q_0)}D_A\leq {k_2\over k_1}e^{26\delta +12-m +6\delta+4}D_A={k_2\over k_1}e^{32\delta+16}D_A e^{-{(x_1. x_2)_\ast\over 2}}$$ 
Combining the last inequality with  $e^{-(x_1.x_2)_\ast} \leq {1\over k_1}d_V(x_1,x_2)$ gives:
$$d_V(c_0,c_1)\leq D_{q_0A}\leq {k_2\over k_1}e^{32\delta+16} D_A \sqrt{d_V(x_1,x_2)\over k_1}=Q\sqrt{d_V(x_1,x_2)}$$
So Equation (1) is established.

Observing that $d(\ast, q_1)\geq d(\ast, q_0)+1$ and applying Lemma \ref{compare} implies:
$$d_V(c_1,c_2)\leq D_{q_1B}\leq {k_2\over k_1}e^{(26\delta+12)-d(\ast, q_1)}D_B\leq {k_2\over k_1}e^{-1}e^{32\delta+16}D_B e^{-{(x_1. x_2)_\ast\over 2}}$$ 
In general, we have for $i\geq 0$, $d(\ast, q_i)\geq d(\ast, q_0)+i$ and so:
$$d_V(c_i,c_{i+1})\leq {k_2\over k_1}e^{-i}e^{32\delta+16} D e^{-{(x_1. x_2)_\ast\over 2}}\leq {k_2\over k_1}e^{-i}e^{32\delta+16} D \sqrt{d_V(x_1,x_2)\over k_1} \hbox{ and}$$ 
$$d_V(c_i,c_{i+1})\leq e^{-i} Q\sqrt{d_V(x_1,x_2)}$$
Since $\sum_{i=0}^\infty e^{-i}\leq\sum_{i=0}^\infty 2^{-i}= 2$:
$$d_L(c_0,x_2)=\sum_{i=1}^\infty d_V(c_{i-1}, c_i) \leq Q\sqrt{d_V(x_1,x_2)}\sum_{i=0}^\infty e^{-i}$$
$$d_L(c_0,x_2)\leq 2Q\sqrt{d_V(x_1,x_2)}$$
\end{proof}

Now we consider Case 2.

Recall that  $r_1(2m)=z_1$, $r_2(2m)=z_2$ and $\{z_1,z_2,z_3\}$ are ideal points of the ideal geodesic triangle $\triangle (r_1,r_2,l)$. For $i\geq 0$, let $s_i$ be the first point of $[0,\infty)$ such that $r_2(s_i)\in q_iC$. If $i<0$, let $s_i$ be the first point of $[0,\infty)$ such that $r_1(s_i)\in q_iC$.  

\begin{lemma}\label{sm} 
Suppose $s_0<m$ and $m\geq 14\delta$, then:  

\medskip

\noindent $(i)$ Let ${\bf H_i}$ be the horoball for $q_iC$. If  $i\in\{-1,0,1\}$ and $s_i<2m$, then: 

$$max \{d(z_2,{\bf H_i}),d(z_1, {\bf H_i})\leq 4\delta.$$ 

\noindent $(ii)$ For $i\in \{-1,1\}$: 
$$s_{i}> 2m-14\delta=m+(m-14\delta)\geq m.$$

\end{lemma}

\begin{proof} 
First we prove ($i$). Lemma \ref{close} implies: 
$$max\{d(q_1, r_2(s_1)), d(q_0,r_2(s_0)),d(q_{-1}, r_1(s_{-1}))\}\leq 6\delta +4$$ 
For $i\in \{-1,0,1\}$, let $w_i$ be a point of $q_iC$ on the line $l$ (Lemma \ref{cut1}). Consider the ideal triangle $\triangle (r_1, r_2, l)$ (see Figure 11). 

{\it By Remark \ref{sub1}, we assume the ideal triangle $\triangle (r_1, r_2, l)$ is $\delta$ (not $5\delta$) thin.}

If $w_i$ is between $z_3$ and $x_2$, then the point $w_i'$ of $r_2$ corresponding to $w_i$ is within $\delta$ of $q_iC$.  Lemma \ref{QC} (applied to $w_0'$, $z_2$ and $r_2(s_0)$ when $i=0$, to $w_1'$, $z_2$ and $r_2(s_1)$ when $i=1$, and to $w_{-1}'$, $z_2$ and $r_2(s_{-1})$ when $i=-1$)  implies $z_2$ is within $3\delta$ of  ${\bf H_i}$ and so $z_1$ is within $4\delta$ of ${\bf H_i}$. 
If $w_i$ is between $z_3$ and $x_1$, let $w_i'$ be the corresponding point of $r_1$ (within $\delta$ of $w_i$). Lemma \ref{QC} (applied to the $r_1$ points $w_0'$, $z_1$ and $r_1(s_0)$ when $i=0$,  to $w_1'$, $z_1$ and $r_1(s_1)$ when $i=1$ and to $w_{-1}'$, $z_1$ and $r_{1}( s_{-1})$ when $i=-1$) implies $z_1$ is within $3\delta$ of  ${\bf H_i}$ and so $z_2$ is within $4\delta$ of ${\bf H_i}$. 
In any case,  $z_1$ and $z_2$ are within $4\delta$ of ${\bf H_i}$ for $i\in \{-1,0,1\}$ (so part ($i$) is proved). 


\vspace {.6in}
\vbox to 3in{\vspace {-2in} \hspace {-1.7in}
\includegraphics[scale=1]{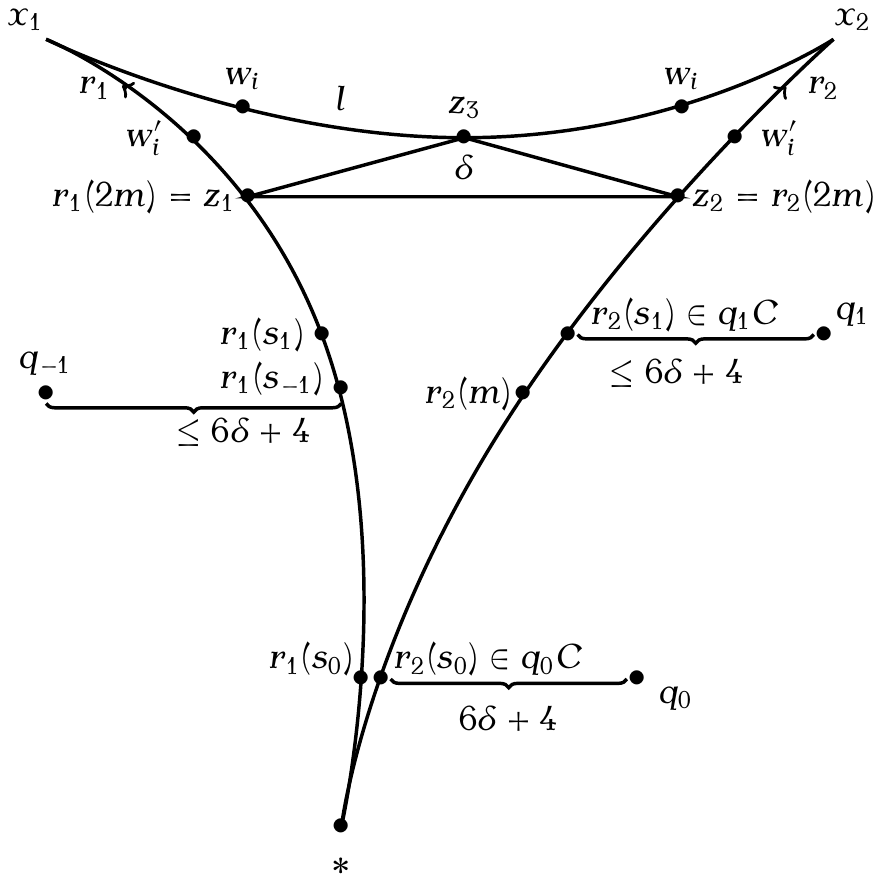}
\vss }
\vspace{.4in}

\centerline{Figure 11}

\medskip

Lemma \ref{Deep} applied to ${\bf H_0}$ and $r_2|_{[s_0,2m]}$, ${\bf H_1}$ and $r_2|_{[s_1,2m]}$, and ${\bf H_{-1}}$ and $r_1|_{[s_{-1},2m]}$ respectively (with $N=4\delta$), implies:
$$(1)\ \ \ \ \ \ r_2([s_0+7\delta, 2m-7\delta])\subset {\bf H_0^\delta}$$ 
$$(2)\ \ \ \ \ \ r_2([s_1+7\delta, 2m-7\delta])\subset {\bf H_1^\delta}$$
$$(3)\ \ \ r_1([s_{-1}+7\delta, 2m-7\delta])\subset {\bf H_{-1}^\delta}$$

Since ${\bf H_0^\delta}\cap {\bf H_1^\delta}=\emptyset$ we have: 
$$[s_0+7\delta, 2m-7\delta]\cap [s_1+7\delta, 2m-7\delta]=\emptyset$$
By hypothesis, $m\geq 14\delta$. As $s_0<m$, $m+(m-s_0)> 14\delta$. Then $2m> 14\delta+s_0$ and $(2m-7\delta)-(s_0+7\delta)> 0$ implying $[s_0+7\delta, 2m-7\delta]\ne \emptyset$. Since $[s_0+7\delta, 2m-7\delta]$ and $[s_1+7\delta, 2m-7\delta]$ have the same right end point and empty intersection, it must be that $[s_1+7\delta, 2m-7\delta]=\emptyset$, equivalently $(2m-7\delta)-(s_1+7\delta)<0$. Then $s_1>2m-14\delta$. We have verified part $(ii)$ when $i=1$:
$$(4)\ \ \ \ \ m\geq 14\delta \hbox { implies } s_1>2m-14=m+(m-14\delta)\geq m$$

Since $d({\bf H_0^\delta}, {\bf H_{-1}^\delta})>\delta$ and $d(r_1(t), r_2(t))\leq \delta$ for all  $t\in [0,2m]$ equations $(1)$ and $(3)$ imply:
$$[s_0+7\delta, 2m-7\delta]\cap [s_{-1}+7\delta, 2m-7\delta]=\emptyset$$ 
Again, since $m\geq 14\delta$,  $[s_0+7\delta, 2m-7\delta]\ne \emptyset$ and so $[s_{-1}+7\delta, 2m-7\delta]=\emptyset$. Equivalently, $s_{-1}>2m-14\delta$. 
$$(4')\ \ \ \ \ m\geq 14\delta \hbox { implies } s_{-1}>2m-14\delta\geq m$$
Equations $(4)$ and $(4')$ verify part $(ii)$ of the Lemma.
\end{proof}

\begin{lemma} \label{C2.1} 
Suppose $s_0<m$ and $m>14\delta$, then: 
$$max\{d_L(c_{-1},x_1),d_L(c_1,x_2)\}\leq 2Q\sqrt{d_V(x_1,x_2)}$$
\end{lemma}
\begin{proof}
The lemma follows immediately from Lemma \ref{sm}, and applying Lemma \ref{C1} twice - first with $s_0$ replaced by $s_1$ and $c_0$ replaced by $c_1$; and second with $s_0$ replaced by $s_{-1}$, $c_{0}$ replaced by $c_{-1}$ and $x_2$ replaced by $x_1$. 
\end{proof}

\begin{lemma} \label{C2} 
Suppose $s_0<m$ and $m\geq 54\delta+24+J_{\ref{Deep}}(40\delta+20)$ then 
there is a constant $M_1$ such that:
$$d_V(c_0,c_1)\leq M_1(d_V(x_1,x_2))^{1\over 4}$$
\end{lemma}

\begin{proof}
Our goal is to show that a geodesic from $q_0$ to $c_0$ and a geodesic from $q_0$ to $c_1$ will $\delta$-fellow travel for a ``long" distance (depending on $m$). This is equivalent to $(c_0. c_1)_{q_0}$ being ``large". A geodesic from $q_0$ to $c_0$ is the vertical geodesic at $q_0$. Let $q_1'$ be a closest point of $q_1C$ to $q_0$. A geodesic from $q_0$ to  $q_1'$  followed by a vertical geodesic is a geodesic from $q_0$ to $c_1$.  
\vskip .15in

\begin{claim}\label{cl1} 
If $m>14\delta$ and $s_0<m$, then the following inequalities hold:


$(i)\ \ \ \  d(q_1', r_2(s_1))\leq 34\delta+20$

$(ii)\ \ \  s_1-s_0>m-14\delta$  
\end{claim}
\begin{proof} By Lemma \ref{sm}(ii), $s_1>m$, so $[s_0,s_1]$ is non-empty.  By Lemma \ref{close}, a geodesic from $q_0$ to $q_1$ contains a point $q_1''\in q_1C$ such that $d(q_1', q_1'')\leq 6\delta+4$.   See Figure 12.
Since $d(q_1, r_2(s_1))\leq 6\delta +4$ and $d(q_0, r_2(s_0))\leq 6\delta+4$, Lemma \ref{parallel} implies there is $n\in [s_0,s_1]$ such that:
$$d(r_2(n), q_1'')\leq 8\delta+4$$
By the triangle inequality:
$$n-14\delta-8\leq d(q_1',\ast)\leq n+14\delta +8$$
$$s_1-6\delta-4\leq d(q_1,\ast)\leq s_1+6\delta+4$$

\vspace {.6in}
\vbox to 3in{\vspace {-2in} \hspace {-1.8in}
\includegraphics[scale=1]{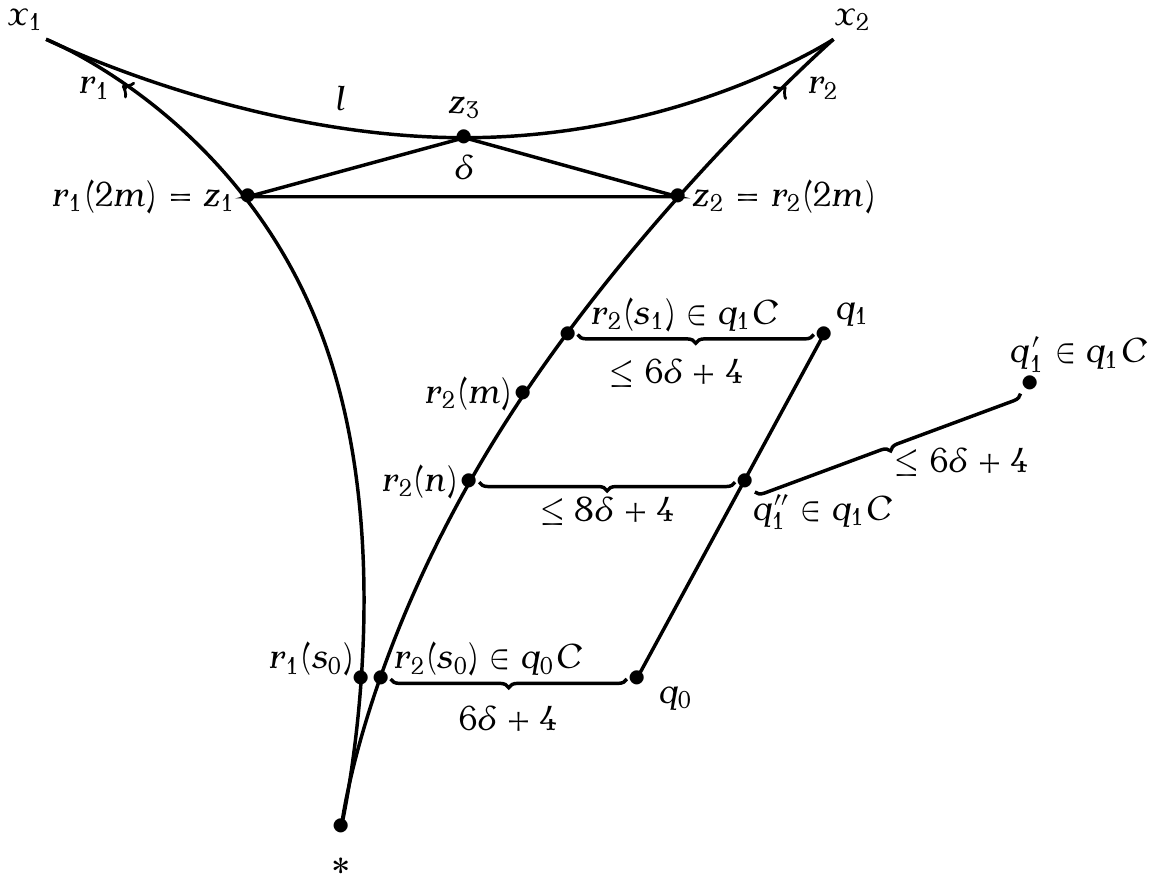}
\vss }
\vspace{.4in}

\centerline{Figure 12}

\medskip

Recall, $q_1'\in q_1C$ where $q_1$ is a closest point of $q_1C$ to $\ast$. Combining the last two inequalities:
$$0\leq d(q_1',\ast)-d(q_1,\ast)\leq n+14\delta+8-(s_1-6\delta-4)=(20\delta+12)-(s_1-n)$$
$$ 0\leq s_1-n\leq 20\delta+12$$
$$d(q_1',r_2(s_1))\leq d(q_1',q_1'')+d(q_1'', r_2(n))+ d(r_2(n), r_2(s_1))$$ 

This implies:
$$(i)\ \ \ \ \ d(q_1', r_2(s_1))\leq (6\delta+4) +(8\delta+4)+(20\delta+12)=34\delta+20$$
Lemma \ref{sm}$(ii)$ implies $s_1>2m-14\delta$. Since $m>s_0$:
$$(ii)\ \ \ \ \ s_1-s_0>s_1-m>m-14\delta$$
This completes the proof of the Claim. 
\end{proof}

At this point we consider two cases. The first case is when $s_1\geq 2m$ the second (more complicated case) is when $s_1<2m$. 
\vskip .15in
\noindent {\bf Case 1.} Assume that $s_1\geq 2m$.
\vskip.15in

Let $\alpha$ be a geodesic from $q_0$ to $q_1'$. We want to show there is a ``large" integer $k$ such $\alpha(k)$ is ``close" to ${\bf H_0}$ (the horoball over $q_0C$) and then use Lemma \ref{Deep} to show an initial segment of $\alpha$ can be replaced by a geodesic with a ``long" vertical initial segment in ${\bf H_0}$. This allows us to show that $(c_0.c_1)_{q_0}$ is ``large" when $m$ is ``large".

By Claim \ref{cl1}$(i)$, $d(q_1', r_2(s_1))\leq 34\delta+20$ and by Lemma \ref{parallel} there is an integer $k$ such that $d(\alpha(k),z_2)\leq 36\delta+20$ (see Figure 13). 

\vspace {.4in}
\vbox to 3in{\vspace {-2in} \hspace {-1.8in}
\includegraphics[scale=1]{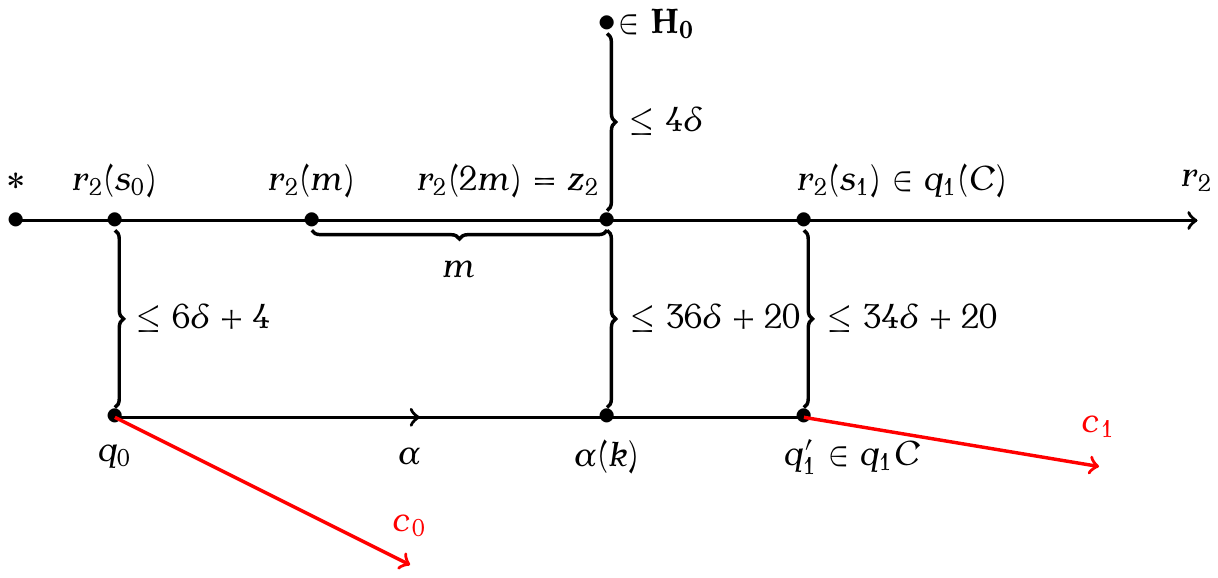}
\vss }
\vspace{-1in}

\centerline{Figure 13}

\medskip

Since $s_0<m$, Lemma \ref{sm}$(i)$ implies $d(z_2,{\bf H_0})\leq  4\delta$. By the triangle inequality, we have $\alpha(k)$ ``close" to ${\bf H_0}$ :
$$d(\alpha(k),{\bf H_0})\leq d(\alpha(k), z_2)+d(z_2,{\bf H_0})\leq 40\delta+20$$ 

Since $s_0<m$ and $z_2=r_2(2m)$, we have $d(r_2(s_0),z_2)>m$, and we see that $k$ is ``large" when $m$ is ``large":
$$m<d(r_2(s_0),z_2)\leq d(r_2(s_0),q_0)+d(q_0, \alpha(k))+d(\alpha(k), z_2)$$
$$k=d(q_0,\alpha(k))>m-(6\delta+4)-(36\delta +20)=m-42\delta-24\geq J_{\ref{Deep}}(40\delta+20)$$

Since $\alpha(0)\in q_0C\subset {\bf H_0}$ and $d(\alpha(k), {\bf H_0})\leq 40\delta+20$, Lemma \ref{Deep} implies that there is a geodesic $\alpha'$, from $q_0$ to $\alpha(k)$ (and hence from $q_0$ to $q_1'$) with initial vertical segment of length $$M={1\over 2}(k-43\delta-20)>{1\over 2} (m-85\delta-44)$$ 
In particular, the vertical geodesic in ${\bf H_0}$ at $q_0$ (converging to $c_0$) and the geodesic $\alpha'$ followed by the vertical geodesic at $q_1'$ in ${\bf H_1}$ have an initial overlap of length $M$. Then $(c_0. c_1)_{q_0}\geq M$.  Let 
$$L_1={1\over 2} (85\delta+44)$$

\noindent {\bf Case 2.} Assume that $s_1<2m$. 
 $$s_1-s_0=d(r_2(s_0), r_2(s_1))\leq d(r_2(s_0), q_0)+d(q_0,q_1')+d(q_1', r_2(s_1))$$
$$s_1-s_0\leq6\delta+4+d(q_0, q_1')+34\delta+20=d(q_0,q_1')+40\delta+24$$
By part Claim \ref{cl1}$(ii)$, $s_1-s_0>m-14\delta$, so:
$$d(q_0, q_1')\geq s_1-s_0-40\delta-24> m-54\delta-24\geq J_{\ref{Deep}}(40\delta+20)$$
Since $r_2(s_0)\in q_0C$ and $d(z_2,{\bf H_0})\leq 4\delta$ (see Lemma \ref{sm}$(i)$), Lemma \ref{QC} implies $r_2(s_1)$ is within $6\delta$ of $q_0(C)$. By Claim \ref{cl1}$(i)$, $d(q_1',r_2(s_1))\leq 34\delta+20$ and so $q_1'$ is within $40\delta +20$ of $q_0C$.

Since $d(q_0, q_1')\geq J_{\ref{Deep}}(40\delta+20)$ Lemma \ref{Deep} implies there is a geodesic from $q_0$ to $q_1'$ with an initial vertical segment of length: 
$$M'={1\over 2} (d(q_0,q_1')- (43\delta+20))>{1\over 2}(m-54\delta-24-(43\delta+20))={1\over 2}(m-97\delta-44)$$ 

In particular, $(c_0. c_1)_{q_0}$ is at least as large as this last number. Let 
$$L_2={1\over 2}(97\delta+44)$$ 
so that $L_2>L_1$. In either case: 
$$(c_0. c_1)_{q_0} \geq {m\over 2}- L_2={(x_1. x_2)_\ast\over 4}- L_2$$
$$e^{-(c_0. c_1)_{q_0}} \leq e^{-(x_1. x_2)_\ast\over 4} e^{L_2}\leq e^{L_2} k_1^{-{1\over 4}} (d_V(x_1,x_2))^{1\over 4}$$
By Lemma \ref{approx}:
$$d_V(c_0,c_1)\leq k_2e^{-(c_0. c_1)_\ast}\leq k_2e^{-d(\ast,q_0)} e^{-(c_0. c_1)_{q_0}} e^{26\delta+12}\leq k_2e^{-(c_0. c_1)_{q_0}} e^{26\delta+12}$$
Combining these last two inequalities and letting $M_1=k_2k_1^{-{1\over 4}}e^{L_s+26\delta+12}$:
$$(5)\ \ \ d_V(c_0,c_1)\leq  (d_V(x_1,x_2))^{1\over 4}k_2k_1^{-{1\over 4}}e^{L_2+26\delta+12}=M_1(d_V(x_1,x_2))^{1\over 4}$$
This completes the proof of Lemma \ref{C2}.
\end{proof}

\hskip .15in
\begin{lemma} \label{C3} 
Suppose $s_0<m$ and $m>57\delta+24+J_{\ref{Deep}}(40\delta+20)$, then 
there is a constant $M_{-1}$ such that:
$$d_V(c_{-1},c_0)\leq M_{-1}(d_V(x_1,x_2))^{1\over 4}$$
\end{lemma}
\begin{proof} 
The argument is completely similar to the one bounding $d_L(c_0,c_1)$ in Lemma \ref{C2}. The fact that $d(r_1(s_0), q_0)\leq 7\delta+4$ (as opposed to  $d(r_2(s_0),q_0)\leq 6\delta+4$) increases our bounds in an elementary way. 
  
Again, our goal is to show that the geodesic from $q_0$ to $c_0$ and the geodesic from $q_0$ to $c_{-1}$ will $\delta$-fellow travel for a distance depending on $m$. This is equivalent to $(c_0. c_{-1})_q$ being ``large" when $m$ is large. A geodesic from $q_0$ to $c_0$ is the vertical geodesic at $q_0$. Let $q_{-1}'$ be a closest point of $q_{-1}C$ to $q_0$. A geodesic from $q_0$ to  $q_{-1}'$  followed by a vertical geodesic is a geodesic from $q_0$ to $c_{-1}$.  

\begin{claim}\label{cl2} 
If $m>14\delta$, then the following inequalities hold:


$(i)\ \ \ \  d(q_{-1}', r_1(s_{-1}))\leq 36\delta+20$

$(ii)\ \ \  s_{-1}-s_0>m-14\delta$  
\end{claim}

\begin{proof}
By Lemma \ref{close}, a geodesic from $q_0$ to $q_{-1}$ contains a point $q_{-1}''\in q_1C$ such that $d(q_{-1}', q_{-1}'')\leq 6\delta+4$.   See Figure 14.

Since $d(q_{-1}, r_1(s_{-1}))\leq 6\delta +4$ and $d(q_0, r_1(s_0))\leq 7\delta+4$ Lemma \ref{parallel} implies there is $n'\in [s_0,s_1]$ such that:
$$d(r_1(n'), q_{-1}'')\leq 9\delta+4$$

\vspace {.6in}
\vbox to 3in{\vspace {-2in} \hspace {-1.8in}
\includegraphics[scale=1]{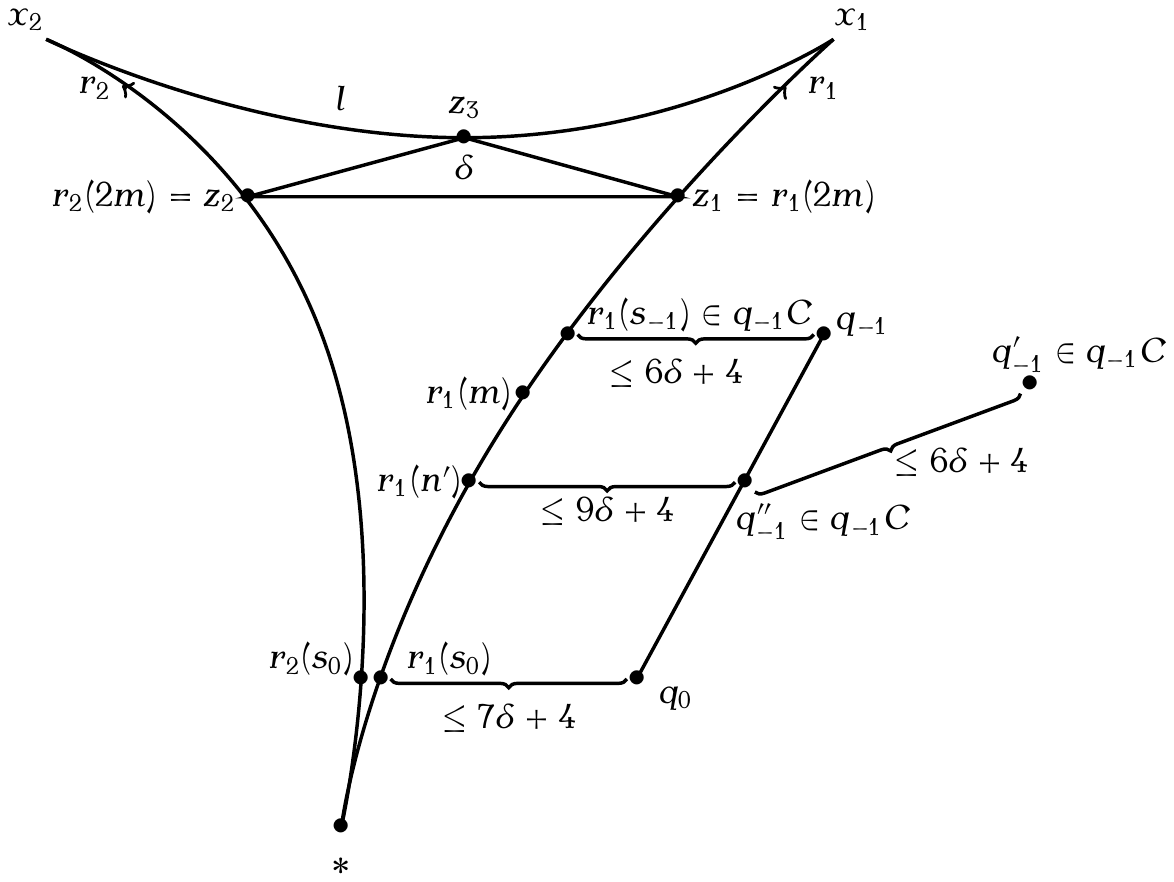}
\vss }
\vspace{.4in}

\centerline{Figure 14}

\medskip

By the triangle inequality:
$$n'-15\delta-8\leq d(q_{-1}',\ast)\leq n'+15\delta +8$$
$$s_{-1}-6\delta-4\leq d(q_{-1},\ast)\leq s_{-1}+6\delta+4$$
Combining:
$$0\leq d(q_{-1}',\ast)-d(q_{-1},\ast)\leq n'+15\delta+8-(s_{-1}-6\delta-4)=(21\delta+12)-(s_{-1}-n')$$
$$ 0\leq s_{-1}-n'\leq 21\delta+12$$
$$d(q_{-1}',r_1(s_1))\leq d(q_{-1}',q_{-1}'')+d(q_{-1}'', r_1(n'))+ d(r_1(n'), r_1(s_{-1}))$$ 
This implies:
$$(i)\ \ \ \ \ d(q_{-1}', r_1(s_{-1}))\leq (6\delta +4)+(9\delta +4)+(21\delta+12)=36\delta+20$$
Since $s_0<m$ and (by Lemma \ref{sm}$(ii)$) $s_{-1}>2m-14\delta$
$$(ii)\ \ \ \ \ s_{-1}-s_0>s_{-1}-m>m-14\delta$$
This completes the proof of the Claim.
\end{proof}

Again we consider two cases. The first case is when $s_{-1}\geq 2m$ the second  is when $s_1<2m$. 

\noindent {\bf Case A.} Assume that $s_{-1}\geq 2m$.

Let $\alpha$ be a geodesic from $q_0$ to $q_{-1}'$. By Lemma \ref{cl2}$(i)$, $d(q_{-1}', r_1(s_{-1}))\leq 36\delta+20$ and by Lemma \ref{parallel} there is an integer $k$ such that 
$$d(\alpha(k),z_1)\leq 38\delta+20$$ 
Since $s_0<m$, Lemma \ref{sm}$(i)$ implies, $d(z_1,{\bf H_0})\leq  4\delta$. By the triangle inequality:
$$d(\alpha(k),{\bf H_0})\leq d(\alpha (k), z_1)+d(z_1, {\bf H_0})\leq 42\delta+20$$ 
Since $s_0<m$ and $z_1=r_1(2m)$, we have $d(r_1(s_0),z_1)>m$ and:
$$m<d(r_1(s_0),z_1)\leq d(r_1(s_0),q_0)+d(q_0, \alpha(k))+d(\alpha(k), z_1)$$
$$k=d(q_0,\alpha(k))>m-(7\delta+4)-(38\delta +20)=m-45\delta-24\geq J_{\ref{Deep}}(42\delta+20)$$ 

Since $\alpha(0)\in q_0C\subset {\bf H_0}$ and $d(\alpha(k),{\bf H_0})\leq 42\delta+20$, Lemma \ref{Deep} implies that (since  $k\geq J_{\ref{Deep}}(42\delta+20)$) there is a geodesic $\alpha'$, from $q_0$ to $\alpha(k)$ (and hence from $q_0$ to $q_{-1}'$) with initial vertical segment in ${\bf H_0}$ of length 
$$M'={1\over 2}(k-(45\delta+20))\geq{1\over 2}(m-(45\delta+24)-(45\delta +20))={1\over 2}(m-90\delta-44)$$  
In particular the vertical geodesic at $q_0$ in ${\bf H_0}$ (converging to $c_0$) and the geodesic $\alpha'$ followed by the vertical geodesic at $q_{-1}$ in ${\bf H_{-1}}$ have initial overlap of length $M'$. Thus  $(c_0. c_{-1})_{q_0}\geq M'$.  Let 
$$L_{-1}={1\over 2} (90\delta+44)$$

\noindent {\bf Case B.} Assume that $s_{-1}<2m$. 
 $$s_{-1}-s_0=d(r_1(s_0), r_1(s_{-1}))\leq d(r_1(s_0), q_0)+d(q_0,q_{-1}')+d(q_{-1}', r_1(s_{-1}))$$
$$s_{-1}-s_0\leq7\delta+4+d(q_0, q_{-1}')+36\delta+20=d(q_0,q_{-1}')+43\delta+24$$
By Claim \ref{cl2}$(ii)$ $s_{-1}-s_0>m-14\delta$ so:
$$d(q_0, q_{-1}')\geq s_{-1}-s_0 -43\delta -24> m-57\delta-24\geq J_{\ref{Deep}}(42\delta+20)$$
Since $d(r_1(s_0),r_2(s_0))\leq \delta$,  $r_2(s_0)\in q_0C$ and $d(z_1,{\bf H_0})\leq 4\delta$ (by Lemma \ref{sm}$(i)$),  Lemma \ref{QC} implies $r_1(s_{-1})$ is within $6\delta$ of $q_0C$. By Claim \ref{cl2}$(i)$, $d(q_{-1}',r_1(s_{-1}))\leq 36\delta+20$ and so $q_{-1}'$ is within $42\delta +20$ of $q_0C$.

Since $d(q_0, q_{-1}')\geq J_{\ref{Deep}}(42\delta+20)$, Lemma \ref{Deep} implies that there is a geodesic from $q_0$ to $q_{-1}'$ with an initial vertical segment in ${\bf H_0}$ of length: $$M_1'={1\over 2} (d(q_0,q_{-1}')-(45\delta+20))\geq {1\over 2}(m-104\delta-44)$$
 In particular, $(c_0. c_{-1})_{q_0}$ is at least as large as this last number. Let 
$$L_{-2}={1\over 2}(104\delta+44)>L_{-1}$$ 
In either case: 
$$(c_0. c_{-1})_{q_0} \geq {m\over 2}- L_{-2}={(x_1. x_2)_\ast\over 4}- L_{-2}$$
$$e^{-(c_0. c_{-1})_{q_0}} \leq e^{-(x_1. x_2)_\ast\over 4} e^{L_{-2}}=k_1^{-{1\over 4}}e^{L_{-2}} (d_V(x_1,x_2))^{1\over 4}$$
By Lemma \ref{approx}:
$$d_V(c_0,c_{-1})\leq k_2e^{-(c_0. c_{-1})_\ast}\leq k_2e^{-d(\ast,q_0)} e^{-(c_0. c_{-1})_{q_0}} e^{26\delta+12}\leq k_2e^{-(c_0. c_{-1})_{q_0}} e^{26\delta+12}$$
Combining these last two inequalities and letting $M_{-1}=k_2k_1^{-{1\over 4}}e^{L_{-2}+26\delta+12}$:
$$d_V(c_0,c_{-1})\leq  (d_V(x_1,x_2))^{1\over 4}k_2k_1^{-{1\over 4}}e^{L_{-2}+26\delta+12}=M_{-1}(d_V(x_1,x_2))^{1\over 4}$$
This completes the proof of Lemma \ref{C3}.
\end{proof}

\begin{lemma}\label {d01} 
Suppose $s_0\geq m$ and $s_1\geq m$ then $(c_0,c_{-1})_\ast\geq m-9\delta -4$. 
\end{lemma}

\begin{proof} 
Consider a geodesic ray $t_{-1}$ from $\ast$ to $q_{-1}$ followed by the vertical geodesic in ${\bf H_{-1}}$ beginning at $q_{-1}$. This ray is geodesic  since $q_{-1}$ is a closest point of $q_{-1}C$ to $\ast$ and $t_{-1}$ converges to $c_{-1}$. See Figure 15.  Let $t_0$ be a geodesic ray from $\ast$ to $q_0$ followed by the vertical geodesic in ${\bf H_{0}}$ beginning at $q_0$, then $t_0$ converges to $c_0$. Since $d(q_0, r_2(s_0))\leq 6\delta +4$ and $r_2(s_0)\geq m$, 
$$d(q_0,\ast)\geq m-6\delta-4$$ 
Similarly,
$$ d(q_{-1},\ast)\geq m-6\delta -4$$

\vspace {.6in}
\vbox to 3in{\vspace {-2in} \hspace {-1.8in}
\includegraphics[scale=1]{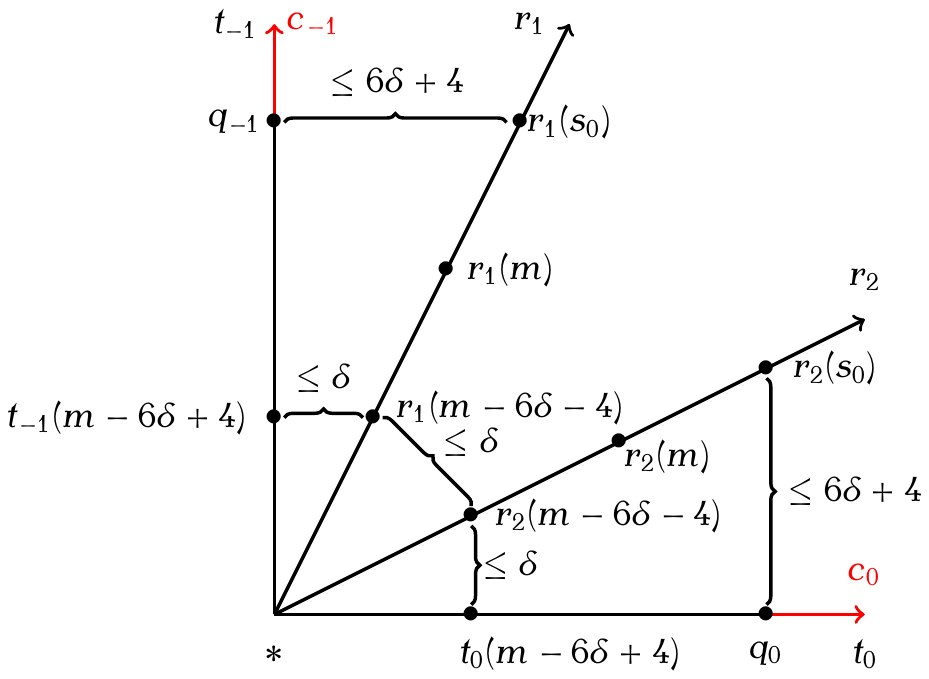}
\vss }
\vspace{-.4in}

\centerline{Figure 15}

\medskip

Considering a geodesic triangle $\triangle (\ast, q_0, r_2(s_0))$ (with one side on $t_0$, another on $r_2$ and the third of length $\leq 6\delta +4$) we have: 
$$d(t_0(m-6\delta -4), r_2(m-6\delta -4))\leq \delta$$ 
Similarly, 
$$d(t_{-1}(m-6\delta -4),r_1(m-6\delta -4))\leq \delta$$ 
Considering $\triangle (\ast, r_1(2m), r_2(2m))$ (with $d(r_1(2m),r_2(2m))\leq \delta$):
$$ d(r_1(m-6\delta -4), r_2(m-6\delta -4))\leq \delta$$
By the triangle inequality: 
$$d(t_{-1}(m-6\delta-4),t_0(m-6\delta-4))\leq 3\delta$$

Now assume that $m'=(c_0,c_{-1})_\ast<m-6\delta-4$. Let 
$$k=(m-6\delta -4)-m'$$ 
Let $l'$ be a geodesic line from $c_{-1}$ to $c_0$, so the internal points of the ideal geodesic triangle $\triangle (t_{-1}, t_0,l')$ are $t_{-1}(m')$, $t_0(m')$, and $ v$ for some vertex $v$ of $l'$. Then let $v_{-1}$ be the vertex of $l'$ (between $v$ and $c_{-1}$) such that $d(v_{-1}, v)=k$ (and so $d(t_{-1}(m-6\delta -4), v_{-1})\leq \delta$). Let $v_0$ be the vertex of $l'$ (between $v$ and $c_{0}$) such that $d(v_0, v)=k$ (and so $d(t_0(m-6\delta -4), v_0)\leq \delta$). See Figure 16.

\vspace {.6in}
\vbox to 3in{\vspace {-2in} \hspace {-.8in}
\includegraphics[scale=1]{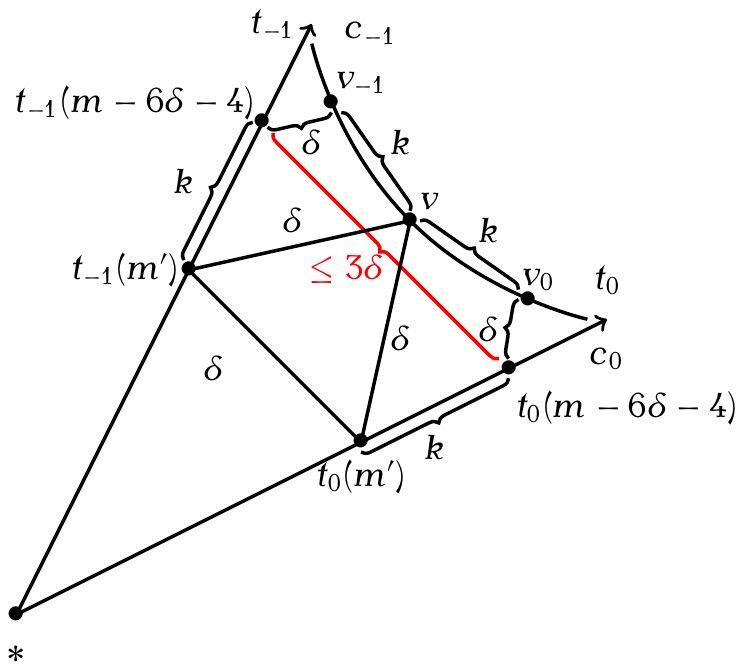}
\vss }
\vspace{-.4in}

\centerline{Figure 16}

\medskip

Then $2k=d(v_{-1},v_0)$ is less than or equal to the length of the path from $v_{-1}$ to $t_{-1}(m-6\delta-4)$ to $t_0(m-6\delta -4)$ to $v_0$. That means 
$$2k=d(v_{-1},v_0)\leq \delta +3\delta +\delta =5\delta$$ 
In particular, $k\leq {5\over 2}\delta< 3\delta$. Now 
$$3\delta>k=m-6\delta-4-m'=m-6\delta-4-(c_0.c_{-1})_\ast$$ 
implying $(c_0,c_{-1})_\ast>m-9\delta -4$.
\end{proof}
Let $M=max\{M_1,M_{-1}\}$.

By Lemmas \ref{C2} and \ref{C2.1}, if $s_0<m$ and $m>54\delta+24+J_{\ref{Deep}}(40\delta+24)$ then:
$$d_L(c_0,x_2)\leq d_L(c_0,c_1) +d_L(c_1,x_2)\leq 2Q\sqrt{d_V(x_1,x_2)} +(d_V(x_1,x_2))^{1\over 4}M$$
In our continuity argument, we may assume $d_V(x_1,x_2)\leq 1$. But then $\sqrt{d_V(x_1,x_2)} \leq (d_V(x_1,x_2))^{1\over 4}$ and:
$$d_L(c_0,x_2)\leq (2Q+M)(d_V(x_1,x_2))^{1\over 4}$$
Again by Lemmas \ref{C3} and \ref{C2.1}, if $s_0<m$ and $m>57\delta +24+J_{\ref{Deep}}(40\delta+24)$ then:
$$d_L(c_0,x_1)\leq d_L(c_0, c_{-1})+d_L(c_{-1},x_1)\leq 2Q\sqrt{d_V(x_1,x_2)} +(d_V(x_1,x_2))^{1\over 4}M$$
$$d_L(c_0,x_1)\leq (2Q+M)(d_V(x_1,x_2))^{1\over 4}$$
Combining; If $s_0<m$ and $m>57\delta +24+J_{\ref{Deep}}(40\delta+24)$ then:
$$(A)\ \ \ d_L(x_1,x_2)\leq d_L(x_1, c_0)+d_L(c_0, x_2)\leq 2(2Q+M)(d_V(x_1,x_2))^{1\over 4}$$

Two other cases must be considered. 
It may be that $s_0\geq m$ and $s_{-1}<m$ (and $m>57\delta +24+J_{\ref{Deep}}(40\delta+24)$). In this case, we simply observe that if the sets $\{c_0,c_1,\ldots\}$ and $\{c_{-1}, c_{-2},\ldots\}$ are interchanged (so that for $i\geq 0$, $c_i$ plays the role of $c_{-i-1}$)  then Lemmas \ref{C2} and \ref{C3} remain valid. In this way, equation $(A)$ (with $c_0$ replaced by $c_{-1}$) remains valid in this case. 

Finally, it may be that $s_0\geq m$ and $s_{-1}\geq m$. In this case, Lemma \ref{C1} implies that when $s_0\geq m$ then $d_L(c_0,x_2)\leq 2Q \sqrt{d_V(x_1,x_2)}$. Again interchanging the sets $\{c_0,c_1,\ldots\}$ and $\{c_{-1}, c_{-2},\ldots\}$ in Lemma \ref{C1} tells us that if $s_{-1}\geq m$ then $d_L(c_{-1},x_1)\leq 2Q\sqrt{d_V(x_1,x_2)}$. By Lemma \ref{d01} $(c_0.c_{-1})_\ast\geq m-9\delta -4={(x_1.x_2)_\ast\over 2}-9\delta-4$. Then $d_V(c_{-1}, c_0)=d_L(c_{-1}, c_0)$.
$$d_V(c_0,c_{-1}) \leq k_2 e^{-(c_0.c_{-1})_\ast} \leq k_2e^{9\delta+4} \sqrt {e^{-(x_1.x_2)_\ast}}\leq k_2 k_1^{-{1\over 2}}e^{9\delta+4} \sqrt{d_V(x_1,x_2)}$$
Combining and applying the triangle inequality:
$$d_L(x_1,x_2)\leq d_L(x_1,c_{-1}) +d_L(c_{-1},c_0)+d_L( c_0, x_2)\leq$$
$$(2Q+   k_2 k_1^{-{1\over 2}}e^{9\delta+4}   +2Q)\sqrt{d_V(x_1,x_2)}$$
Collecting terms:

$$(B)\ \ \ \ d_L(x_1,x_2)\leq (4Q+   k_2 k_1^{-{1\over 2}}e^{9\delta+4} )\sqrt{d_V(x_1,x_2)}$$
Let 
$$N=max\{4Q+k_2k_1^{-{1\over 2}}e^{9\delta+4},\ 2(2Q+M)\}$$
Then in all cases, $d_L(x_1,x_2)\leq N(d_V(x_1,x_2))^{1\over 4}$. 
Now given $\epsilon>0$ let $\delta'=({\epsilon\over N})^4$. If $x_1,x_2\in \partial X$ such that $d_V(x_1,x_2)<\delta'$, then $d_L(x_1,x_2)<\epsilon$. Of course we may assume that $\delta'<1$ 
(so that our assumption that $\sqrt {d_V(x_1,x_2)} <d_V(x_1,x_2)^{1\over 4}$ remains valid) must also have that $\delta'$ is small enough to ensure that $m={(x_1.x_2)_\ast\over 2}>57\delta+24+J_{\ref{Deep}}(40\delta+20)$ (so that Lemmas \ref{sm}-\ref{cl2} remain valid).  Note that $(x_1.x_2)_\ast >2(57\delta+24+J_{\ref{Deep}}(40\delta+20))$ if and only if $k_1e^{-(x_1.x_2)_\ast}<k_1e^{-2(57\delta+24+J_{\ref{Deep}}(40\delta+20))}$. As $k_1e^{-(x_1.x_2)_\ast}\leq d_V(x_1,x_2)$, we only need require that $d_V(x_1,x_2)<k_1e^{-2(57\delta+24+J_{\ref{Deep}}(40\delta+20))}$ (equivalently that $\delta'<k_1e^{-2(57\delta+24+J_{\ref{Deep}}(40\delta+20))}$).

The identity function from the compact metric space $(X,d_V)$  to the metric space $(X,d_L)$ is continuous and these metrics induce the same topology.
\end{proof}

\section{The Piecewise Visual Metric is Linearly Connected}\label{Proof}

By definition, $d_L$  agrees with $d_V$ on limit sets of the cosets of the vertex groups. All that remains in order to prove the main theorem of the paper is to combine the results of the previous sections to prove that the metric $d_L$ on $\partial X$ is linearly connected. 

\begin{proof} {\bf (Of Theorem \ref{main})}
Let $x_1\ne x_2$ be points in $\partial X$. Let $C(x_1,x_2)=\{\ldots, c_{-1},c_0,c_1\ldots\}$ be the set of cut points in $\partial X$ separating $x$ and $y$. (We consider the case that $C(x_1,x_2)$ is bi-infinite since the other cases are similar and less complicated.)
Let $g_iV_i$ be the coset of the vertex group $V_i$ of our decomposition of $G$ such that $\{c_i,c_{i+1}\}$ is a subset of the limit set of $X_i(\subset X)$ the cusped space for $g_iV_i$.  By Theorem \ref{sub}, there is a connected set $Q_i$ in $\partial X_i$ containing $c_i$ and $c_{i+1}$ such that $D_V(Q_i)$, the diameter of $Q_i$ under the metric $d_V$, is $\leq q_id_V(c_i,c_{i+1})$ for a constant $q_i$. Since $d_V$ and $d_L$ agree on the limit set of $X_i$, $D_V(Q_i)=D_L(Q_i)$. There are only finitely many vertex groups $V_i$ in our decomposition of $G$, and only finitely many distinct $q_i$ by Lemma \ref{LinCon}.  Let $q$ be the largest of the $q_i$ (over all cosets of vertex groups). The set $\cup_{i=-\infty}^\infty Q_i$ is connected and $x_1,x_2$ are limit points of this set. Hence $Q(x_1,x_2)=\{x_1,x_2\}\cup \cup_{i=-\infty}^\infty Q_i$ is a connected set.
Then 
$$D_L(Q(x_1,x_2))=\sum_{i=-\infty}^\infty D_L(Q_i)\leq q\sum_{i=-\infty}^\infty d_L(c_i,c_{i+1})=qd_L (x_i,x_{i+1})$$ 
\end{proof}
 
\section{The Doubling Question}\label{double}

\begin{definition} 
A metric on a space $(X,d)$ is $n$-doubling if every ball of radius $r$ can be covered by $n$ balls of radius ${r\over 2}$.
\end{definition}

\begin{proposition} \label{doub} 
(\cite {McS18}, Proposition 4.5) The boundary of a relatively hyperbolic group is doubling if and only if each peripheral subgroup is virtually nilpotent.
\end{proposition}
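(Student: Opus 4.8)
The plan is to reduce the proposition, through the geometry of the cusped space $X=X(G,{\bf P},S)$, to the classical fact that a finitely generated group with a word metric is a doubling metric space if and only if it has polynomial growth, which by Gromov's polynomial growth theorem (together with Bass's formula for the exact growth degree) is equivalent to virtual nilpotence. The first move is the standard description of when a \emph{visual} boundary of a Gromov hyperbolic space is doubling: $\partial X$ is doubling if and only if $X$ has \emph{bounded growth at some scale}, i.e.\ there exist $r,N$ so that every ball of radius $2r$ in $X$ is covered by $N$ balls of radius $r$ --- one direction coming from Bonk--Schramm-style quasi-isometric embeddability into some $\mathbb H^{n}$, whose boundary sphere is doubling, together with the fact that doubling is a quasisymmetry invariant (cf.~\cite{BS07}). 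Since $\partial X=\partial(G,\mathcal P)$ as topological spaces, and since doubling is insensitive to the choice of visual metric (different base points and parameters give snowflake-equivalent metrics, and snowflaking preserves doubling), it therefore suffices to decide when $X$ has bounded growth at some scale.

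Now $X=\Gamma(G,S)\cup\bigcup\mathcal H(\Gamma_{i,t})^{(1)}$. The thick part, a bounded neighbourhood of $\Gamma(G,S)$, has valence bounded by $|S|$, so balls of a fixed radius there contain only boundedly many separated points; thus the thick part never obstructs bounded growth at a scale (this is the same reason the boundary of an ordinary hyperbolic group is always doubling). So bounded growth at a scale for $X$ is equivalent to bounded growth at a scale for each combinatorial horoball $\mathcal H(\Gamma_P)$, $P\in\mathcal P$. Here I would compute directly from the horoball structure: a ball of radius $C$ about a level-$k$ vertex $(v,k)$ meets $\mathcal H(\Gamma_P)$, up to a multiplicative factor depending only on $C$, in the image of the Cayley-graph ball $B_{\Gamma_P}\big(v,2^{k+O(\log C)}\big)$, because from $(v,k)$ one climbs $O(\log C)$ levels, makes a single horizontal jump of $\Gamma_P$-length up to $2^{k+O(\log C)}$, and descends --- and by Lemma~\ref{GM3.10} these are essentially the only moves available inside a horoball. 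Consequently the covering number of a radius-$2C$ ball about $(v,k)$ by radius-$C$ balls is comparable to $\big|B_{\Gamma_P}(2^{k+O(\log C)})\big|\big/\big|B_{\Gamma_P}(2^{k-O(\log C)})\big|$, and this is bounded uniformly in $k$ and $v$ if and only if $\Gamma_P$ has polynomial growth. As $\mathcal P$ is finite, $X$ has bounded growth at a common scale precisely when every $\Gamma_P$ has polynomial growth, i.e.\ (Gromov) when every $P\in\mathcal P$ is virtually nilpotent. The easy half of the group-theoretic equivalence is elementary: an $N$-doubling group satisfies $|B(2R)|\le N|B(R)|$ by left-homogeneity, hence $|B(2^{j})|\le N^{j}|B(1)|$ and the growth is polynomial.

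On the boundary this phenomenon is localized at the parabolic points, and one can run the implication ``$\partial X$ doubling $\Rightarrow$ each $P$ virtually nilpotent'' directly there in the shadow language used elsewhere in the paper. If $\xi\in\partial X$ is the point fixed by $P$, then the estimates of Lemma~\ref{close} (entry points of geodesics into the horoball over $P$ are controlled) and Lemma~\ref{approx}/Lemma~\ref{QC} (Gromov products of geodesics entering at a common depth) turn a doubling estimate for $d_V$ on a small ball about $\xi$ into: balls of radius $2R$ in $\Gamma_P$ are covered by a bounded number of balls of radius $R$, uniformly in $R$. Hence $\Gamma_P$ is doubling, so $P$ has polynomial growth and (Gromov) is virtually nilpotent; the $G$-action propagates the argument to every parabolic point, and the conical points contribute nothing because the thick part has bounded valence.

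The main obstacle is pinning the horoball growth estimate (equivalently, the shadow dictionary at $\xi$) down with constants independent of the depth $k$ and of which of the finitely many peripheral horoballs is used: one needs that the entry point of a geodesic from $\ast$ into $g\mathcal H(\Gamma_P)$ lies a universally bounded distance from a canonical vertex (Lemma~\ref{close}), that two geodesics entering at depth $k$ with $\Gamma_P$-distance $D$ between their entry vertices have Gromov product $k+\max\{0,\log_2(D/2^{k})\}+O(1)$ (Lemmas~\ref{GM3.10} and~\ref{QC}, with convexity of horoballs, Lemma~\ref{geo}), and that the resulting comparison between $d_V$ near $\xi$ and the word metric on $\Gamma_P$ is genuinely a snowflake equivalence, so that doubling passes in both directions. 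On the group side the only non-elementary input is Gromov's theorem, which is exactly what upgrades ``$\Gamma_P$ has polynomial growth'' to ``$P$ is virtually nilpotent''.
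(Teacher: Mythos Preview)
The paper does not prove this proposition: it is stated as a citation of \cite{McS18}, Proposition~4.5, with no argument given. So there is no ``paper's own proof'' to compare against; your task was really to reproduce the argument from the cited reference, and your sketch follows essentially the same strategy as Mackay--Sisto (Bonk--Schramm embeddability for one direction, shadow estimates at parabolic points plus Gromov's theorem for the other).

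Two small comments on the write-up. First, you assert the equivalence ``$\partial X$ doubling $\Longleftrightarrow$ $X$ has bounded growth at some scale'' but only justify the direction coming from Bonk--Schramm; the converse is not a general fact about hyperbolic spaces. Your argument does not actually need it, since you handle ``doubling $\Rightarrow$ peripherals virtually nilpotent'' independently via shadows, but the exposition would be cleaner if you dropped the claimed equivalence and ran the two implications separately. Second, in the horoball estimate the exponent should be $2^{\,k+O(C)}$, not $2^{\,k+O(\log C)}$: from $(v,k)$ a geodesic of length $C$ descends roughly $C/2$ levels, jumps horizontally, and comes back, so it reaches $\Gamma_P$-distance of order $2^{\,k+C/2}$ (this is exactly what Lemma~\ref{GM3.10} gives). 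Since $C$ is a fixed scale this slip is harmless for the conclusion --- the relevant ratio is still $|B_{\Gamma_P}(\lambda R)|/|B_{\Gamma_P}(R)|$ for a fixed $\lambda$ depending on $C$ --- but the formula as written is incorrect.
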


\begin{question} 
If $(G,\mathcal P)$ satisfies the hypotheses of Theorem \ref{main} and each element of $\mathcal P$ is virtually nilpotent, then the visual metric $d_V$ is doubling by Proposition \ref{doub}. 
Is our piecewise visual metric $d_L$ doubling?
\end{question}

For $x_1$ and $x_2$ in $\partial (G,\mathcal P)$ we have shown: 
$d_V(x_1,x_2)<({\epsilon\over N})^4$, implies $d_L(x_1,x_2)<\epsilon$ (where $N$ is a large number) and certainly $d_V(x_1,x_2)\leq d_L(x_1,x_2)$. This implies:
$$B_V(a, ({\epsilon\over N})^4) \subset B_L(a,\epsilon)\subset B_V(a,\epsilon)\subset B_L(a,\epsilon^{1\over 4}N). $$

\bibliographystyle{amsalpha}
\bibliography{paper}{}

 \end{document}